\newcommand{\myitem}[1]{%
\item[#1]\protected@edef\@currentlabel{#1}%
}
\pgfplotsset{every axis/.append style={
    axis x line=middle,    
    axis y line=middle,    
    axis line style={<->}, 
    xlabel={$x$},          
    ylabel={$y$},          
    },
    cmhplot/.style={color=blue,mark=none,line width=1pt,<->},
    soldot/.style={color=blue,only marks,mark=*},
    holdot/.style={color=blue,fill=white,only marks,mark=*},
}
\tikzset{>=stealth}
\newtheoremstyle{Assump}%
  {3pt}
  {3pt}
  {\itshape}
  {}
  {\bfseries}
  {.}
  {.5em}
  {\thmname{#1} \thmnumber{#2} \thmnote{\normalfont#3}}
\newtheorem{theorem}{Theorem}[section]
\newtheorem{lem}[theorem]{Lemma}
\newtheorem{prop}[theorem]{Proposition}
\newtheorem{cor}[theorem]{Corollary}
\theoremstyle{definition}\newtheorem{example}[theorem]{Example}
\theoremstyle{definition}
\theoremstyle{definition}
\theoremstyle{definition}\newtheorem{criterion}[theorem]{Criterion}
\theoremstyle{definition}
\newenvironment{Condition}[1]
  {\innercustomthm}
  {\endinnercustomthm}
\numberwithin{equation}{section}
\newcommand{\N}{\mathbb{N}}
\newcommand{\R}{\mathbb{R}}
\newcommand{\D}{{\bf D}}											
\newcommand{\F}{\mathbb{F}}											
\newcommand*\diff{\mathop{}\!\mathrm{d}}								
\newcommand{\E}{\mathbb{E}}											
\newcommand{\Pro}{\mathbb{P}}										
\newcommand{\ind}{\operatorname{\mathbf{1}}}							
\newcommand{\id}{\operatorname{Id}}									
\renewcommand{\sup}[1]{\underset{#1}{\operatorname{sup}}\;}					
\renewcommand{\inf}[1]{\underset{#1}{\operatorname{inf}}\;}					
\renewcommand{\setminus}{\mathbin{\fgebackslash}}						
\newcommand{\union}[2]{\overset{#2}{\bigcup\limits_{#1}}\;}					
\renewcommand{\|}{\; : \, \;}											
\newcommand{\conv}[2]{\; \xrightarrow[ {#1}]{#2} \; }						
\newcommand{\dJ}{\diff_{\operatorname{\operatorname{J}_1}}}								
\newcommand{\dM}{\diff_{\operatorname{\operatorname{M}_1}}}							
    \def\@fnsymbol#1{\ensuremath{\ifcase#1\or \ddagger\or  \mathsection \or \dagger \or \mathparagraph\or \|\or **\or \dagger\dagger
        \or \ddagger\ddagger \else\@ctrerr\fi}}
\title{Relative compactness of Itô integrals on the $M_1$ Skorokhod space with identification of limits}
\author{}
\date{}
\author{Fabrice Wunderlich \thanks{Scuola Normale Superiore, Pisa, IT. \texttt{fabrice.wunderlich@maths.ox.ac.uk}} \thanks{Department of Statistics, London School of Economics, London, UK.}}
\begin{document}
\maketitle

\vspace{-20pt}

\begin{abstract}
We establish general results for weak relative compactness of sequences of Itô integrals with respect to Skorohod's functional $M_1$ topology, under general conditions. Moreover, we are able to explicitly characterise the form of the limit points of all convergent subsequences. This result closes a longstanding gap in the literature, where weak relative compactness had previously only been shown under the significantly more restrictive $J_1$ topology, or the $S$ topology which is too coarse to preserve continuity for most common functionals.
\end{abstract}

\section{Introduction} \label{sec:intro}

We consider the problem of weak relative compactness of sequences of stochastic Itô integrals with respect to Skorokhod's $M_1$ topology. Let $\D_{\R^d}[0,\infty)$ be the space of càdlàg paths $x:[0,\infty)\to \R^d$, and let $\dJ$, $\dM$ be the classical metrics inducing the $J_1$ and $M_1$ topology \cite{skorokhod, billingsley, whitt}, respectively. Given a sequence $\{X^n\}$ of semimartingales on filtered probability spaces $(\Omega^n,\mathcal{F}^n, \mathbbm{F}^n,\Pro^n)$, and $\{H^n\}$ a sequence of $\mathbbm{F}^n$-adapted càdlàg processes, we aim to identify general conditions that ensure the weak relative compactness of the sequence
\begin{align} \Big\{\int_0^\bullet \, H^n_{s-} \, \diff X^n_s\Big\}\label{intro:integral_sequence_in_question} \end{align}
on $(\D_{\R^d}[0,\infty),\dM)$, assuming $(H^n,X^n) \Rightarrow (H^0,X^0)$ on $(\D_{\R^d}[0,\infty),\dM) \times (\D_{\R^d}[0,\infty),\dM)$. Moreover, we are interested in explicitly characterising the form of limit points of any weakly convergent subsequences.  


\subsection{Classical approaches to weak continuity of Itô integrals}
Classical approaches, which pursued 'full' weak convergence of the sequence \eqref{intro:integral_sequence_in_question}, typically start by imposing a critical control on the behaviour of the integrators $X^n$ (see e.g. the UT condition \cite[p.112]{jakubowskimeminpages}, the UCV condition \cite[C2.2(i)]{kurtzprotter}, or GD in \cite{andreasfabrice_theorypaper}). Several fundamental examples such as \cite[§6b, p.~38]{shiryaev} and \cite[Ex.~3.17]{andreasfabrice_theorypaper} demonstrate that such control can generally not be omitted without risking the stochastic unboundedness of the integral processes, even when the individual processes converge uniformly. In the sequel, we will therefore assume such control, that is that the integrators $X^n$ decompose as
\[
	X^n = M^n + A^n,\quad M^n  \text{ local martingales},\quad  A^n \text{ finite variation processes},
	\]
	such that, for every $t>0$, we have 
	\begin{align}\label{eq:Mn_An_condition}\tag{GD}
		\lim_{R\rightarrow \infty}  \limsup_{n\rightarrow\infty } \, \mathbb{P}^n\bigl(\text{TV}_{[0,t]}(A^n)>R\bigr)=0 \quad \; \; \text{and}  \; \; \quad
		\limsup_{n\rightarrow \infty} \,  \mathbb{E}^n\bigl[\, |\Delta M^n_{t \land \tau^n_c}|\, \bigr] <\infty,
	\end{align}
	for all $c>0$, where $\tau^n_c:= \operatorname{inf}\{s >0: |M^n|^*_s\ge c \}$. Here, we have denoted $|Z|^*_t:= \operatorname{sup}_{0\le s\le t} |Z_s|$ the running supremum of a stochastic process $Z$ over the time interval $[0,t]$, $\text{TV}_{[0,t]}(Z)$ its total variation on $[0,t]$, and $\Delta Z_t:= Z_t - Z_{t-}$ the jump of $Z$ at time $t$.

If one seeks to obtain 'full' weak convergence of the sequence \eqref{intro:integral_sequence_in_question}, an additional assumption beyond \eqref{eq:Mn_An_condition} is required, acting on the interplay between the integrands and integrators beyond adaptedness and joint weak convergence (see e.g. \cite[(AVCI) \& Thm.~3.6]{andreasfabrice_theorypaper}). Originally introduced by \cite[Eq.~(6)]{jakubowski}, the condition rules out that---asymptotically---there is a significant increment of the integrands before one of the integrators. This prevents the approximating integrals from accruing a mass surplus or deficit that does not translate to the limiting integral. More formally, define a function $\hat{w}^T_\delta: \D_{\R^d}[0,\infty) \times \D_{\R^d}[0,\infty) \to \R_+$ of the largest consecutive increment within a $\delta$-period of time on $[0,T]$, namely
\begin{align}
	\hat{w}^T_{\delta}(x,y)  := \operatorname{sup}\!\left\{  |x^{(i)}(s)- x^{(i)}(t)| \wedge |y^{(i)}(t)-y^{(i)}(u)|  :   s < t  < u \le (s+\delta), \, 1\le i\le d    \right\}, \label{defi:AVCI_modulus_of_continuity}
\end{align}
where the supremum is restricted to $0\leq s, u\leq T$. In this definition we have applied the usual notation $a\wedge b:= \operatorname{min}\{a,b\}$, and we have denoted by $x^{(i)}$ the $i$-th coordinate of $x$. Then, the additional condition on the interplay can be stated as
	\begin{align} 
		\lim\limits_{\delta \downarrow 0} \,  \limsup\limits_{n\rightarrow \infty} \,  \Pro^n\bigl( \, \hat w_{\delta}^T(H^n, \, X^n) \; > \; \gamma \, \bigr) \; = \; 0. \tag{AVCI} \label{eq:oscillcond}
	\end{align}
In \cite[Prop.~3.8]{andreasfabrice_theorypaper} it has been showed that, whenever the limiting quantities $H^0$, $X^0$ almost surely have no common discontinuities, or if the pairs $(H^n,X^n)$ converge on the strong $J_1$ space $(\D_{\R^{2d}}[0,\infty),\dJ)$ opposed to on $(\D_{\R^{d}}[0,\infty),\dJ)\times (\D_{\R^{d}}[0,\infty),\dJ)$ (which---quintessentially---requires the integrands to eventually jump at the same time as the integrators), then \eqref{eq:oscillcond} is satisfied. In fact, the most influential seminal papers \cite{jakubowskimeminpages, kurtzprotter} for the $J_1$ toplogy built their weak convergence results for stochastic integrals around the latter sufficient criterion. We shall stress that it is significantly more intricate to show tightness (and thus, by extension via Prokhorov's theorem, weak relative compactness) of the pairs $(H^n,X^n)$ on the strong space $\D_{\R^{2d}}[0,\infty)$ than on the product space $(\D_{\R^{d}}[0,\infty))^2$ (cf.~\cite[Rmk.~3.10]{andreasfabrice_theorypaper}. While both criteria provide a simple lever in many interesting cases, beyond these instances it quickly becomes either unclear how to practically verify \eqref{eq:oscillcond} or the condition might simply not be satisfied. 
Example \ref{ex:simple_example_tightness_without_AVCI} suggests that, even if \eqref{eq:oscillcond} fails, one may still expect at least the weak relative compactness of the integrals.

\begin{example} \label{ex:simple_example_tightness_without_AVCI}
    Consider first the deterministic sequences defined by $x_n=\ind_{[1-1/n,\, \infty)}$, as well as $h_n=h_n^{'}:=\ind_{[0,\,1-2/n)}+3\ind_{[1-2/n,\, \infty)}$ if $n$ is even, and $h_n=h_n^{''}:=\ind_{[0,\,1+1/n)}+3\ind_{[1+1/n,\, \infty)}$ if $n$ is odd. Obviously, $h_n \to \ind_{[0,\, 1)}+3\ind_{[1,\,\infty)}=:h_0$, $x_n\to \ind_{[1,\,\infty)}=:x_0$ in the $J_1$ topology but due to the preceding jump of $h_n$ before the one of $x_n$ for even $n$, we have
    $$ \int_0^\bullet h_n(s-)\diff x_n(s)\; = \; \begin{cases} 3\ind_{[1-1/n,\, \infty)}, \quad & \text{if $n$ even} \\ \ind_{[1-1/n,\,\infty)}, \quad & \text{if $n$ odd}  \end{cases}$$
    and therefore we merely obtain relative compactness, with $J_1$ convergence of the subsequences 
    $$\int_0^\bullet h_{2n}(s-)\diff x_{2n}(s) \, \to \, 3\ind_{[1,\,\infty)} \, = \, \int_0^\bullet h_0(s-) \diff x_0(s) + \Delta h_0(1) \Delta x_0(1) \ind_{\{1\le \bullet\}}$$ as well as $$\int_0^\bullet h_{2n+1}(s-)\diff x_{2n+1}(s) \, \to \, \ind_{[1,\,\infty)} \, = \, \int_0^\bullet h_0(s-) \diff x_0(s).$$
    Extending the above example in a way that introduces stochasticity, consider now Bernoulli random variables $\xi_n$ and 
    $$ H^n \; := \; \xi_n h_n^{'} \; + \; (1-\xi_n)h_n^{''} \; \to \; h_0 \; =: \; H^0$$
    almost surely in $J_1$, while leaving the $X^n:=x_n \to x_0=: X^0$ unchanged as above. The $\xi_n$ trivially being tight, by Prokhorov's theorem there exists a subsequence $\{\xi_{n_k}\}$ of the latter converging in distribution to some Bernoulli $\xi$, and thus 
    \begin{align*} \int_0^\bullet H^{n_k}_{s-} \diff X^{n_k}_s \; &= \;\int_0^\bullet h_{n_k}^{''} \diff X^{n_k}_s \, + \, \xi_{n_k} \int_0^\bullet \Delta h^{'}_{n_k}(1-2/n_k)\, \ind_{[1-2/n_k,\, 1+1/n_k)}(s-) \diff X^{n_k}_s  \\[1ex]
    &= \; \ind_{[1-1/n_k,\, \infty)} \, + \, \xi_{n_k} \, \Delta h^{'}_{n_k}(1-2/n_k) \Delta x_{n_k}(1-1/n_k) \ind_{[1-1/n_k, \, \infty)} \\[1ex]
    &\Rightarrow \; \ind_{[1,\, \infty)} \, + \, \xi \, \Delta h_0(1) \Delta x_0(1) \ind_{[1,\, \infty)} \, = \, \int_0^\bullet H^0_{s-}\diff X^0_s \, + \, \xi \, \Delta H^0_1 \Delta X^0_1\ind_{[1,\, \infty)}.
    \end{align*}
\end{example}

\subsection{A gap in the literature for weak relative compactness}
Indeed, while a lack of \eqref{eq:oscillcond} generally jeopardises 'full' functional weak convergence, Kurtz \& Protter  \cite[pp.~1051-1054]{kurtzprotter} were able to show that, for $J_1$ convergent semimartingale integrands and integrators $(H^n,X^n)\Rightarrow (H^0,X^0)$ on $(\D_{\R^d}[0,\infty),\dJ)\times (\D_{\R^d}[0,\infty),\dJ)$ where the integrators $X^n$ have UCV (which, in fact, can be regarded as equivalent to \eqref{eq:Mn_An_condition} in our setting), the sequence $\{(X^n, \int_0^\bullet H^n_{s-} \diff X^n_s)\}$ is weakly relatively compact on $(\D_{\R^{2d}}[0,\infty), \dJ)$. Moreover, they identified the limit points of converging subsequences as \\[-3ex]
\begin{align} \Big(X^0,\, \int_0^\bullet H^0_{s-}\, \diff X^0_s \, + \, \sum_{\beta_i \le \bullet} \, \Delta H^0_{\beta_i} \Delta X^0_{\beta_i}\Big) \label{kurtz_protter_J1_J1} \\[-5ex] \notag
\end{align}
where $\{\beta_i\}$ is some subset of the random jump times of $(H^0,X^0)$. Clearly, the fact that we still achieve weak relative compactness in $J_1$ despite a potential absence of \eqref{eq:oscillcond} is immensely useful. Yet, it becomes more and more clear that the $J_1$ topology presents substantial rigidity for many modern modeling purposes as it requires both the jump times and jump sizes of a convergent sequence to approximately match those of the limit.

If we leave the realm of regularity offered by Skorokhod's $J_1$ and $M_1$ topologies, \cite{jakubowski2} introduced a significantly coarser 'ultraweak' topology, the so-called $S$ topology, which is closely linked to the conditions F1--F3 in \cite[Prop.~2.16]{andreasfabrice_theorypaper}. As shown by \cite{stricker}, the UT condition, popularised in \cite{jakubowskimeminpages}, implies tightness with respect to the $S$ topology (\cite[Thm.~4.1]{jakubowski2}). Therefore, since UT propagates from the integrators to the integral, one obtains $S$ tightness with the fairly minimal assumption of UT. As for UCV, the UT condition---in the configuration of this paper---is equivalent to \eqref{eq:Mn_An_condition}, and thus $S$ tightness carries over to our framework of \eqref{eq:Mn_An_condition} instead of UT. Although the $S$ space is not metrizeable, and  hence we cannot directly apply the Prokhorov theorem in order to deduce weak relative compactness from tightness, \cite[Thm.~3.4]{jakubowski2} showed that this very implication is nonetheless true for the S topology. While, in certain specific cases, $S$ tightness might thus turn out to be practical, we shall note that many useful continuous functionals in the classical Skorokhod spaces, such as, e.g., the running supremum or time evaluations, are generally \emph{nowhere} continuous on the $S$ space. This critically limits the number of potential use cases for weak limit theorems with respect to the $S$ topology.

In contrast, Skorokhod’s $M_1$ topology bridges the gap between the rigidity of the $J_1$ topology and the looseness of the $S$ topology. Indeed, it enables a flexible approximation of limiting jumps through combinations of staggered smaller jumps or continuous pieces, as long as these almost monotonically morph into the limiting discontinuity. For both an excellent introduction to and an advanced study of the $M_1$ topology we refer to Whitt's canonical monograph \cite{whitt}. At the same time, convergence in the $M_1$ topology is sufficiently robust, with many
natural functionals (such as, e.g., the running supremum and time coordinates evaluated at continuity times of the limit) preserving convergence. Moreover, compactness in the $M_1$ topology is very often easier to achieve, especially when monotonic components are present. For these reasons, the $M_1$ topology increasingly becomes a standard tool for the study of functional convergence on the space of càdlàg paths. It has been used for a wide range of applications, from problems in financial mathematics \cite{Becherer_M1}, queueing and network theory \cite{PangWhitt, Mikosch, ramanan_M1}, the analysis of physical systems \cite{Jung_Zhang, Melbourne_Varandas, Dembo_LiCheng}, questions related to the Bouchaud trap model \cite{arous_btm, croydonM1}, to the study of mean field interaction models  \cite{Fu_Horst, DIRT_SPA, mercy, Nadtochiy_Skholni2}.

\subsection{Overview of the main results}
In this article, we establish the weak relative compactness of Itô integrals on the $M_1$ Skorokhod space (or $J_1$ space if the integrators converge in $J_1$), whenever (1) the integrators satisfy \eqref{eq:Mn_An_condition}, (2) there is some essential regularity property of the limiting pair $(H^0,X^0)$ (Theorem \ref{thm:tightness_without_AVCI}). Notably, if only the integrands converge in $M_1$ and the integrators converge in $J_1$, then this regularity property is no longer required (Corollary \ref{cor:main_result_M1_J1_case}). The form of the limit points for convergent subsequences will be explicitly identified as the limiting stochastic integral $\int H^0_{s-}\diff X^0_s$ plus the sum of jump products of the integrand $H^0$ and integrator $X^0$, weighed according to random variables taking values in $[0,1]^d$. Furthermore, we will show that if, contrary to \eqref{eq:oscillcond}, asymptotically all increments of the integrators occur after those of the integrands, then we are in fact able to recover again 'full' weak convergence, again without any additional regularity. Here, in the correction term of the limit, the weights of all jump products are equal to one (Proposition \ref{cor:integral_conv_without_AVCI}).


\section{Extending relative compactness to $M_1$} \label{sec:main_results}
The results on weak relative compactness in the setting of integrands and integrators converging with respect to either the $J_1$ topology or the 
$S$ topology, as described in Section \ref{sec:intro}, leave a fundamental gap in the literature for analogue statements within the framework of the $M_1$ topology. This gap will be addressed in the sequel. Whenever we have $(H^n,X^n) \Rightarrow (H^0,X^0)$ on $(\D_{\R^d}[0,\infty),\tilde \rho) \times (\D_{\R^d}[0,\infty), \rho)$, with $\tilde\rho, \rho \in \{\text{$J_1$},\text{$M_1$}\}$, we will refer to this as the $\tilde\rho$--$\rho$ case.

\subsection{The $M_1$--$M_1$ case} \label{sec:main_results_M1_M1}
Under a certain regularity property imposed on the integrands, we will extend Kurtz \& Protter's result \eqref{kurtz_protter_J1_J1} on weak relative compactness in the $J_1$--$J_1$ case to our more general $M_1$--$M_1$ framework. The regularity required for this will consist in assuming the limiting processes to satisfy $H^0_{-} \Delta H^0 |\Delta X^0|\ge 0$. This crucial assumption can best be motivated by means of the simple deterministic Example \ref{ex:simple_example_failure_of_convergence_without_regularity_condition_tightness_without_AVCI}, highlighting how jumps in the limiting integrands, which entail an instant sign reversal, may generally jeopardise $M_1$ convergence. Therefore, in order to obtain 

\begin{example} \label{ex:simple_example_failure_of_convergence_without_regularity_condition_tightness_without_AVCI}
    Consider
    $$h_n:= 1 - 2\ind_{[1-2/n,\infty)}\quad \text{ and } \quad  x_n:= (1/2) \ind_{[1-3/n,\infty)} + (1/2) \ind_{[1-1/n,\infty)}$$
    and note that $h_n \to h_0:=1-2\ind_{[1,\infty)}$ in the $J_1$ topology while $x_n \to x_0:=\ind_{[1,\infty)}$ in $M_1$. The integral process $\int_0^\bullet h_n(s-) \diff x_n(s)= (1/2) \ind_{[1-3/n,\, 1-1/n)}$ is evidently not relatively compact in the $M_1$ topology (as illustrated in Figure \ref{fig:integral_J1_M1_without_AVCI_fail}).
    \begin{figure}[h]
    \hspace{3cm}
	\begin{tikzpicture}[scale=0.8]
		\begin{axis}[
			xmin=0,xmax=5.2,
			ymin=0,ymax=0.6,
			xlabel = {$t$},
			ylabel = {{\color{blue} $\int_0^t h_n(s-)\diff x_n(s)$}},
			ytick = {0, 0.5},
			xtick = {0,3,4,5},
			xticklabels={$0$,$1 - \frac{3}{n}$,$1-\frac{1}{n}$,$1$},    axis x line=middle,
			axis y line=middle,
			every axis x label/.style={at={(ticklabel* cs:1.03)}, anchor=west},
			every axis y label/.style={at={(ticklabel* cs:1.03)},anchor=south},
			]
			\addplot[cmhplot,-,domain=0:3]{0};
			\addplot[cmhplot,-,domain=3:4]{0.5};
			\addplot[cmhplot,-,domain=4:5]{0};
			\addplot[holdot, fill=white]coordinates{(3,0)};
			\addplot[holdot, fill=white]coordinates{(4,0.5)};
			\addplot[soldot]coordinates{(3,0.5)};
			\addplot[soldot]coordinates{(4,0)};
		\end{axis}
	\end{tikzpicture}
		\caption{Loss of relative compactness for $J_1$--$M_1$ integrals due to a change of sign of the integrands.}
		\label{fig:integral_J1_M1_without_AVCI_fail}
	\end{figure}
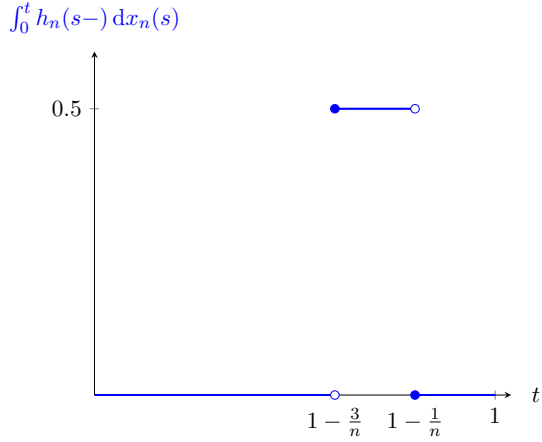
\end{example}

In Condition \ref{ass:R1}, we formalise the critical regularity needed for extension to the $M_1$--$M_1$ case.
\begin{Condition}{R1} \label{ass:R1}
    For the limiting processes $H^0,X^0$ it holds coordinatewise that for any $s>0$
    $$ H^0_{s-} \Delta H^0_s |\Delta X^0_s| \; \ge \; 0 \qquad \text{$\Pro^0$-a.s.\,.}$$
\end{Condition}
\begin{Condition}{R2} \label{ass:R2}
    For any strictly decreasing null sequence $\{a_k\} \subseteq (0,\infty)$, it holds that for all $T>0$, $\varepsilon>0$
    \begin{align*}
        \Pro^0 \Big( \operatorname{sup}_{l \ge k}  \sum_{s\le T} |\Delta H^0_s \Delta X^0_s|  \ind_{\{a_l \, < \,|\Delta H^0_s|\, \le \, a_k\}} \, > \, \varepsilon\Big) \; \conv{k\to \infty} \; \; 0.
    \end{align*}
\end{Condition}

We shall effectively view Condition \ref{ass:R2} as a means to warrant existence of the process of summed up jump products $\sum_{s\le \bullet} \Delta H^0_s \Delta X^0_s$, which will serve as the fundamental correction term in this analysis. A simple criterion for Condition \ref{ass:R2} to be satisfied is the following.

\begin{criterion}[A simple criterion for Condition \ref{ass:R2}] \label{rem:criterion_for_ass_R2}
    If $H^0$ has square-integrable jumps, e.g. if it is a semimartingale and therefore has existing quadratic variation, then a simple application of the Cauchy-Schwarz inequality shows that the processes 
    $$\sum_{s\le \bullet} |\Delta H^0_s \Delta X^0_s| \ind_{\{|\Delta H^0_s|\, > \, a_k\}}, \qquad k\ge 1,$$ 
    almost surely form a Cauchy sequence with respect to the uniform norm on compacts. Hence, Condition \ref{ass:R2} is satisfied in this case.
\end{criterion}

Let $\{a_k\} \subseteq (0,\infty)$ be a strictly decreasing null sequence to be specified at the beginning of Section \ref{sec:proofs}. For a càdlàg stochastic process $\tilde{H}^0$ on a probability space $(\tilde{\Omega}^0, \tilde{\mathcal{F}}^0, \tilde{\Pro}^0)$ equipped with the natural filtration of $\tilde{H}^0$, let $\tilde{\tau}^{0,k}_j$ be the stopping time indicating the $j$-th jump of $\tilde{H}^0$ of size larger than $a_k$, i.e.
\begin{equation}
    \tilde{\tau}^{0,k}_j \; := \; \inf{}\{s>\tilde{\tau}^{0,k}_{j-1} \,: \,   |\Delta \tilde{H}^0_s|> a_k \}, \quad j\ge 1,\label{eq:def_limiting_stopping_times_tightness_without_AVCI}
\end{equation}
as well as $\tilde \sigma^{0,k}_j$ the stopping time indicating the $j$-th jump of $\tilde H^0$ of size larger than $a_k$ yet smaller or equal to $a_{k-1}$, i.e.
\begin{equation}
    \tilde \sigma^{0,k}_j \; := \; \inf{}\{s>\tilde \sigma^{0,k}_{j-1} \,: \,   a_k < |\Delta H^0_s| \le a_{k-1} \}, \quad j\ge 1. \label{eq:def_limiting_stopping_times_intervals_tightness_without_AVCI}
\end{equation} 
Clearly, these stopping times almost surely exhaust all jumps of the process $\tilde{H}^0$, that is
$$ \{(s,\omega) \in [0,\infty) \times \Omega \, : \, \Delta \tilde{H}^0_s(\omega)\neq 0\} \cap ([0,\infty)\times A) \; \subseteq \; \union{k,j\ge 1}{} [\tilde{\tau}^{0,k}_j]\; = \; \union{k,j\ge 1}{} [\tilde{\sigma}^{0,k}_j] $$
for some $A\in \tilde{\mathcal{F}}^0$ with $\tilde{\Pro}^0(A)=1$. Denote the set of distinct stopping times $\tilde{\sigma}^{0,k}_j$ by $\mathcal{T}(\tilde{H}^0)$.

\begin{theorem} \label{thm:tightness_without_AVCI}
    Let $\{X^n\}$ be a sequence of $d$-dimensional semimartingales with good decompositions \eqref{eq:Mn_An_condition} on filtered probability spaces $(\Omega^n, \mathcal{F}^n, \F^n, \Pro^n)$. Consider a sequence of adapted $d$-dimensional càdlàg processes $\{H^n\}$ on the same filtered probability space such that $(H^n,X^n) \Rightarrow (H^0,X^0)$ on $(\D_{\R^d}[0,\infty), \dM)^2$ and assume Conditions \ref{ass:R1} and \ref{ass:R2}.\\
    Then, there is a subsequence $\{H^{n_m},X^{n_m}\}_{m\ge 1}$, a probability space $(\tilde{\Omega}^0, \tilde{\mathcal{F}}^0,\tilde{\Pro}^0)$ as well as stochastic processes $\tilde H^0, \tilde X^0: [0,\infty) \times \tilde{\Omega}^0 \to \D_{\R^d}[0,\infty)$ such that $\mathcal{L}(H^0,X^0)= \mathcal{L}(\tilde H^0, \tilde X^0)$, $\tilde X^0$ is a semimartingale in the filtration generated by the pair $(\tilde H^0, \tilde X^0)$ and 
        \begin{align} \Big(H^{n_m},\,X^{n_m}, \,\int_0^\bullet \, H^{n_m}_{s-} \; \diff X^{n_m}_s \Big) \; \Rightarrow \; \Big(  \tilde{H}^0,\,\tilde X^0, \,\int_0^\bullet \, \tilde H^0_{s-} \; \diff \tilde X^0_s \; + \; \sum_{\sigma \in \mathcal{T}(\tilde H^0)} \,  \tilde \xi^0_{\sigma} \Delta \tilde H^0_{\sigma} \, \Delta \tilde X^0_{\sigma} \, \ind_{[\sigma,\infty) }\Big) \label{eq:tightness_int_conv}
    \end{align}
    on $(\D_{\R^{d}}[0,\infty), \dM)^3$ as $m\to \infty$, where $0\le \tilde \xi^0_\sigma \le 1$ coordinatewise and the $\tilde \xi^0_\sigma$ are random variables on $(\tilde{\Omega}^0, \tilde{\mathcal{F}}^0,\tilde{\Pro}^0)$. In fact, $\tilde X^0$ is a semimartingale with respect to the filtration generated by the tuple $(\tilde H^0, \tilde X^0, \{\tilde \xi^0_\sigma \ind_{[\sigma , \infty)}\}_{\sigma \in \mathcal{T}(\tilde H^0)})$.
\end{theorem}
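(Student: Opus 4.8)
The plan is to argue along a subsequence extracted via a tightness/Skorokhod-representation scheme, reducing the $M_1$--$M_1$ case to a controlled perturbation of the $J_1$--$J_1$ result \eqref{kurtz_protter_J1_J1}. First I would introduce, for each threshold $a_k$, the "big-jump decomposition" of the integrands: write $H^n = H^{n,k} + R^{n,k}$ where $H^{n,k}$ isolates the (asymptotically finitely many, by \eqref{eq:Mn_An_condition} combined with the convergence $(H^n,X^n)\Rightarrow(H^0,X^0)$) jumps of $H^n$ that are "large" in the sense of approximating a limiting jump of size $>a_k$, and $R^{n,k}$ is the remainder. Near each such large jump, the $M_1$ convergence of $H^n$ lets me parametrise the integrand by its Whitt parametric representation, so the increment of $H^n$ over the relevant shrinking window is almost monotone along a coordinate; Condition \ref{ass:R1} guarantees the sign of this increment is compatible with the sign of $H^0_{s-}$, which is exactly what rules out the pathology of Example \ref{ex:simple_example_failure_of_convergence_without_regularity_condition_tightness_without_AVCI} and keeps the partial integrals from oscillating. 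On that window the integral $\int H^n_{s-}\diff X^n_s$ contributes, up to an $o_{\Pro}(1)$ error, a term of the form $\xi^{n}\,\Delta H^{0}\,\Delta X^{0}\,\ind_{[\sigma,\infty)}$ with $\xi^n \in [0,1]^d$ — the weight records how much of the integrator's mass at the jump is accrued after the integrand has (partially) jumped.

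Next I would handle the contribution of $R^{n,k}$. Here \eqref{eq:Mn_An_condition} is the workhorse: outside the large jumps, the integrands are uniformly controlled and do not produce isolated increments before the integrator, so the corresponding integral piece is $M_1$-close to $\int R^{0,k}_{s-}\diff X^0_s$ up to an error that is uniformly small once $k$ is large, by Condition \ref{ass:R2} (this is precisely where \ref{ass:R2} is needed: it makes $\sum_{s\le\bullet}\Delta H^0_s\Delta X^0_s$ well-defined and makes the tail of small-jump corrections negligible, uniformly along the subsequence). The martingale part of $X^n$ is treated with a Lenglart/Doob argument using the second half of \eqref{eq:Mn_An_condition} (uniform integrability of $|\Delta M^n_{t\wedge\tau^n_c}|$) after localising at $\tau^n_c$; the finite-variation part $A^n$ is handled pathwise using the first half of \eqref{eq:Mn_An_condition}.

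Having these two estimates, I would assemble the joint law of $(H^n, X^n, \int H^n_{s-}\diff X^n_s, \{\xi^{n}_\sigma\ind_{[\sigma,\infty)}\}_{\sigma})$, note the $\xi^n$ are trivially tight in $[0,1]^d$ and the triple $(H^n,X^n,\int\cdots)$ is $M_1$-tight (the integral being a sum of an $M_1$-convergent "bulk" term, monotone correction terms of bounded total variation, and vanishing remainders — monotone/bounded-variation pieces are exactly where $M_1$-compactness is easy), and pass to a subsequence converging jointly. Invoking the Skorokhod representation theorem on this joint limit gives the probability space $(\tilde\Omega^0,\tilde{\mathcal F}^0,\tilde\Pro^0)$ and processes $(\tilde H^0,\tilde X^0, \{\tilde\xi^0_\sigma\})$ with the asserted law; identifying the limiting integral as $\int \tilde H^0_{s-}\diff\tilde X^0_s + \sum_{\sigma\in\mathcal T(\tilde H^0)} \tilde\xi^0_\sigma \Delta\tilde H^0_\sigma\Delta\tilde X^0_\sigma\ind_{[\sigma,\infty)}$ follows by combining the per-jump and remainder estimates and letting $k\to\infty$. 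The semimartingale property of $\tilde X^0$ in the enlarged filtration is inherited because, along the representation sequence, $X^{n_m}$ has good decomposition \eqref{eq:Mn_An_condition} and the decompositions pass to the limit (the $A^n$ stay of tight total variation, the $M^n$ give a limiting local martingale after the usual Jakubowski–Mémin–Pagès / Kurtz–Protter closure argument), and the added coordinates $\tilde\xi^0_\sigma\ind_{[\sigma,\infty)}$ are built to be adapted by construction.

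The main obstacle I anticipate is the per-jump analysis under pure $M_1$ convergence: unlike in $J_1$, a single limiting jump of $H^0$ may be approximated by a complicated near-monotone cascade of small jumps and continuous stretches of $H^n$, interleaved in time with the (also possibly cascading) approximation of $\Delta X^0$. Making precise that the net integral contribution over such a window converges to a single scalar multiple $\xi\in[0,1]^d$ of $\Delta H^0\Delta X^0$ — and that the cross terms between the $H^n$-cascade and the $X^n$-cascade collapse correctly — is delicate; it requires working with the parametric representations of both $H^n$ and $X^n$ simultaneously, using Condition \ref{ass:R1} to fix orientations, and carefully bounding the "off-diagonal" mass that the two cascades exchange. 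This is the technical heart of the proof and is what genuinely goes beyond the $J_1$ machinery of Kurtz \& Protter.
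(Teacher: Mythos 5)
Your high-level architecture matches the paper's: split the integrand into a ``large-increment'' part and a remainder, feed the remainder through the known convergence machinery, analyse each large increment as an almost-monotone window producing a correction $\xi\,\Delta H^0\,\Delta X^0$ with $\xi\in[0,1]^d$, use Condition~\ref{ass:R1} to keep the two pieces sign-compatible so that $M_1$-addition is continuous, use Condition~\ref{ass:R2} for the tail of small jumps, and use Lenglart plus the two halves of \eqref{eq:Mn_An_condition} for the martingale and finite-variation parts. That is indeed the proof's skeleton.

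The gap is exactly in the step you yourself flag as ``the technical heart'': you propose a \emph{big-jump decomposition} $H^n = H^{n,k}+R^{n,k}$ that ``isolates the \dots\ jumps of $H^n$'' approximating a limiting jump of size $>a_k$. Under pure $M_1$ convergence this decomposition is not available: a jump of $H^0$ can be approximated by a \emph{continuous} (or many-small-jump) monotone ramp of $H^n$, so $H^n$ may have no jump at all to isolate, and the asserted ``asymptotically finitely many large jumps of $H^n$'' is false in general. What works, and is what the paper does, is to define for each $(k,\ell)$ a sequence of stopping times $\tau^{n,k,\ell}_j$ that detect \emph{increments of $H^n$ exceeding $a_k$ over a window of length $|\Theta_\ell|$}, together with grid-rounded endpoints $\rho^{n,k,\ell}_j=\lceil\tau^{n,k,\ell}_j\rceil_{\Theta_\ell}$ lying in a nested deterministic mesh $\Theta_\ell$ avoiding fixed discontinuity times; the correction process $\tilde H^{n,k,\ell}$ then records the drift of $H^n$ from its pre-window value over $[\tau_j,\rho_j)$, see \eqref{eq:definition_correction_processes}. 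The windows, not any jumps, are the objects that are asymptotically finite in number (via the number of $a_k$-increments, which is $M_1$-tight), and the remainder $H^n-\tilde H^{n,k,\ell}$ is shown to satisfy an \eqref{eq:oscillcond}-type bound (Lemma~\ref{lem:AVCI_replacement_for_corrected_integrands_without_AVCI}) precisely because its $a_k$-increments have been truncated to the grid, which is what lets one invoke the full convergence theorem for $\int(H^n-\tilde H^{n,k,\ell})_{s-}\,\diff X^n_s$. Your plan, as written, has no replacement for this: once you cannot literally ``isolate jumps,'' you need some window-based increment-detection device plus a deterministic grid whose points are almost sure continuity times (needed for finite-dimensional identification of limits), and that construction is the missing ingredient. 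The parametric-representation idea you raise is a plausible alternative tool to the paper's $w'$-modulus Lemmas~\ref{lem:approx_of_M1_by monotone_pieces_X}--\ref{lem:approx_of_M1_by monotone_pieces_H} for the within-window monotone approximation, but it does not by itself supply the stopping-time/grid scaffold that the decomposition must hang on.
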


    We shall point out that the only quantities on the right-hand side of \eqref{eq:tightness_int_conv} which, in terms of their law, may differ depending on the specific subsequence $\{ (H^{n_m},X^{n_m})\}$, are the random variables $\tilde \xi^0_\sigma$. For an intuition on the nature of these sets, we refer back to Example \ref{ex:simple_example_tightness_without_AVCI}.

Figure \ref{fig:overview_tightness_results} displays a concise classification of the tightness result in Theorem \ref{thm:tightness_without_AVCI}.

\begin{figure}[!hbt]  
\centering 
\begin{tabular}{l c c c c} 
\hline\hline   
 &$J_1$--$J_1$ & \color{blue}{$M_1$--$J_1$} & \color{blue}{$M_1$--$M_1$}  &$S$--$S$  
\\ [0.5ex]  
\hline   
Restrictiveness of topology &high  
& \color{blue}{lower} & \color{blue}{lower}  &low \\[1ex] 
Relative compactness/topology &\checkmark/$J_1$  
& \color{blue}{\checkmark/$J_1$} & \color{blue}{\checkmark/$M_1$} &\checkmark/$S$  \\[1ex] 
Identification of limit &\checkmark  
& \color{blue}{\checkmark}& \color{blue}{\checkmark} &\ding{55}  \\[1ex] 
Continuity of usual functionals \qquad \qquad  &\checkmark  
& \color{blue}{\checkmark} & \color{blue}{\checkmark} &\ding{55} \\[1ex] 
Additional regularity required &\ding{55} &{ \color{blue}\ding{55}}  &{\footnotesize \color{blue}{$H^0_{-}\Delta H^0 |\Delta X^0| \ge 0$}}  & \ding{55} \\ 
\hline 
\end{tabular} 
\caption{Overview over functional tightness results for stochastic integrals with respect to varying integrand-integrator convergence. The blue columns display the main results given in Section \ref{sec:main_results}.}
\label{fig:overview_tightness_results}
\end{figure}

The following proposition shows that entirely ruling out consecutive increments of the integrators before the integrands asymptotically yields again full weak convergence of the stochastic integrals, yet with a fully-fledged correction term. In terms of the limit of suitable subsequences of integrals, omitting \eqref{eq:oscillcond} therefore may well be comprehended as ending up on a random spectrum with respect to the presence of the correction term in \eqref{eq:int_conv_without_AVCI}.
\begin{prop} \label{cor:integral_conv_without_AVCI} 
    Under the assumptions of Theorem \ref{thm:tightness_without_AVCI}, suppose that asymptotically there are almost surely no consecutive large increments of the $X^n$ before the $H^n$, i.e.,
    \begin{align} \lim\limits_{\delta \downarrow 0} \, \limsup\limits_{n\to \infty} \; \Pro^n \big( \hat{w}^T_\delta(X^n,H^n_-) \, > \, a\big) \; = \; 0 \label{eq:cond_integral_conv_without_AVCI}
    \end{align}
    for all $a>0$ and $T>0$, where $\hat{w}^T_\delta$ is defined in \eqref{defi:AVCI_modulus_of_continuity}. Then, $X^0$ is a semimartingale in the natural filtration generated by the pair $(H^0,X^0)$ and it holds
    \begin{align} \Big( H^n, \,X^n , \, \int_0^\bullet \, H^n_{s-} \; \diff X^n_s\Big) \; \; \Rightarrow \; \; \Big( X^0, \,H^0,\, \int_0^\bullet \, H^0_{s-} \; \diff X^0_s \; + \; \sum_{s\le \bullet} \, \Delta H^0_s \Delta X^0_s \Big) \label{eq:int_conv_without_AVCI}
    \end{align}
    on $(\D_{\R^{d}}[0,\infty), \dM)^3$.
\end{prop}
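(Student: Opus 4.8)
The plan is to derive the statement from Theorem \ref{thm:tightness_without_AVCI} by upgrading the weights. Concretely, I would show that the extra hypothesis \eqref{eq:cond_integral_conv_without_AVCI} forces, coordinatewise and $\tilde\Pro^0$-almost surely, $\tilde\xi^0_\sigma\equiv 1$ for every $\sigma\in\mathcal{T}(\tilde H^0)$. Granting this, the correction term in \eqref{eq:tightness_int_conv} collapses to $\sum_{\sigma\in\mathcal{T}(\tilde H^0)}\Delta\tilde H^0_\sigma\Delta\tilde X^0_\sigma\ind_{[\sigma,\infty)}$, which coincides a.s.\ with $\sum_{s\le\bullet}\Delta\tilde H^0_s\Delta\tilde X^0_s$ since $\mathcal{T}(\tilde H^0)$ exhausts the jumps of $\tilde H^0$ while $\Delta\tilde H^0_s\Delta\tilde X^0_s=0$ off the jump set of $\tilde H^0$ (the sum being well-defined by Condition \ref{ass:R2}). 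As $\mathcal{L}(H^0,X^0)=\mathcal{L}(\tilde H^0,\tilde X^0)$, the law of the subsequential limit in \eqref{eq:tightness_int_conv} is then exactly that of the right-hand side of \eqref{eq:int_conv_without_AVCI}, and in particular it no longer depends on the chosen subsequence; moreover the semimartingale property of $X^0$ in the filtration generated by $(H^0,X^0)$ is already part of the conclusion of Theorem \ref{thm:tightness_without_AVCI} and carries over to $(H^0,X^0)$ because being a semimartingale in one's own generated filtration is a distributional property. Since the hypotheses of Theorem \ref{thm:tightness_without_AVCI} together with \eqref{eq:cond_integral_conv_without_AVCI} are stable under passage to subsequences, the full triple $\{(H^n,X^n,\int_0^\bullet H^n_{s-}\,\diff X^n_s)\}$ is relatively compact on $(\D_{\R^d}[0,\infty),\dM)^3$ and, running the $\tilde\xi^0_\sigma\equiv 1$ argument along each convergent sub-subsequence, all of its subsequential weak limits share this single law; the subsequence principle then upgrades this to the full weak convergence \eqref{eq:int_conv_without_AVCI}.

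The heart of the argument is therefore the identity $\tilde\xi^0_\sigma=1$, which I would establish by unpacking how the weights $\tilde\xi^0_\sigma$ are produced inside the proof of Theorem \ref{thm:tightness_without_AVCI} in Section \ref{sec:proofs}. There, for a fixed jump time $\sigma=\tilde\sigma^{0,k}_j\in\mathcal{T}(\tilde H^0)$, the weight $\tilde\xi^0_\sigma$ arises as a sub-subsequential limit of a ratio measuring, over the short windows on which $X^{n_m}$ performs the transition that approximates the jump $\Delta X^0_\sigma$, the fraction of the integrand increment $\Delta H^0_\sigma$ already realised by the time the bulk of that transition is complete; equivalently, $1-\tilde\xi^0_\sigma$ records the portion of $\Delta H^0_\sigma$ realised strictly after the bulk of $\Delta X^0_\sigma$, which is exactly the phenomenon behind the odd-index subsequence in Example \ref{ex:simple_example_tightness_without_AVCI}. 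Now \eqref{eq:cond_integral_conv_without_AVCI} asserts precisely that, with probability tending to one as $\delta\downarrow 0$, there is no triple $s<t<u\le s+\delta$ at which $X^n$ first accrues a non-negligible increment on $[s,t]$ and then $H^n_-$ accrues a non-negligible increment on $[t,u]$; i.e.\ asymptotically the integrand completes its transition, up to arbitrarily small error, no later than the integrator. Feeding this into the ratio above forces the leftover fraction $1-\tilde\xi^0_\sigma$ to vanish, so $\tilde\xi^0_\sigma=1$.

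The main obstacle is making this last implication rigorous at the granularity of the internal estimates behind Theorem \ref{thm:tightness_without_AVCI}: one has to fix a threshold quantifying ``the bulk of the integrator's jump'' in terms of the oscillation of $X^{n_m}$ over the windows supplied by the $M_1$ parametric-representation and compactness machinery, show that the complementary increment of $H^{n_m}_-$ on the tail of those windows is dominated by $\hat{w}^T_\delta(X^{n_m},H^{n_m}_-)$, and then let $\delta\downarrow 0$ uniformly over the relevant jump times --- of which, by Condition \ref{ass:R2}, there are almost surely only finitely many on each $[0,T]$. A secondary point to handle with care is the coordinatewise bookkeeping, since the windows, the threshold and \eqref{eq:cond_integral_conv_without_AVCI} are all coordinatewise, together with the observation that the small, essentially continuous excursions of $H^{n_m}$ away from its large jumps contribute nothing further to the correction; for this one reuses the $M_1$ and Condition \ref{ass:R1} arguments already carried out in the proof of Theorem \ref{thm:tightness_without_AVCI} rather than repeating them.
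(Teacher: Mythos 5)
Your overall strategy coincides with the paper's: reduce everything to showing that, under \eqref{eq:cond_integral_conv_without_AVCI}, the weights satisfy $\tilde\xi^0_\sigma\equiv 1$, observe that the resulting limit law no longer depends on the subsequence, and upgrade to full weak convergence by the sub-subsequence principle; the rewriting of the correction term as $\sum_{s\le\bullet}\Delta H^0_s\Delta X^0_s$ and the remark that the semimartingale property of $X^0$ in its pair-generated filtration is distributional are also fine and consistent with how the paper concludes. (Minor slip: the a.s.\ finiteness of the relevant jump times on $[0,T]$ for fixed $k$ is just the c\`adl\`ag property of $\tilde H^0$ for jumps exceeding $a_k$; Condition \ref{ass:R2} is what controls the contribution of the small jumps, not the finiteness.)

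The genuine gap is the central claim itself. You describe $\tilde\xi^0_\sigma$ as a limit of a ``ratio'' of realised increments and assert that \eqref{eq:cond_integral_conv_without_AVCI} forces the leftover fraction to vanish, but this is not how the weights are produced: they are weak limits (through Lemmas \ref{lem:properties_of_weighing_limit_terms}--\ref{lem:conv_weighing_quant_remainder_integrals_3}) of the Stieltjes integrals $Y^{j,n,k,\ell}=\int \zeta^{j,n,k,\ell}_{s-}\,\diff\xi^{j,n,k,\ell}_s$ from \eqref{eq:defi_scaling_terms_Y_n}, built from the monotone proxies of Lemma \ref{lem:approx_of_M1_by monotone_pieces_X}, and whether these integrals equal $1$ depends on \emph{which} admissible proxies one takes --- this is exactly where your sketch stops and where the actual work lies. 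The paper's proof makes the implication rigorous by (i) choosing the interpolations asymmetrically, namely \eqref{eq:interpolation_2_monotone_piece} for $X^n$ (so that $\xi^{j,n,k,\ell}$ stays at $0$ until $X^n$ has genuinely moved by more than $\gamma$ from its value at $\lfloor\tau^{n,k,\ell}_j\rfloor$) and \eqref{eq:interpolation_1_monotone_piece} for $H^n$ (so that $\zeta^{j,n,k,\ell}$ saturates at $1$ as soon as $H^n$ is within $\gamma/2$ of its terminal value on the window); (ii) introducing the event $\Gamma_{n,k,\ell}(\gamma)$, whose high probability comes from applying \eqref{eq:cond_integral_conv_without_AVCI} on the windows $[\lfloor\tau^{n,k,\ell}_j\rfloor,\rho^{n,k,\ell}_j\wedge T]$ of length at most $2|\Theta_\ell|$ --- the $\delta\downarrow0$ in the hypothesis is consumed through $\ell\to\infty$, not through the $M_1$ parametric-representation machinery you invoke; and (iii) arranging, without loss of generality, that $T$ and the partition points $\Theta_\ell$ are continuity points of \emph{all} $H^n$, which is needed to convert the bound on $|H^n_{s-}-H^n_{(\rho^{n,k,\ell}_j\wedge T)-}|$ supplied by $\Gamma_{n,k,\ell}(\gamma)$ into the statement $\bar\sigma^{n,k,\ell}_j<\sigma^{n,k,\ell}_j$ and hence $Y^{j,n,k,\ell}_{\rho^{n,k,\ell}_j\wedge T}=1$ exactly. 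Only after this identity does the Portmanteau/open-set argument of Lemma \ref{lem:properties_of_weighing_limit_terms} and the passage $\ell\to\infty$, $k\to\infty$ give $\tilde Y^{j,0,k}=\ind_{[\tilde\tau^{0,k}_j,\infty)}$ and so $\tilde\xi^0_\sigma=1$. None of these devices appear in your proposal, and without them the step ``feeding \eqref{eq:cond_integral_conv_without_AVCI} into the ratio forces $1-\tilde\xi^0_\sigma=0$'' is a heuristic, not a proof.
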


\subsection{The $M_1$--$J_1$ case}
Provided the integrators $X^n$ converge on the more restrictive $J_1$ Skorokhod space, we are able to recover enough structure to ensure weak relative compactness, even for the $J_1$ topology, \emph{without} Condition \ref{ass:R1}. Hence, this leads to versions of Theorem \ref{thm:tightness_without_AVCI} and Proposition \ref{cor:integral_conv_without_AVCI} which do not require to assume Condition \ref{ass:R1}. This result is stated through the following corollary.

\begin{cor} \label{cor:main_result_M1_J1_case}
    If $(H^n,X^n) \Rightarrow (H^0,X^0)$ on $(\D_{\R^d}[0,\infty), \dM)\times (\D_{\R^d}[0,\infty), \dJ)$, then both Theorem \ref{thm:tightness_without_AVCI} and Proposition \ref{cor:integral_conv_without_AVCI} remain valid even \underline{without} Condition \ref{ass:R1}, and the convergence \eqref{eq:tightness_int_conv}, or \eqref{eq:int_conv_without_AVCI} respectively, hold on the stronger space $(\D_{\R^d}[0,\infty), \dM)\times (\D_{\R^{2d}}[0,\infty), \dJ)$.
\end{cor}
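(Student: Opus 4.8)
The plan is to prove Corollary \ref{cor:main_result_M1_J1_case} by revisiting the proof of Theorem \ref{thm:tightness_without_AVCI} (and Proposition \ref{cor:integral_conv_without_AVCI}) and tracking precisely where Condition \ref{ass:R1} is invoked. I expect that its sole role is to guarantee approximate monotonicity of the \emph{fattened graph} of the integral process $\int_0^\bullet H^n_{s-}\diff X^n_s$ across time windows in which several staggered increments of $X^n$ coalesce into a single limiting jump of $X^0$ --- exactly the mechanism that Example \ref{ex:simple_example_failure_of_convergence_without_regularity_condition_tightness_without_AVCI} exploits to break $M_1$ relative compactness. Once the integrators converge in $J_1$ rather than merely in $M_1$, each jump of $X^n$ near a given limiting jump time of $X^0$ is, asymptotically, a single isolated jump of the correct size; there is then no staggered-increment window to control, the sign of the integrand becomes irrelevant for tightness, and Condition \ref{ass:R1} can be dropped. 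I would formalise this via the Skorokhod representation theorem, working with almost sure $J_1$ convergence of $X^n$ and $M_1$ convergence of $H^n$.

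Concretely, I would first fix a threshold $a>0$ and split $X^n=X^{n,\le a}+X^{n,>a}$ into the part carrying jumps $\le a$ (together with the continuous part) and the finitely-many-on-compacts large jumps, as in \cite[pp.~1051--1054]{kurtzprotter}. For the $X^{n,\le a}$ part the analysis of Theorem \ref{thm:tightness_without_AVCI} applies verbatim: Condition \ref{ass:R1} never enters there, and letting $a\downarrow 0$ at the end, controlled by Condition \ref{ass:R2} and \eqref{eq:Mn_An_condition}, reproduces the small-jump contribution to the limit. For each large jump of $X^{n,>a}$, located at a stopping time $\sigma^n$ that converges by $J_1$ convergence to a limiting jump time $\sigma$ of $X^0$, the integral accrues the single increment $H^n_{\sigma^n-}\,\Delta X^n_{\sigma^n}$, with $\Delta X^n_{\sigma^n}\to\Delta X^0_\sigma$. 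Since $H^n$ converges in $M_1$, along a further subsequence $H^n_{\sigma^n-}$ converges to a coordinatewise convex combination $H^0_{\sigma-}+\tilde\xi^0_\sigma\,\Delta H^0_\sigma$ with $0\le\tilde\xi^0_\sigma\le1$ coordinatewise --- the integrand being possibly caught mid-morph through its own limiting jump --- and a diagonal argument over the countably many jump times, organised exactly via the stopping times $\tilde\sigma^{0,k}_j$ and the set $\mathcal{T}(\tilde H^0)$ already introduced, produces all weights simultaneously. Crucially, this yields a single clean jump of size $\tilde\xi^0_\sigma\,\Delta H^0_\sigma\,\Delta X^0_\sigma$ in the limiting integral at time $\sigma$, which is automatically $J_1$-admissible, with no monotonicity or sign restriction: Condition \ref{ass:R1} is bypassed.

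For the upgrade to joint convergence on $(\D_{\R^d}[0,\infty),\dM)\times(\D_{\R^{2d}}[0,\infty),\dJ)$, the key point is that the jumps of $\int_0^\bullet H^n_{s-}\diff X^n_s$ are exactly $H^n_{s-}\Delta X^n_s$ and hence occur precisely at the jump times of $X^n$, while in the limit the jumps of the correction term occur precisely at the jump times of $X^0$. Thus the pair $(X^n,\int_0^\bullet H^n_{s-}\diff X^n_s)$ has perfectly synchronised discontinuities, both before the limit and in the limit. I would then argue that a single time-deformation witnessing $J_1$ convergence $X^n\to X^0$ can be taken to witness $J_1$ convergence of the integrals as well: away from small neighbourhoods of the large jump times the integral oscillates little, by \eqref{eq:Mn_An_condition} applied to $X^{n,\le a}$ together with the continuous-mapping control on that part, while on those neighbourhoods the single-jump structure makes the match automatic; letting $a\downarrow 0$ closes the argument, in the spirit of the $J_1$--$J_1$ case of \cite{kurtzprotter} and the product-space criterion in \cite{whitt}. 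Finally, for Proposition \ref{cor:integral_conv_without_AVCI} in this setting, the extra hypothesis \eqref{eq:cond_integral_conv_without_AVCI} forces $H^n_{\sigma^n-}\to H^0_{\sigma-}+\Delta H^0_\sigma$, i.e.\ every weight $\tilde\xi^0_\sigma$ equals $1$, so the same reasoning gives full (non-subsequential) convergence with the complete correction term $\sum_{s\le\bullet}\Delta H^0_s\Delta X^0_s$ --- again without Condition \ref{ass:R1}.

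The main obstacle I anticipate is twofold. First, one must rigorously confirm that Condition \ref{ass:R1} is used \emph{only} in the fattened-graph monotonicity step of Theorem \ref{thm:tightness_without_AVCI}, ruling out any residual mechanism by which a sign reversal of $H^0$ at a jump could still spoil $M_1$ (let alone $J_1$) tightness of the integral once the integrators converge in $J_1$; this amounts to a careful audit of the $M_1$ oscillation modulus estimates in that proof. Second, the passage from marginal to joint $J_1$ convergence on $\D_{\R^{2d}}$ is delicate precisely because one must exhibit a \emph{common} parametrisation for $X^n$ and for the integral: here the synchronisation of jumps together with the smallness of the between-jumps oscillation coming from \eqref{eq:Mn_An_condition} is exactly what is needed, but the bookkeeping over the threshold $a\downarrow 0$ and the countable family of jump times must be carried out with some care.
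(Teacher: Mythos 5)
Your route is genuinely different from the paper's, and it has a couple of concrete gaps.

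The paper does not split $X^n$ into small and large jumps. Instead it keeps the existing decomposition $\int H^n_{s-}\,dX^n_s=\int(H^n-\tilde H^{n,k,\ell})_{s-}\,dX^n_s+\int\tilde H^{n,k,\ell}_{s-}\,dX^n_s$ and makes a surgical observation: intersecting $A_{n,k}$ with a set $\tilde A_{n,\ell}$ on which the standard $J_1$ oscillation modulus of $X^n$ over mesh $|\Theta_\ell|$ is small, the monotone step functions $\xi^{j,n,k,\ell}$ produced by Lemma \ref{lem:approx_of_M1_by monotone_pieces_X} on each window $[\lfloor\tau^{n,k,\ell}_j\rfloor,\lceil\tau^{n,k,\ell}_j\rceil]$ degenerate to single-jump indicators $\ind_{[\kappa^{n,k,\ell}_j,\infty)}$, so the $Y^{j,n,k,\ell}$ and all subsequent correction terms are $J_1$-tight single-jump processes. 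This is precisely what replaces Condition \ref{ass:R1}: the correction term converges in $J_1$, and the patching of the two integral pieces is handled not by $M_1$-continuity of addition (which is where R1 entered the proof of Theorem \ref{thm:tightness_without_AVCI}) but by upgrading to joint $J_1$ convergence of $(X^n,\text{integrals})$ via Lemma \ref{lem:joint_convergence_from_coinciding_jump_times}, using the pointwise bound $|\Delta\int H^n\,dX^n|\le 2|H^n|^*|\Delta X^n|$, followed by \cite[Prop.~A.3]{andreasfabrice_theorypaper}.

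Two gaps in your version. First, the assertion that for $X^{n,\le a}$ ``Condition \ref{ass:R1} never enters'' is not correct as stated: R1 enters the proof of Theorem \ref{thm:tightness_without_AVCI} at the very last step, where one adds $\int(H-\tilde H)\,dX$ and the correction term in $M_1$, and this step does not disappear just because the jumps of $X^{n,\le a}$ are bounded by $a$. What is true is that the correction term for the small-jump part tends to zero as $a\downarrow 0$ (using Condition \ref{ass:R2} and boundedness of the weights), so R1 becomes asymptotically irrelevant there, but that requires a careful double limit $n\to\infty$, $a\downarrow 0$ that you do not control. Second, the step from marginal to joint $J_1$ convergence on $\D_{\R^{2d}}$ is precisely what the paper's Lemma \ref{lem:joint_convergence_from_coinciding_jump_times} is for, and your ``common parametrisation via synchronised jumps'' sketch needs the quantitative ingredient it provides, namely that the jump sizes of the integrals are dominated by the jump sizes of $X^n$, so that $J_1$ convergence of $X^n$ pins down the jump times of the integrals and forces the time-changes to coincide. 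That dominance comes from the integrand's stochastic boundedness, $|H^n|^*$, which you would also need to invoke; without it, one cannot rule out that the integral and the integrator converge in $J_1$ marginally but with genuinely different reparametrisations, breaking joint convergence.
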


\section{Proofs} \label{sec:proofs}
Define
\begin{align*}
    \operatorname{Disc}_{\Pro^0}(H^0,X^0) \; := \; \Big\{s>0 \; : \; \Pro^0\big(|\Delta H^0_s|\vee |\Delta X^0_s| > 0 \big) \, > \, 0 \Big\}  
\end{align*}
with the usual notation $a\vee b:= \operatorname{max}\{a,b\}$, and recall that this set is at most countable (see e.g. \cite[Lem.~VI.3.12]{shiryaev}).
Towards a proof of Theorem \ref{thm:tightness_without_AVCI} and Proposition \ref{cor:integral_conv_without_AVCI}, we construct the subsequent quantities. Let $\{\Theta_\ell\}$ be a sequence of nested partitions such that 
\begin{align*} 
    \Theta_\ell \; = \; \{\nu^\ell_i \, : \, i\ge 1, \, 0= \nu^\ell_1&<\nu^\ell_2<..., \,\text{ with } \nu^\ell_i \to \infty \text{ as } i\to \infty\} \; \subseteq \;  [0, \infty) \setminus \operatorname{Disc}_{\Pro^0}(H^0,X^0) 
\end{align*}
with vanishing mesh size, i.e., $|\Theta_\ell|:=\operatorname{sup}_{i\ge 2}(\nu^\ell_{i}-\nu^\ell_{i-1}) \to 0$ as $\ell \to \infty$, $|\Theta_\ell| \notin \{|x-y|: x,y \in \operatorname{Disc}_{\Pro^0}(H^0,X^0)\}$, and $|\Theta_\ell|^{\downarrow}:=\operatorname{inf}_{i\ge 2}(\nu^\ell_{i}-\nu^\ell_{i-1})>0$ for all $\ell \ge 1$. Further, define
$$ 
\lceil x \rceil_{\Theta_\ell}\; := \; \begin{cases}
\operatorname{min}\{\nu \in \Theta_\ell \, : \, \nu^\ell_i \ge x\} \quad&, \text{if } x \notin \Theta_\ell\\
\operatorname{min}\{\nu \in \Theta_\ell: \nu^\ell_i \ge x+|\Theta_\ell|^{\downarrow}/2\}\quad&, \text{if }  x \in \Theta_\ell
\end{cases}
$$
and analogously for $\lfloor x \rfloor_{\Theta_\ell}$, where we simply reverse the inequalities, replace the minimum by a maximum, and $|\Theta_\ell|^{\downarrow}/2$ by $-|\Theta_\ell|^{\downarrow}/2$. The distinction made for the second case $x\in \Theta_\ell$ serves purely technical reasons relevant to the proof of Lemma \ref{lem:conv_relevant_quant_remainder_integrals}.
Choose a strictly decreasing null sequence $\{a_k\}$ in such a way that 
$ a_k  \in  (0,\infty) \setminus ( V^{\Theta_\ell} \cup W)$ for all $\ell \ge 1$, where $V^{\Theta_\ell}$ is defined in \eqref{eq:defi_set_V_Theta} of Lemma \ref{lem:prerequisite_final_suitable_sequence_of_large_increments_stopping_time_defi} and $W$ in Lemma \ref{lem:existence_of_suitable_sequence_not_hit_by_jumps} in the Appendix.
To given $k,\ell\ge 1$, $n\ge 0$ set $\rho^{n,k,\ell}_0\equiv 0$ and define iteratively  
\begin{align*}
    \tau^{n,k,\ell}_j \; &:= \; \operatorname{inf}\biggl\{t>\rho^{n,k,\ell}_{j-1} \, : \, \operatorname{sup} \big\{ |H^{n,(i)}_t-H^{n,(i)}_s| \, : \, \rho^{n,k,\ell}_{j-1} \vee (t- |\Theta_\ell|) \le s \le t, \; 1\le i\le d \big\} > a_{k} \biggr\} \\[1ex]
    \rho^{n,k,\ell}_j \; &:= \; \lceil \tau^{n,k,\ell}_j\rceil_{\Theta_\ell} 
\end{align*}
for all $j\ge 1$, where $H^{n,(i)}$ denotes the $i$-th coordinate of $H^n$. Then, we can set  
\begin{align}
    \tilde{H}^{n,k,\ell}_t \; := \;   \begin{cases} H^n_t-H^n_{\tau^{n,k,\ell}_j-} \qquad &,\text{if } \tau^{n,k,\ell}_j \le t < \rho^{n,k,\ell}_j \text{ for some } j\ge 1,\\
    0 \qquad &,\text{otherwise}. \end{cases} \label{eq:definition_correction_processes}
\end{align}
Note that for fixed $k,\ell\ge 1$, $n\ge 0$, there are almost surely only finitely many $\tau_j$ on compact time intervals. We can now proceed to the proof of the main results, and we will proceed in several auxiliary steps (Sections \ref{sec:proofs_corrected_integrands} \& \ref{sec:proof_dealing_with_the_remainder_integrals}). The final proofs of the main results will then be presented in Sections \ref{sec:proof_main_result}, \ref{sec:proof_proposition_anti_AVCI} and \ref{sec:proof_corollary_J1_integrators_main_result}.

\subsection{Relative compactness for the corrected integrands} \label{sec:proofs_corrected_integrands}
\begin{lem} \label{lem:tightness_of_corrected_integrands_1}
    For fixed $k,\ell \ge 1$, the sequence $\{H^n \, - \, \tilde{H}^{n,k,\ell}\}_{n\ge 1}$ is tight on $(\D_{\R^{d}}[0,\infty), \dM)$.
\end{lem}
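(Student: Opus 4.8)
The plan is to show that the ``corrected'' process $H^n - \tilde H^{n,k,\ell}$ is, by construction, a path that has been made to behave well at the large jumps of $H^n$: near each excursion that would trigger a big oscillation of $H^n$, we subtract exactly the increment accumulated since the pre-jump value, so that $H^n - \tilde H^{n,k,\ell}$ is held constant (equal to $H^n_{\tau^{n,k,\ell}_j-}$) on $[\tau^{n,k,\ell}_j, \rho^{n,k,\ell}_j)$ and then jumps back to $H^n_{\rho^{n,k,\ell}_j}$ at the deterministic-grid time $\rho^{n,k,\ell}_j \in \Theta_\ell$. So the strategy is: (1) fix $k,\ell$; (2) invoke the joint weak convergence $(H^n, X^n) \Rightarrow (H^0, X^0)$ on $(\D_{\R^d}[0,\infty),\dM)^2$ together with a Skorokhod representation to work with almost-sure $M_1$ convergence of $H^n \to H^0$; (3) analyse the map $H^n \mapsto H^n - \tilde H^{n,k,\ell}$ and show the images form a relatively compact set in $(\D_{\R^d}[0,\infty),\dM)$, using Whitt's compactness criterion for $M_1$ (control of the $M_1$ oscillation modulus $w_{\delta}$ and of $\sup_{[0,T]}|\cdot|$, uniformly over $n$).

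The key steps in order. First I would record the elementary bound $|H^n - \tilde H^{n,k,\ell}|^*_T \le |H^n|^*_T$ (on the ``otherwise'' set the corrected process equals $H^n$; on each $[\tau_j,\rho_j)$ it equals the earlier value $H^n_{\tau_j-}$), so uniform control of the sup follows from tightness of $\{H^n\}$, which is implied by its $M_1$ convergence. Second, and this is the crux, I would control the $M_1$ oscillation function $w_\delta$ of $H^n - \tilde H^{n,k,\ell}$ on $[0,T]$ for small $\delta$. Away from the correction intervals $\bigcup_j [\tau^{n,k,\ell}_j, \rho^{n,k,\ell}_j)$ the corrected path coincides with $H^n$, whose $M_1$ oscillation is controlled uniformly in $n$ by tightness of $\{H^n\}$. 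Inside a correction interval the corrected path is \emph{constant}, so it contributes nothing to oscillation there; the only delicate points are the two endpoints $\tau_j$ and $\rho_j$. At $\tau_j$ the corrected path has a jump of size $\le a_k$ by the very definition of $\tau_j$ as the first time the running oscillation of $H^n$ over the window $[\cdot - |\Theta_\ell|, \cdot]$ exceeds $a_k$ — so the jump that is ``left over'' at $\tau_j$ is small, bounded by $a_k$, hence harmless for $w_\delta$ once $\delta$ is small, provided distinct $\tau_j$'s are separated; and on compacts there are a.s.\ only finitely many $\tau_j$ (as noted after \eqref{eq:definition_correction_processes}), so for $\delta$ smaller than the minimal spacing they do not interact. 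At $\rho_j$ the path jumps to $H^n_{\rho_j}$; since $\rho_j = \lceil \tau_j \rceil_{\Theta_\ell} \in \Theta_\ell$ and $|\Theta_\ell|^{\downarrow} > 0$, consecutive $\rho_j$'s are separated by at least $|\Theta_\ell|^{\downarrow}/2 > 0$, so again for $\delta$ below a fixed (deterministic, $n$-independent) threshold these jumps are isolated and each one individually is $M_1$-admissible. Third, I would assemble these observations into the two required moduli-of-continuity estimates and invoke the characterisation of $M_1$-tightness (e.g.\ Whitt \cite{whitt}, Theorem 12.12.3 or its Skorokhod-space analogue): $\lim_{\delta\downarrow 0}\limsup_n \Pro^n(w_\delta(H^n - \tilde H^{n,k,\ell}) > \eta) = 0$ for all $\eta > 0$, plus stochastic boundedness of the sup and control at $0$ and near $\infty$.

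The main obstacle I anticipate is the second step: the $M_1$ oscillation modulus $w_\delta$ is genuinely subtle — it is not additive over subintervals, and a path that is well-behaved on each piece of a partition need not be well-behaved across the seams. So the real work is to show that splicing a constant segment into $H^n$ across $[\tau_j,\rho_j)$ cannot \emph{increase} the $M_1$ oscillation beyond a small additive error controlled by $a_k$ and by the (deterministic) grid spacing $|\Theta_\ell|^{\downarrow}$. I would handle this by exploiting the ``order'' structure underlying $M_1$: the $M_1$ modulus at a point is small iff the path value there lies (nearly) in the order interval spanned by nearby values; since on $[\tau_j,\rho_j)$ we froze the value at $H^n_{\tau_j-}$ and the genuine jump triggering $\tau_j$ had magnitude $\le a_k$, the frozen value is within $a_k$ of $H^n_{\tau_j}$, so the splice perturbs the relevant order intervals by at most $a_k$. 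A careful but routine case analysis — comparing windows that fall entirely inside a correction interval, entirely outside, or straddle an endpoint — then yields $w_\delta(H^n - \tilde H^{n,k,\ell}) \le w_{\delta}(H^n) + C(a_k + \text{osc over windows of length } \le \delta)$ for $\delta < |\Theta_\ell|^{\downarrow}/2$ on the event that the $\tau_j$ are $\delta$-separated, and letting $\delta \downarrow 0$ finishes the argument. Since $k,\ell$ are fixed here, $a_k$ is just a fixed constant and does not need to vanish; the genuine smallness is extracted from $w_\delta(H^n) \to 0$ alone, which is exactly what tightness of $\{H^n\}$ on $(\D_{\R^d}[0,\infty),\dM)$ supplies.
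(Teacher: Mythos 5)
There is a genuine gap, and it sits exactly where you locate the ``crux''. Your final estimate concedes a non-vanishing additive error: you claim $w'(H^n-\tilde H^{n,k,\ell},\delta)\le w'(H^n,\delta)+C\bigl(a_k+\text{osc.\ over windows of length }\le\delta\bigr)$ and then assert that ``$a_k$ is just a fixed constant and does not need to vanish''. That cannot prove the lemma: Whitt's characterisation of $M_1$-tightness (Thm.~12.12.3, which the paper also uses) requires $\lim_{\delta\downarrow 0}\limsup_n\Pro^n\bigl(w'(H^n-\tilde H^{n,k,\ell},\delta)>\eta\bigr)=0$ for \emph{every} $\eta>0$, in particular for $\eta<Ca_k$, so a residual term of fixed size $Ca_k$ leaves the modulus condition unverified. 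Moreover the gap is not merely formal: the delicate seam is at $\rho_j=\lceil\tau^{n,k,\ell}_j\rceil_{\Theta_\ell}$, where the corrected path jumps from the frozen value $H^n_{\tau^{n,k,\ell}_j-}$ up to $H^n_{\rho_j}$ (this jump is \emph{large}, of the order of the jump of $H^n$ at $\tau_j$; incidentally, the corrected path is continuous at $\tau_j$, not carrying a jump $\le a_k$ there). ``Isolated, hence $M_1$-admissible'' is not a valid principle here: if $H^n$ makes a residual move of size $\le a_k$ in the opposite direction immediately after $\rho_j$, the corrected path overshoots and its $M_1$ modulus stays of order $a_k$ for all $\delta$. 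Concretely, take $d=1$, $H^n=B\ind_{[\tau,\infty)}-b\ind_{[\rho+1/n,\infty)}$ with $\tau\notin\Theta_\ell$, $\rho=\lceil\tau\rceil_{\Theta_\ell}$, $B>a_k$, $0<b\le a_k$: then $H^n$ converges in $M_1$ (even $J_1$), while $H^n-\tilde H^{n,k,\ell}=B\ind_{[\rho,\infty)}-b\ind_{[\rho+1/n,\infty)}$ satisfies $w'(H^n-\tilde H^{n,k,\ell},\delta)\ge b$ for all $\delta>0$ and all large $n$, so the corrected sequence is \emph{not} $M_1$-tight. Nothing in your argument excludes this configuration, because you only ever use $M_1$-tightness of $\{H^n\}$ and the freeze construction.

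What rules such configurations out, and what your proof is missing, is the property of the grid that the paper exploits: $\Theta_\ell\subseteq[0,\infty)\setminus\operatorname{Disc}_{\Pro^0}(H^0,X^0)$, so grid points are a.s.\ continuity points of the limit, and hence (via Skorokhod representation and \cite[Thm.~12.5.1(v)]{whitt}) one has the local uniform control \eqref{eq:local_uniform_tightness_at_continuity_points_corrected_integrands_1}, i.e.\ $\lim_{\delta\downarrow 0}\limsup_n\Pro^n\bigl(u(H^n,\nu,\delta)>\eta\bigr)=0$ for $\nu\in\Theta_\ell$. The paper's case analysis then bounds the seam contribution at $\rho_j$ by $2\max_{\nu\in\Theta_\ell\cap[0,T]}u(H^n,\nu,\delta)$ rather than by a constant times $a_k$, yielding $w'(H^n-\tilde H^{n,k,\ell},\delta)\le w'(H^n,\delta)+2\max_{\nu\in\Theta_\ell\cap[0,T]}u(H^n,\nu,\delta)$, and both terms vanish in the required iterated limit. (In the counterexample above, the limit $H^0$ jumps exactly at the grid point $\rho$, which the construction of $\Theta_\ell$ forbids.) Your steps for conditions \eqref{it1:tightness_of_corrected_integrands_1} and \eqref{it3:tightness_of_corrected_integrands_1} (the sup bound and the behaviour at $0$ and $T$) agree with the paper and are fine; to repair the modulus step you must replace the $Ca_k$ error by the oscillation of $H^n$ at the grid points $\Theta_\ell$ and invoke the continuity of $H^0$ there, i.e.\ use the joint convergence $H^n\Rightarrow H^0$ and the choice of $\Theta_\ell$, not tightness of $\{H^n\}$ alone.
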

\begin{proof}
    According to \cite[Thm.~12.12.3]{whitt}, the sequence $\{H^n \, - \, \tilde{H}^{n,k,\ell}\}_{n\ge 1}$ is $M_1$ tight if and only if, for every $T>0$ in a dense subset of $[0,\infty)$,
    \begin{enumerate}[(1)]
        \item \label{it1:tightness_of_corrected_integrands_1} the sequence is stochastically bounded on $[0,T]$, that is
        $$ \lim_{R\to \infty} \, \limsup\limits_{n\to \infty} \, \Pro^n\big(|H^n \, - \, \tilde{H}^{n,k,\ell}|^*_T>R\big) \, = \, 0,$$
        \item \label{it2:tightness_of_corrected_integrands_1}the $M_1$ modulus of continuity vanishes asymptotically in probability on increasingly small intervals, that is for all $\eta>0$ it holds that
        $$ \lim_{\delta \downarrow 0} \, \limsup\limits_{n\to \infty} \, \Pro^n\big(w'(H^n \, - \, \tilde{H}^{n,k,\ell}, \delta) > \eta \big) \, = \, 0,$$
        where $w'(x,\delta):= \operatorname{sup}\{ \, \norm{x_t-[x_s,x_r]} :  0\vee (t-\delta) \le s\le t\le r \le (t+\delta)\wedge T\}$ with $\norm{x_t-[x_s,x_r]}:= \operatorname{inf}\{|x_t-y | : 0\le \lambda\le 1, \, y=\lambda x_s + (1-\lambda)x_r \}$.
        \item \label{it3:tightness_of_corrected_integrands_1} there is local uniform convergence at $0$ and $T$, which means for all $\eta>0$ it holds that
        $$ \lim_{\delta \downarrow 0} \, \limsup\limits_{n\to \infty} \, \Pro^n\big(u(H^n \, - \, \tilde{H}^{n,k,\ell},0,\delta) \vee u(H^n \, - \, \tilde{H}^{n,k,\ell},T,\delta)  > \eta \big) \, = \, 0, $$
        where $u(x,t,\delta):= \operatorname{sup}\{ |x_r-x_s| : 0\vee (t-\delta) \le r,s \le (t+\delta)\wedge T\}$.
    \end{enumerate}
    Since $H^n \Rightarrow H^0$ on $(\D_{\R^{d}}[0,\infty), \dM)$, an application of Skorokhod's representation theorem together with \cite[Thm.~12.5.1(v)]{whitt} gives us that for every $t \in [0,\infty)\setminus \operatorname{Disc}_{\Pro^0}(H^0)$ there is local uniform tightness at $t$ of the type
    \begin{align}
        \lim_{\delta \downarrow 0} \, \limsup\limits_{n\to \infty} \, \Pro^n\big(u(H^n,t, \delta) > \eta \big) \, = \, 0 \label{eq:local_uniform_tightness_at_continuity_points_corrected_integrands_1}
    \end{align}
    for all $\eta>0$. Recall that $\operatorname{Disc}_{\Pro^0}(H^0)$ is a countable subset of $[0,\infty)$ and let $T>0$, $T \in [0,\infty) \setminus \operatorname{Disc}_{\Pro^0}(H^0)$. Clearly, due to the tightness of the sequence $\{H^n\}$ on the $M_1$ Skorokhod space, it satisfies the conditions \eqref{it1:tightness_of_corrected_integrands_1}--\eqref{it3:tightness_of_corrected_integrands_1} with $H^n \, - \, \tilde{H}^{n,k,\ell}$ replaced by $H^n$. Then, \eqref{it1:tightness_of_corrected_integrands_1} and \eqref{it3:tightness_of_corrected_integrands_1} follow immediately---by definition of $\tilde{H}^{n,k,\ell}$---from $|H^n \, - \, \tilde{H}^{n,k,\ell}|^*_T \le |H^n|^*_T$ and $u(H^n \, - \, \tilde{H}^{n,k,\ell},0,\delta) \vee u(H^n \, - \, \tilde{H}^{n,k,\ell},T,\delta)\le u(H^n ,0,\delta)\vee u(H^n ,T,\delta)$. Thus, it only remains to show \eqref{it2:tightness_of_corrected_integrands_1}. Towards this goal, let $\varepsilon,\eta>0$ and choose $\delta>0$, $N\ge 1$ such that $\Pro^n(A_{n,\delta}) > 1-\varepsilon$ for all $n\ge N$, where 
    $$ A_{n,\delta} \; := \; \Big\{ w'(H^n, \delta)\, \vee  \, 2\operatorname{max}_{\nu \in \Theta_\ell \cap [0,T]} u(H^n,\nu, \delta)  \;\le \; \frac{\eta}{2}\Big\}. $$
    Indeed, this choice is possible due to \eqref{eq:local_uniform_tightness_at_continuity_points_corrected_integrands_1} and the fact that $\{H^n\}$ is $M_1$ tight and therefore satisfies \eqref{it2:tightness_of_corrected_integrands_1} with $H^n \, - \, \tilde{H}^{n,k,\ell}$ replaced by $H^n$ (note that $\Theta_\ell \cap [0,T]$ is finite). Now, let $r,s,t>0$ be such that $0\vee (t-\delta) \le s\le t\le r \le (t+\delta)\wedge T$ and fix $\omega \in \Omega^n$. First, consider the case $\rho^{n,k,\ell}_{j-1}(\omega) \le s < \tau^{n,k,\ell}_j(\omega) \le t < \rho^{n,k,\ell}_j(\omega)\le r < \tau^{n,k,\ell}_{j+1}(\omega)$ for some $j\ge 1$, for which
    \begin{align*}
        &\operatorname{inf}_{\lambda \in [0,1]} |H^n_t-\tilde{H}^{n,k,\ell}_t - \lambda (H^n_s - \tilde{H}^{n,k,\ell}_s) -(1-\lambda)(H^n_r -\tilde{H}^{n,k,\ell}_r)| \\
        \le \; &\operatorname{inf}_{\lambda \in [0,1]} |H^n_{\tau^{n,k,\ell}_j-} - \lambda H^n_s -(1-\lambda)H^n_r | \; \le \; w'(H^n,\delta),
    \end{align*}
    where we have omitted the dependence of all quantities on $\omega$. If $\tau^{n,k,\ell}_j(\omega) \le s < \rho^{n,k,\ell}_j(\omega) \le t < r < \tau^{n,k,\ell}_{j+1}(\omega)$ for some $j\ge 1$, then 
    \begin{align}
        &\operatorname{inf}_{\lambda \in [0,1]} \, |H^n_t-\tilde{H}^{n,k,\ell}_t - \lambda (H^n_s - \tilde{H}^{n,k,\ell}_s) -(1-\lambda)(H^n_r -\tilde{H}^{n,k,\ell}_r)| \notag \\
        &\operatorname{inf}_{\lambda \in [0,1]} \, |H^n_t-\tilde{H}^{n,k,\ell}_t - (H^n_s - \tilde{H}^{n,k,\ell}_s) -(1-\lambda)(H^n_r -\tilde{H}^{n,k,\ell}_r)+ (1-\lambda)(H^n_s - \tilde{H}^{n,k,\ell}_s)| \notag \\
        \le \; & \operatorname{inf}_{\lambda \in [0,1]} \, \big( |H^n_t - H^n_s| \; + \; |H^n_r - H^n_s| \; + \; |H^n_{\tau^{n,k,\ell}_j-}-(1-\lambda)H^n_{\tau^{n,k,\ell}_j-}|\big) \notag \\
        \le \; &2 \operatorname{max}_{\nu \in \Theta_\ell \cap [0,T]} u(H^n,\nu, \delta) \label{eq:different_bound_than_second_tightness_result_corrected_integrands}
    \end{align}
    since $\rho^{n,k,\ell}_j \in \Theta_\ell$ and $0\vee(\rho^{n,k,\ell}_j-\delta)\le r,s \le (\rho^{n,k,\ell}_j+\delta)\wedge T$. For the sake of brevity, we do not provide details on the remaining cases and we shall simply note that such bound can be derived in similar ways for them, so to obtain
    $$ w'(H^n-\tilde{H}^{n,k,\ell},\delta) \; \le \; w'(H^n,\delta) \, + \, 2\operatorname{max}_{\nu \in \Theta_\ell \cap [0,T]} u(H^n,\nu, \delta)$$
    almost surely. Hence, $1-\varepsilon < \Pro^n(A_{n,\delta}) \le \Pro^n(w'(H^n-\tilde{H}^{n,k,\ell},\delta)\le\eta)$ for all $n\ge N$. Since 
    $\varepsilon >0$ was arbitrary, we deduce \eqref{it2:tightness_of_corrected_integrands_1}.
\end{proof}

By Prokhorov's theorem, from Lemma \ref{lem:tightness_of_corrected_integrands_1} we deduce that, for any $k,\ell \ge 1$ and subsequence of $\{H^{n}-\tilde{H}^{n,k,\ell}\}_{n\ge 1}$, there exists a further subsequence $\{H^{n_{m}(k,\ell)}-\tilde{H}^{n_m(k,\ell),k,\ell}\}_{m\ge 1}$ and a càdlàg stochastic process $Y^{\{n_m(k,\ell)\}, k, \ell}$ such that 
\begin{align}
    H^{n_m(k,\ell)}-\tilde{H}^{n_m(k,\ell),k,\ell} \; \Rightarrow \; Y^{\{n_m(k,\ell)\}, k, \ell} \label{eq:convergence_tightness_of_corrected_integrands_1}
\end{align}
on $(\D_{\R^{d}}[0,\infty), \dM)$ as $m\to \infty$. Moreover, by definition of the processes \eqref{eq:definition_correction_processes}, $H^{n}_\nu-\tilde{H}^{n,k,\ell}_\nu=H^n_\nu$ for all $\nu \in \Theta_\ell$, and therefore the finite-dimensional distributions of $H^{n_m}-\tilde{H}^{n_m(k,\ell),k,\ell}$ converge to those of $H^0$ on $\Theta_\ell \subseteq [0,\infty) \setminus \operatorname{Disc}_{\Pro^0}(H^0,X^0)$. 


\begin{lem} \label{lem:tightness_of_corrected_integrands_2}
    For fixed $k \ge 1$ and a sequence $\{\ell_r\}$ with $\ell_r\to \infty$ as $r\to \infty$, the sequence of respective limits $\{Y^{\{n_m(k,\ell_r)\},k,\ell_r}\}_{r\ge 1}$ from \eqref{eq:convergence_tightness_of_corrected_integrands_1} is tight on $(\D_{\R^{d}}[0,\infty), \dM)$.
\end{lem}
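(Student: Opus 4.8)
The plan is to reduce the assertion to the tightness of a single diagonal sequence, to which Whitt's $M_1$ tightness criterion \cite[Thm.~12.12.3]{whitt} can then be applied directly. Abbreviate $Y^{\cdot,k,\ell}:=Y^{\{n_m(k,\ell)\},k,\ell}$. Since $(\D_{\R^d}[0,\infty),\dM)$ is a Polish space (see \cite[§12]{whitt}), weak convergence of laws on it is metrizable by some metric $d$; using \eqref{eq:convergence_tightness_of_corrected_integrands_1}, for every $r\ge 1$ we may pick an index $N_r$ from the subsequence defining $Y^{\cdot,k,\ell_r}$, with $N_r\to\infty$, such that $d\big(\mathcal{L}(H^{N_r}-\tilde H^{N_r,k,\ell_r}),\mathcal{L}(Y^{\cdot,k,\ell_r})\big)<1/r$. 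Relative compactness of a sequence of laws is unaffected by passing to a sequence at vanishing $d$-distance (the two sequences having the same subsequential limits), and on a Polish space it is equivalent to tightness by Prokhorov's theorem; hence it suffices to prove that $\{H^{N_r}-\tilde H^{N_r,k,\ell_r}\}_{r\ge 1}$ is tight on $(\D_{\R^d}[0,\infty),\dM)$.

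For this diagonal sequence I verify the three conditions of \cite[Thm.~12.12.3]{whitt}, for $T$ in a dense subset of $[0,\infty)$ avoiding the countable set $\operatorname{Disc}_{\Pro^0}(H^0)$, reproducing the structure of the proof of Lemma \ref{lem:tightness_of_corrected_integrands_1} but now uniformly in $r$. Conditions (1) and (3) follow from the $M_1$ tightness of $\{H^n\}$ through the pathwise dominations $|H^{N_r}-\tilde H^{N_r,k,\ell_r}|^*_T\le|H^{N_r}|^*_T$ and $u(H^{N_r}-\tilde H^{N_r,k,\ell_r},t,\delta)\le u(H^{N_r},t,\delta)$ for $t\in\{0,T\}$ recorded there, neither of which involves $\ell_r$. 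The crux is condition (2), for which I claim the pathwise bound
\[
w'\big(H^{N_r}-\tilde H^{N_r,k,\ell_r},\,\delta\big)\;\le\;w'\big(H^{N_r},\,3\delta\big)\qquad\text{whenever }|\Theta_{\ell_r}|\le\delta .
\]
Granting it, since only finitely many $r$ have $|\Theta_{\ell_r}|>\delta$, the $M_1$ modulus control for $\{H^n\}$ gives $\lim_{\delta\downarrow 0}\limsup_{r\to\infty}\Pro^{N_r}\big(w'(H^{N_r}-\tilde H^{N_r,k,\ell_r},\delta)>\eta\big)\le\lim_{\delta\downarrow 0}\limsup_{n\to\infty}\Pro^n\big(w'(H^n,3\delta)>\eta\big)=0$, which is condition (2).

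To prove the displayed bound, recall from \eqref{eq:definition_correction_processes} that $H^{N_r}-\tilde H^{N_r,k,\ell_r}$ agrees with $H^{N_r}$ off the correction intervals $[\tau^{N_r,k,\ell_r}_j,\rho^{N_r,k,\ell_r}_j)$, each of length at most $|\Theta_{\ell_r}|\le\delta$, and that on such an interval it is frozen at the value $H^{N_r}_{\tau^{N_r,k,\ell_r}_j-}$ --- which is exactly the left-hand limit of $H^{N_r}$ at $\tau^{N_r,k,\ell_r}_j$, so that the corrected path is continuous at $\tau^{N_r,k,\ell_r}_j$ and has a single jump at $\rho^{N_r,k,\ell_r}_j$. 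Hence every value taken by the corrected path on $[0,T]$ equals a value, or a left-hand limit, of $H^{N_r}$ at some time within $|\Theta_{\ell_r}|$ of the original instant. Consequently, given any admissible triple of times in the definition of $w'(\cdot,\delta)$, replacing each of the three arguments by the corresponding instant for $H^{N_r}$ perturbs it by at most $|\Theta_{\ell_r}|$, preserves the ordering of the three times, and keeps them inside a window of radius at most $3\delta$; passing to the relevant left-hand limits of $H^{N_r}$ and using monotonicity of $w'(H^{N_r},\cdot)$ in the radius then dominates $\norm{\,\cdot\,}$ for the corrected path by $w'(H^{N_r},3\delta)$, and taking the supremum over triples yields the claim. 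The step I expect to be the main obstacle is exactly this pathwise oscillation bound: the crude estimate used in Lemma \ref{lem:tightness_of_corrected_integrands_1} carries the term $2\operatorname{max}_{\nu\in\Theta_\ell\cap[0,T]}u(H^n,\nu,\delta)$, a maximum over the whole partition, which need not vanish uniformly as $\Theta_{\ell_r}$ refines, so one genuinely has to exploit the structural fact that each correction interval has vanishing length and behaves like a single jump whose frozen level is a left-hand limit of $H^{N_r}$ --- this is what permits the entire oscillation estimate to be re-expressed purely through $w'(H^{N_r},\cdot)$, and thereby closed via the $M_1$ tightness of $\{H^n\}$, with no residual dependence on $\ell_r$; some additional care is needed when several (possibly adjacent or clustered) correction intervals fall within one $\delta$-window, but since the bound depends only on the three endpoint values --- each an $H^{N_r}$-value near the original instant --- rather than on their number, this is handled by the same argument.
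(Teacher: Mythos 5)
Your proposal is correct and follows essentially the same route as the paper: a metrizability/diagonal reduction to tightness of the approximating sequence $\{H^{N_r}-\tilde H^{N_r,k,\ell_r}\}_{r\ge1}$, with conditions \eqref{it1:tightness_of_corrected_integrands_1} and \eqref{it3:tightness_of_corrected_integrands_1} inherited as in Lemma \ref{lem:tightness_of_corrected_integrands_1}, and condition \eqref{it2:tightness_of_corrected_integrands_1} closed by the same structural observation that the corrected path is frozen at $H^{N_r}_{\tau^{N_r,k,\ell_r}_j-}$, a value of $H^{N_r}$ within $|\Theta_{\ell_r}|$ in time, so that $w'$ of the corrected path is dominated by $w'$ of $H^{N_r}$ at a slightly enlarged window (your $w'(H^{N_r},3\delta)$ versus the paper's $w'(H^{n_b},\delta/2+2|\Theta_{\ell_r}|)$). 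The difference is only in constants and in presenting the case analysis via a uniform time-replacement argument rather than the paper's explicit cases.
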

\begin{proof}
Let $\varepsilon,\eta>0$.By \eqref{eq:convergence_tightness_of_corrected_integrands_1} and since weak convergence is metrizeable, there exist $b:=b(r)$ such that $d_w(H^{n_b(k,\ell_r)}-\tilde H^{n_b(k,\ell_r),k,\ell_r}, Y^{\{n_m(k,\ell_r)\},k,\ell_r}) \to 0$ as $b \to \infty$, where $d_w$ denotes any metric generating the topology of weak convergence of Borel probability measures on $(\D_{\R^{d}}[0,\infty), \dM)$. Thus, in order to show $M_1$ tightness of the sequence $\{Y^{\{n_m(k,\ell_r)\},k,\ell_r}\}_{r\ge 1}$ it suffices to show tightness of the sequence $\{H^{n_b(k,\ell_r)}-\tilde H^{n_b(k,\ell_r),k,\ell_r}\}_{r\ge 1}$. Without loss of generality, we may assume that the $b$ are such that there exists $\delta>0$ with 
$$ \Pro^{n_b(k,\ell_r)} \Big( w'\big(H^{n_b(k,\ell_r)}, \delta\big)\;\le \; \eta \Big) \; > \; 1- \varepsilon$$
for all $b\ge 1$ (as a result of the $M_1$ tightness of the $H^n$; see the partial condition \eqref{it2:tightness_of_corrected_integrands_1} in the proof of Lemma \ref{lem:tightness_of_corrected_integrands_1}). Let us write $n_b$ instead of $n_b(k,\ell_r)$ for brevity. Now, proceed in analogy to the proof of Lemma \ref{lem:tightness_of_corrected_integrands_1}, noting that instead of the bound in \eqref{eq:different_bound_than_second_tightness_result_corrected_integrands} we achieve
    \begin{align*}
        &\operatorname{inf}_{\lambda \in [0,1]} \, \big|H^{n_b}_t-\tilde{H}^{n_b,k,\ell_r}_t - \lambda (H^{n_b}_s - \tilde{H}^{n_b,k,\ell_r}_s) -(1-\lambda)(H^{n_b}_p -\tilde{H}^{n_b,k,\ell_r}_p)\big|  \\
        \le \; & \operatorname{inf}_{\lambda \in [0,1]} \, |H^{n_b}_t - \lambda H^{n_b}_{\tau^{n_b,k,\ell_r}_j-} -(1-\lambda)H^{n_b}_p|  \\
        \le \; &w'(H^{n_b},\delta/2+ 2|\Theta_{\ell_r}|)
    \end{align*}
    whenever $\tau^{n_b,k,\ell_r}_j \le s < \rho^{n_b,k,\ell_r}_j \le t < p < \tau^{n_b,k,\ell_r}_{j+1}$ for some $j\ge 1$ and $p-s\le \delta/2$, since $\rho^{n_b,k,\ell_r}_j-\tau^{n_b,k,\ell_r}_j\le |\Theta_{\ell_r}|$ and therefore $p-\tau^{n_b,k,\ell_r}_j<\delta/2+2|\Theta_{\ell_r}|$. Recall that $|\Theta_{\ell_r}|$ denotes the mesh size of the partition $\Theta_{\ell_r}$. In fact, all other cases for $s,t,p$ with $p-s\le \delta/2$ yield the same bound (again, the procedure is similar and we omit the details for the sake of brevity of this account). Thus, we obtain
    $$ w'\big(H^{n_b(k,\ell_r)}-\tilde H^{n_b(k,\ell_r),k,\ell_r},\delta/2\big) \; \le \; w'\big(H^{n_b(k,\ell_r)}, \delta\big)$$
    for $r$ large enough, due to $|\Theta_{\ell_r}|\to 0$ as $r\to \infty$.
\end{proof}

Again, by Prokhorov's theorem and Lemma \ref{lem:tightness_of_corrected_integrands_2}, to every subsequence of $\{\ell_r\}$ there exists a further subsequence---which, whenever there is no risk of ambiguity, we will denote as the original sequence---converging weakly in $(\D_{\R^{d}}[0,\infty), \dM)$ to some limit $Y^{\{n_m(k,\ell_r)\}, k, \{\ell_r\}}$, that is
\begin{align}
    Y^{\{n_m(k,\ell_r)\},k,\ell_r} \; \Rightarrow \; Y^{\{\{n_m(k,\ell_r)\}\}, k, \{\ell_r\}} \label{eq:convergence_2_tightness_of_corrected_integrands_1}
\end{align}
as $r\to \infty$. We will now show that the laws of $Y^{\{n_m(k,\ell_r)\}, k, \{\ell_r\}}$ and $H^0$ coincide and therefore the convergence in 
\eqref{eq:convergence_2_tightness_of_corrected_integrands_1} does not depend on the actual subsequence of $\{\ell_r\}$, yielding in fact
\begin{align}
    Y^{\{n_m(k,\ell_r)\},k,\ell_r} \; \Rightarrow \; H^0 \label{eq:convergence_3_tightness_of_corrected_integrands_1}
\end{align}
as $r \to \infty$ for any original sequence $\{\ell_r\}$. Recall that the Borel $\sigma$-algebra on the Skorokhod space is generated by the marginal time projections onto a dense subset of times, and the set $\bigcup_{\ell \ge 1} \Theta_\ell$ is dense in $[0,\infty)$.

\begin{lem}
For any $k\ge 1$ and limit $Y^{\{n_m(k,\ell_r)\},k,\{\ell_r\}}$ from \eqref{eq:convergence_2_tightness_of_corrected_integrands_1} the finite-dimensional distributions of $Y^{\{n_m(k,\ell_r)\},k,\{\ell_r\}}$ and $H^0$ coincide on $\Theta_\ell$, that is 
\begin{align}
    \mathcal{L}\Big(Y^{\{n_m(k,\ell_r)\},k,\{\ell_r\}}_{\nu_1},...,Y^{\{n_m(k,\ell_r)\},k,\{\ell_r\}}_{\nu_r}\Big) \; = \; \mathcal{L}\big(H^0_{\nu_1}, ..., H^0_{\nu_r}\big)
\end{align}
for all $p\ge 1$ and $\nu_1,...,\nu_p \in \bigcup_{\ell\ge 1}\Theta_\ell$.
\end{lem}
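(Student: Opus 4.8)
The plan is to track the two weak limits \eqref{eq:convergence_tightness_of_corrected_integrands_1} and \eqref{eq:convergence_2_tightness_of_corrected_integrands_1} through a diagonal argument, exploiting that the corrected processes $H^n-\tilde H^{n,k,\ell}$ coincide with $H^n$ at the points of $\Theta_\ell$ and that, as $\ell\to\infty$, they become regular at every point of $\bigcup_\ell\Theta_\ell$. Fix $p\ge1$ and $\nu_1,\dots,\nu_p\in\bigcup_{\ell\ge1}\Theta_\ell$. Since $\{\Theta_\ell\}$ is nested, there is $\ell_0$ with $\{\nu_1,\dots,\nu_p\}\subseteq\Theta_{\ell_0}$, hence $\subseteq\Theta_\ell$ for all $\ell\ge\ell_0$; as $\ell_r\to\infty$ we get $\nu_i\in\Theta_{\ell_r}$ for all large $r$. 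Exactly as in the proof of Lemma \ref{lem:tightness_of_corrected_integrands_2} (metrizability of weak convergence and a diagonal choice), I would pick indices $m(r)$ such that, writing $Z_r:=H^{n_{m(r)}(k,\ell_r)}-\tilde H^{n_{m(r)}(k,\ell_r),k,\ell_r}$, one has $Z_r\Rightarrow Y^{\{n_m(k,\ell_r)\},k,\{\ell_r\}}$ on $(\D_{\R^d}[0,\infty),\dM)$ and $n_{m(r)}(k,\ell_r)\to\infty$ as $r\to\infty$.

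Next I identify the time-marginals along $Z_r$. By \eqref{eq:definition_correction_processes} we have $\tilde H^{n,k,\ell_r}_\nu=0$ for all $\nu\in\Theta_{\ell_r}$, so $Z_r(\nu_i)=H^{n_{m(r)}(k,\ell_r)}(\nu_i)$ for large $r$. Each $\nu_i\in\Theta_{\ell_r}\subseteq[0,\infty)\setminus\operatorname{Disc}_{\Pro^0}(H^0,X^0)\subseteq[0,\infty)\setminus\operatorname{Disc}_{\Pro^0}(H^0)$, so $H^0$ is $\Pro^0$-a.s.\ continuous at each $\nu_i$, whence the multi-time evaluation $x\mapsto(x(\nu_1),\dots,x(\nu_p))$ is a.s.\ continuous for $\mathcal{L}(H^0)$ on $(\D_{\R^d}[0,\infty),\dM)$ (see \cite[Thm.~12.5.1]{whitt}). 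Combined with $H^n\Rightarrow H^0$ and the continuous mapping theorem this yields $(Z_r(\nu_1),\dots,Z_r(\nu_p))\Rightarrow(H^0_{\nu_1},\dots,H^0_{\nu_p})$ as $r\to\infty$.

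The crux is to show that $Y^{\{n_m(k,\ell_r)\},k,\{\ell_r\}}$ is a.s.\ continuous at each $\nu_i$, which I would reduce to the estimate $\lim_{\delta\downarrow0}\limsup_{r\to\infty}\Pro(u(Z_r,\nu_i,\delta)>\eta)=0$ for every $\eta>0$ (the oscillation functional $u(\cdot,\nu_i,\delta)$ is lower semicontinuous under $M_1$ convergence, so via Skorokhod's representation and Fatou this forces $|\Delta Y^{\{n_m(k,\ell_r)\},k,\{\ell_r\}}_{\nu_i}|=0$ a.s.). To get it, observe from \eqref{eq:definition_correction_processes} that on $[\nu_i-\delta,\nu_i+\delta]$ the path $Z_r$ is — up to continuous transitions at the $\tau^{n_{m(r)},k,\ell_r}_j$ — either a piece of $H^{n_{m(r)}(k,\ell_r)}$ or a constant $H^{n_{m(r)}(k,\ell_r)}_{\tau^{n_{m(r)},k,\ell_r}_j-}$, unless some $\rho^{n_{m(r)},k,\ell_r}_j$ lies in $[\nu_i-\delta,\nu_i+\delta]$; since $\rho^{n,k,\ell_r}_j\in\Theta_{\ell_r}$ and $|\Theta_{\ell_r}|\to0$, that event forces $H^{n_{m(r)}(k,\ell_r)}$ to oscillate by more than $a_k$ over a window of length $|\Theta_{\ell_r}|$ contained in $[\nu_i-2\delta,\nu_i+\delta]$ for large $r$. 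Hence, for large $r$,
\[
\Pro\big(u(Z_r,\nu_i,\delta)>\eta\big)\ \le\ \Pro\big(u(H^{n_{m(r)}(k,\ell_r)},\nu_i,\delta)>\tfrac{\eta}{2}\big)+\Pro\big(u(H^{n_{m(r)}(k,\ell_r)},\nu_i,2\delta)>a_k\big),
\]
and both terms vanish in $\lim_{\delta\downarrow0}\limsup_{r\to\infty}$ by the local uniform tightness \eqref{eq:local_uniform_tightness_at_continuity_points_corrected_integrands_1}, available because $\nu_i\notin\operatorname{Disc}_{\Pro^0}(H^0)$.

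With a.s.\ continuity of $Y^{\{n_m(k,\ell_r)\},k,\{\ell_r\}}$ at each $\nu_i$, the evaluation $x\mapsto(x(\nu_1),\dots,x(\nu_p))$ is a.s.\ continuous for its law, so the continuous mapping theorem applied to $Z_r\Rightarrow Y^{\{n_m(k,\ell_r)\},k,\{\ell_r\}}$ gives $(Z_r(\nu_1),\dots,Z_r(\nu_p))\Rightarrow(Y^{\{n_m(k,\ell_r)\},k,\{\ell_r\}}_{\nu_1},\dots,Y^{\{n_m(k,\ell_r)\},k,\{\ell_r\}}_{\nu_p})$; comparing with the marginal limit from the second step and using uniqueness of weak limits yields the claimed equality of finite-dimensional distributions. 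The main obstacle is precisely the third step: for a fixed level $\ell$ the corrected process $H^n-\tilde H^{n,k,\ell}$ may genuinely jump at a point $\nu_i\in\Theta_\ell$ (namely when a rounded stopping time $\rho^{n,k,\ell}_j$ coincides with $\nu_i$), so the needed regularity at $\nu_i$ is only recovered after sending $\ell_r\to\infty$, and making this quantitative — i.e.\ the uniform-in-$r$ oscillation bound above — is where the vanishing mesh of $\Theta_{\ell_r}$ and the avoidance $\Theta_\ell\cap\operatorname{Disc}_{\Pro^0}(H^0,X^0)=\emptyset$ are essential.
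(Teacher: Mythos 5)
Your proposal is correct in substance, but it organises the key identification step differently from the paper. Both arguments start from the same diagonal sequence $Z_r=H^{n_{m(r)}(k,\ell_r)}-\tilde H^{n_{m(r)}(k,\ell_r),k,\ell_r}\Rightarrow Y^{\{n_m(k,\ell_r)\},k,\{\ell_r\}}$, use that the corrected processes agree with $H^{n}$ on $\Theta_{\ell_r}$, and ultimately rest on the local oscillation control \eqref{eq:local_uniform_tightness_at_continuity_points_corrected_integrands_1} at the points $\nu_i\notin\operatorname{Disc}_{\Pro^0}(H^0)$. The difference is where the oscillation control is applied: the paper never proves that the limit $Y$ is continuous at the $\nu_i$; instead it evaluates at auxiliary times $t^q_j\downarrow\nu_j$ chosen in the cocountable set of common continuity points of $Y$ and $H^0$, compares $\E[f_{t^q}(\cdot)]$ and $\E[f_\nu(\cdot)]$ along the prelimit via the bound $|\hat H^{r}_{t^q_j}-H^{n_b}_{\nu_j}|\le u(H^{n_b},\nu_j,t^q_j-\nu_j)$ for Lipschitz test functions, and then lets $q\to\infty$ using right-continuity of $Y$ and dominated convergence. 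You instead prove directly that $Y$ is $\Pro$-a.s.\ continuous at each $\nu_i$, by bounding $u(Z_r,\nu_i,\delta)$ through $u(H^{n},\nu_i,\cdot)$ off the event that some $\rho^{n,k,\ell_r}_j$ falls into the window (whose probability is controlled by an oscillation of size $\ge a_k$ of $H^n$ near $\nu_i$, using $|\Theta_{\ell_r}|\to0$), and transferring this to the limit by the $M_1$ lower semicontinuity of the jump at $\nu_i$ (Skorokhod representation plus Fatou); you then apply the evaluation continuous mapping theorem on both sides and uniqueness of weak limits. This is a legitimate alternative: it buys the slightly stronger intermediate conclusion that $Y$ has a.s.\ no jump at grid points, at the cost of the more delicate window analysis of the correction intervals, whereas the paper's right-approximation argument avoids any statement about $\operatorname{Disc}(Y)$ at the $\nu_i$ altogether. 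Two cosmetic points to tighten: your exceptional-event bound should use a threshold such as $a_k/2$ (or ``$\ge a_k$''), since the definition of $\tau^{n,k,\ell}_j$ as an infimum only guarantees oscillation at least $a_k$ in the closed window; and the case where a stopping time $\tau^{n,k,\ell_r}_j$ lands exactly on a grid point $\nu_i$ (so that $Z_r(\nu_i)=H^{n}_{\nu_i-}$ rather than $H^{n}_{\nu_i}$) deserves a remark, though the paper's own proof makes the same identification and the discrepancy is in any case absorbed by $u(H^{n},\nu_i,\delta)$.
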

\begin{proof}
    Let $k,p\ge 1$ and $\nu_1,...,\nu_p \in \Theta_{\ell}$, for some $\ell\ge 1$. In order to lighten notation, we will simply write $Y$ to denote $Y^{\{n_m(k,\ell_r)\},k,\{\ell_r\}}$. By \eqref{eq:convergence_tightness_of_corrected_integrands_1}, \eqref{eq:convergence_2_tightness_of_corrected_integrands_1}, and since weak convergence on the $M_1$ space is metrizeable, again there exist $b:=b(r)$ such that $n_b(k,\ell_r) \to \infty$ as $r\to \infty$, and 
     $$ \hat{H}^{r} \; := \; H^{n_b(k,\ell_r)}-\tilde{H}^{n_b(k,\ell_r),k,\ell_r} \; \Rightarrow \; Y$$
     on $(\D_{\R^{d}}[0,\infty), \dM)$ as $r\to \infty$. Moreover, in order to further ease notation, we will suppress the dependence of $n_b(k,\ell_r)$ on $k,r$ and simply write $n_b$ in the sequel. Recall that $D:= [0,\infty) \setminus (\operatorname{Disc}_{\Pro}(Y) \,\cup \,\operatorname{Disc}_{\Pro^0}(H^0))$ is cocountable and choose a sequence of times $\{(t^q_1,...,t^q_p)\}_{q\ge 1}$ such that $(t^q_1,...,t^q_p) \in D^p$, $t^{q}_j \downarrow \nu_j$ as $q\to \infty$, and $t^q_j\neq \nu_j$ for every $j=1,...,p$. Let $f:\R^{pd} \to \R$ be Lipschitz and bounded, and denote $f_\nu=f\circ \pi_{\nu_1,...,\nu_p}$, $f_{t^q}=f\circ \pi_{t^q_1,...,t^q_p}$, where $\pi_{\alpha_1,...,\alpha_p}(x):=(x_{\alpha_1},...,x_{\alpha_p})$ is the coordinate projection at times $(\alpha_1,...,\alpha_p)$. Then, we obtain the bound
    \begin{align}
        \big| \E[f_\nu(Y)] -  \E^{0}[f_\nu(H^0)] \big| \; \le \; &\big| \E[f_\nu(Y)] -  \E[f_{t^q}(Y)] \big|  +  \big| \E[f_{t^q}(Y)] -  \E^{n_b}[f_{t^q}(\hat{H}^{r})] \big| \notag \\ &+ \big| \E^{n_b}[f_{t^q}(\hat{H}^{r})] -  \E^{n_b}[f_{\nu}(\hat{H}^{r})] \big|  + \big| \E^{n_b}[f_{\nu}(\hat{H}^{r})] -  \E^{0}[f_\nu(H^0)] \big| \label{eq:auxiliary_equation_equal_finite_dimensional_distributions_tightness_of_corrected_integrands_1}
    \end{align}
    for all $r,q\ge 1$. Since $f_{\nu}(\hat{H}^{r})=f_\nu(H^{n_b})$ by definition of the processes $\hat{H}^r$, and $\nu_j \in [0,\infty)\setminus \operatorname{Disc}_{\Pro^0}(H^0)$, $j=1,...,p$, the generalised continuous mapping theorem \cite[Thm.~2.7]{billingsley} yields that the last term on the right side of \eqref{eq:auxiliary_equation_equal_finite_dimensional_distributions_tightness_of_corrected_integrands_1} converges to zero as $r\to \infty$ (and therefore $n_b \to \infty$), as a consequence of $H^{n_b} \Rightarrow H^0$ on $(\D_{\R^d}[0,\infty), \dM)$ and \cite[Thm.~12.5.1(v)]{whitt}. By the same reasoning, the second term on the right-hand side of \eqref{eq:auxiliary_equation_equal_finite_dimensional_distributions_tightness_of_corrected_integrands_1} converges to zero as $r\to \infty$, for each $q\ge 1$. The first term simply follows by dominated convergence and the right-continuity of $Y$. Finally, the third term can be estimated by 
    \begin{align*}
        \big| \E^{n_b}[f_{t^q}(\hat{H}^{r})] -  \E^{n_b}[f_{\nu}(\hat{H}^{r})] \big| \; &\le \; \norm{f}_{\operatorname{Lip}} \E^{n_b}\big[|H^{n_b}_\nu - \hat{H}^{r}_{t^q}| \wedge 1\big] 
    \end{align*}
    where $\norm{f}_{\operatorname{Lip}}$ is the maximum of the Lipschitz constant and bound of $f$. By definition of the processes $\hat{H}^r$ (and more precisely \eqref{eq:definition_correction_processes}), note that $|\hat H^{n_b}_{\nu_j} - \hat{H}^{r}_{t^q_j}|\le u(H^{n_b}, \nu_j, t^q_j-\nu_j)$, where $u$ is defined as in \eqref{it3:tightness_of_corrected_integrands_1} in the proof of Lemma \ref{lem:tightness_of_corrected_integrands_1}. Hence, for every $\varepsilon>0$,
    \begin{align*}
        \big| \E^{n_b}[f_{t^q}(\hat{H}^{r})] -  \E^{n_b}[f_{\nu}(\hat{H}^{r})] \big| \; &\le \; \norm{f}_{\operatorname{Lip}} \Big(\varepsilon \; + \; \sum_{j=1}^{p} \Pro^{n_b} \big(u(H^{n_b},\nu_j, |t^q-\nu|) > \varepsilon \big)\Big),
    \end{align*}
    and, by \eqref{eq:local_uniform_tightness_at_continuity_points_corrected_integrands_1} we obtain 
    $$ \lim_{q\to \infty} \, \limsup_{r\to \infty} \, \big| \E^{n_b}[f_{t^q}(\hat{H}^{r})] -  \E^{n_b}[f_{\nu}(\hat{H}^{r})] \big| \; = \; 0$$
    as $\varepsilon>0$ was arbitrary. Thus, we obtain that the left-hand side of \eqref{eq:auxiliary_equation_equal_finite_dimensional_distributions_tightness_of_corrected_integrands_1} equals zero, which yields the claim.
\end{proof}

We can now deduce the following weak limit result.

\begin{prop} \label{prop:convergence_of_corrected_integrands}
    For all subsequences of the natural numbers $\{k_p\}$, $\{\ell_r\}$ and $\{n_m\}$, there exist further subsequences $\{\ell_{r(p)}\}$, $\{n_{m(p)}\}$ of $\{\ell_r\}$ and $\{n_m\}$ respectively such that 
    \begin{align}
    H^{n_{m(p)}} \, - \, \tilde{H}^{n_{m(p)},k_p,\ell_{r(p)}} \; \Rightarrow \; H^0 \label{eq:convergence_of_corrected_integrands}
\end{align}
on $(\D_{\R^{d}}[0,\infty), \dM)$ as $p\to \infty$.
\end{prop}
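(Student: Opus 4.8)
The plan is to derive \eqref{eq:convergence_of_corrected_integrands} by a diagonal argument that interleaves the two limiting procedures already at our disposal: for fixed $k,\ell$ the weak convergence \eqref{eq:convergence_tightness_of_corrected_integrands_1} as $n\to\infty$, and for fixed $k$ the weak convergence \eqref{eq:convergence_3_tightness_of_corrected_integrands_1} as $r\to\infty$ along the given sequence $\{\ell_r\}$. First I would fix a metric $d_w$ generating the weak topology on Borel probability measures over the separable space $(\D_{\R^d}[0,\infty),\dM)$ (such a metric exists, as already used in the proof of Lemma \ref{lem:tightness_of_corrected_integrands_2}) and set $\mu_0:=\mathcal L(H^0)$.

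Given the subsequences $\{k_p\},\{\ell_r\},\{n_m\}$, I would construct $\{\ell_{r(p)}\},\{n_{m(p)}\}$ by induction on $p$, starting from $r(0):=0$, $m(0):=0$. At step $p$ one commits to the single value $k=k_p$. Applying the construction around \eqref{eq:convergence_tightness_of_corrected_integrands_1} to the subsequence $\{n_m\}$ produces, for each $\ell$, a subsequence $\{n_m(k_p,\ell)\}$ of $\{n_m\}$ together with the limit $Y^{\{n_m(k_p,\ell)\},k_p,\ell}$; Lemma \ref{lem:tightness_of_corrected_integrands_2} and the discussion culminating in \eqref{eq:convergence_3_tightness_of_corrected_integrands_1} then give $Y^{\{n_m(k_p,\ell_r)\},k_p,\ell_r}\Rightarrow H^0$ as $r\to\infty$. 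Hence one may pick $r(p)>r(p-1)$ with
\[ d_w\big(\mathcal L\big(Y^{\{n_m(k_p,\ell_{r(p)})\},\,k_p,\,\ell_{r(p)}}\big),\,\mu_0\big)\;<\;\tfrac1p. \]
Invoking \eqref{eq:convergence_tightness_of_corrected_integrands_1} once more for the now fixed pair $(k_p,\ell_{r(p)})$, the laws of $H^n-\tilde H^{n,k_p,\ell_{r(p)}}$, with $n$ ranging over the infinite set $\{n_m(k_p,\ell_{r(p)})\}\subseteq\{n_m\}$, converge weakly to $\mathcal L(Y^{\{n_m(k_p,\ell_{r(p)})\},k_p,\ell_{r(p)}})$, so one may select from that set an element $n_{m(p)}$ larger than $n_{m(p-1)}$ with
\[ d_w\big(\mathcal L\big(H^{n_{m(p)}}-\tilde H^{n_{m(p)},k_p,\ell_{r(p)}}\big),\,\mathcal L\big(Y^{\{n_m(k_p,\ell_{r(p)})\},\,k_p,\,\ell_{r(p)}}\big)\big)\;<\;\tfrac1p. \]
The triangle inequality for $d_w$ then gives $d_w\big(\mathcal L(H^{n_{m(p)}}-\tilde H^{n_{m(p)},k_p,\ell_{r(p)}}),\,\mu_0\big)<2/p$, and since $\{\ell_{r(p)}\}$ and $\{n_{m(p)}\}$ are by construction subsequences of $\{\ell_r\}$ and $\{n_m\}$, letting $p\to\infty$ yields exactly \eqref{eq:convergence_of_corrected_integrands}.

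I do not expect a genuine analytic obstacle: all substance is contained in Lemmas \ref{lem:tightness_of_corrected_integrands_1}--\ref{lem:tightness_of_corrected_integrands_2} and the fdd identification preceding this proposition. The one point demanding care is the coherence of the three-index diagonalization — the auxiliary subsequences $\{n_m(k,\ell)\}$ depend on $(k,\ell)$, so one must commit to $(k_p,\ell_{r(p)})$ \emph{before} extracting $n_{m(p)}$, and use that $\{n_m(k_p,\ell_{r(p)})\}$ is infinite to keep the chosen indices strictly increasing so that genuine subsequences result. Everything else is a routine $d_w$-triangle-inequality estimate made legitimate by the metrizability of weak convergence on the separable $M_1$ Skorokhod space.
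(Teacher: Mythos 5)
Your proof is correct and takes essentially the same approach as the paper: the paper simply invokes metrizability of weak convergence together with a diagonal sequence argument based on \eqref{eq:convergence_tightness_of_corrected_integrands_1} and \eqref{eq:convergence_3_tightness_of_corrected_integrands_1}, and you have filled in exactly that diagonalization, correctly noting that one must commit to $(k_p,\ell_{r(p)})$ before extracting $n_{m(p)}$ from the $(k_p,\ell_{r(p)})$-dependent subsequence and that the chosen indices can be kept strictly increasing because each auxiliary subsequence is infinite.
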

\begin{proof}
    Since the topology of weak convergence on $(\D_{\R^{d}}[0,\infty), \dM)$ is metrizeable, the result follows directly from  \eqref{eq:convergence_3_tightness_of_corrected_integrands_1} and \eqref{eq:convergence_tightness_of_corrected_integrands_1} by making use of a diagonal sequence argument. 
\end{proof}

\begin{lem}[An \eqref{eq:oscillcond}-type result for the approximations] \label{lem:AVCI_replacement_for_corrected_integrands_without_AVCI}
For all $\gamma>0$ and $T>0$ it holds that
\begin{align*}
    \lim\limits_{k\to \infty} \;  \sup{\ell \ge 1}\;\limsup\limits_{\delta \downarrow 0} \; \limsup\limits_{n\to \infty} \; \; \Pro^n \Big( \hat{w}^T_\delta(H^n-\tilde{H}^{n,k,\ell}, X^n) \, > \, \gamma \Big) \; = \; 0
\end{align*}
where $\hat{w}^T_\delta$ is defined in \eqref{defi:AVCI_modulus_of_continuity}.
\end{lem}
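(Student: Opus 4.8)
The plan is to reduce the claim to a pathwise estimate combined with the local uniform tightness of $\{X^n\}$ at continuity points of $X^0$. Precisely, I would first show that, on the $\Pro^n$-almost sure event that $[0,T]$ contains only finitely many of the stopping times $\tau^{n,k,\ell}_j$, one has for every $\delta<|\Theta_\ell|^{\downarrow}$
\begin{align*}
\hat{w}^T_\delta\bigl(H^n-\tilde{H}^{n,k,\ell},\,X^n\bigr)\;\le\; a_k\;\vee\;\max{\nu\in\Theta_\ell\cap[0,T)}\;u\bigl(X^n,\nu,\delta\bigr),
\end{align*}
where $u$ is the oscillation functional introduced in the proof of Lemma \ref{lem:tightness_of_corrected_integrands_1}. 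Granting this estimate, the lemma follows quickly: for any $k$ with $a_k<\gamma$, any $\ell\ge1$ and any $\delta<|\Theta_\ell|^{\downarrow}$,
\begin{align*}
\Pro^n\bigl(\hat{w}^T_\delta(H^n-\tilde{H}^{n,k,\ell},X^n)>\gamma\bigr)\;\le\;\sum_{\nu\in\Theta_\ell\cap[0,T)}\Pro^n\bigl(u(X^n,\nu,\delta)>\gamma\bigr),
\end{align*}
and, since $|\Theta_\ell|^{\downarrow}>0$ makes $\Theta_\ell\cap[0,T)$ finite, I would push $\limsup_{n\to\infty}$ and then $\limsup_{\delta\downarrow0}$ through this finite sum and invoke the local uniform tightness \eqref{eq:local_uniform_tightness_at_continuity_points_corrected_integrands_1}, applied to $X^n$ in place of $H^n$ at the points $\nu\in\Theta_\ell\subseteq[0,\infty)\setminus\operatorname{Disc}_{\Pro^0}(X^0)$ --- which is legitimate because $X^n\Rightarrow X^0$ on $(\D_{\R^d}[0,\infty),\dM)$, via the same Skorokhod representation plus \cite[Thm.~12.5.1(v)]{whitt} argument. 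This gives $\sup{\ell\ge1}\limsup_{\delta\downarrow0}\limsup_{n\to\infty}\Pro^n\bigl(\hat{w}^T_\delta(H^n-\tilde{H}^{n,k,\ell},X^n)>\gamma\bigr)=0$ for every $k$ with $a_k<\gamma$, and letting $k\to\infty$ (so that eventually $a_k<\gamma$) finishes the proof.

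To establish the pathwise estimate, I would fix $\omega$ in the good event and an admissible triple $0\le s<t<u\le(s+\delta)\wedge T$, and recall that the corrected path $H^n-\tilde{H}^{n,k,\ell}$ coincides with $H^n$ outside $\bigcup_j[\tau^{n,k,\ell}_j,\rho^{n,k,\ell}_j)$, is constant equal to $H^n_{\tau^{n,k,\ell}_j-}$ on each of those intervals, and hence jumps only at the partition points $\rho^{n,k,\ell}_j$. Since $\delta<|\Theta_\ell|^{\downarrow}$ is strictly below the minimal gap between consecutive $\rho^{n,k,\ell}_j$, the interval $(s,t]$ contains at most one of them. If it contains none, then $[s,t]$ lies inside a single block $[\rho^{n,k,\ell}_{j-1},\rho^{n,k,\ell}_j)$, on which the corrected path is a piece of $H^n$ contained in $[\rho^{n,k,\ell}_{j-1},\tau^{n,k,\ell}_j)$ possibly followed by a constant piece; the defining property of $\tau^{n,k,\ell}_j$ --- that for $t'<\tau^{n,k,\ell}_j$ no coordinate increment of $H^n$ from a time in $[\rho^{n,k,\ell}_{j-1}\vee(t'-|\Theta_\ell|),t']$ up to $t'$ exceeds $a_k$, which in the limit $t'\uparrow\tau^{n,k,\ell}_j$ also controls $|H^{n,(i)}_{\tau^{n,k,\ell}_j-}-H^{n,(i)}_s|$ --- then yields $|(H^n-\tilde{H}^{n,k,\ell})^{(i)}_s-(H^n-\tilde{H}^{n,k,\ell})^{(i)}_t|\le a_k$ for every $i$, so this triple contributes at most $a_k$ to $\hat{w}^T_\delta$. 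If $(s,t]$ contains exactly one $\rho^{n,k,\ell}_j$, then $s<\rho^{n,k,\ell}_j\le t<u\le s+\delta\le\rho^{n,k,\ell}_j+\delta$ together with $u\le T$ forces $t,u\in[\rho^{n,k,\ell}_j,(\rho^{n,k,\ell}_j+\delta)\wedge T]$ while $\rho^{n,k,\ell}_j\in\Theta_\ell\cap[0,T)$, so $|X^{n,(i)}_t-X^{n,(i)}_u|\le u(X^n,\rho^{n,k,\ell}_j,\delta)\le\max{\nu\in\Theta_\ell\cap[0,T)}u(X^n,\nu,\delta)$, i.e.\ this triple contributes at most that maximum. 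Taking the supremum over admissible triples gives the estimate.

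I expect the main obstacle to be the case analysis for the corrected path in the ``$(s,t]$ contains no $\rho^{n,k,\ell}_j$'' situation: one has to treat separately $[s,t]$ lying entirely in the $H^n$-part $[\rho^{n,k,\ell}_{j-1},\tau^{n,k,\ell}_j)$, entirely in the frozen part $[\tau^{n,k,\ell}_j,\rho^{n,k,\ell}_j)$, and straddling $\tau^{n,k,\ell}_j$; in the straddling sub-case the bound is obtained only after passing to the left limit $H^n_{\tau^{n,k,\ell}_j-}$ through the infimum characterisation of $\tau^{n,k,\ell}_j$. The restriction $\delta<|\Theta_\ell|^{\downarrow}$ and the special convention built into $\lceil\cdot\rceil_{\Theta_\ell}$ for arguments already in $\Theta_\ell$ are precisely what keep $[s,t]$ inside a single block here. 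The rest is routine: the reduction in the first paragraph is bookkeeping, and the appeal to local uniform tightness at continuity points of $X^0$ is exactly the device already used to derive \eqref{eq:local_uniform_tightness_at_continuity_points_corrected_integrands_1}.
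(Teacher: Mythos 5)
Your proposal is correct and takes essentially the same approach as the paper: both arguments hinge on the observation that a $\hat w^T_\delta$-triple with corrected-integrand increment exceeding $a_k$ forces a partition point $\rho^{n,k,\ell}_j\in\Theta_\ell\cap[0,T)$ into $(s,t]$, after which the $X^n$-increment is absorbed into the oscillation modulus $u(X^n,\nu,\cdot)$ at a continuity point of $X^0$ and killed by local uniform $M_1$-tightness. You package the case analysis as an explicit pathwise bound $\hat w^T_\delta\le a_k\vee\max_{\nu}u(X^n,\nu,\delta)$ while the paper runs the same dichotomy as a proof by contradiction (and uses $u(X^n,\nu,2\delta)$), but these are cosmetic differences.
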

\begin{proof}
Denote $\hat{H}^{n,k,\ell}:= H^n -\tilde{H}^{n,k,\ell}$. Let $\gamma, \varepsilon>0$, $T>0$ and choose $k\ge 1$ such that $a_k<\gamma$. Now, fix $\ell \ge 1$ and choose $\delta:=\delta(\gamma,\ell)>0$ small enough that  
$$ \limsup\limits_{n\to \infty} \, \Pro^n\big(\operatorname{max}_{\nu \in \Theta_\ell \cap [0,T]} u(X^n,\nu, 2\delta) > \gamma \big) \, < \, \varepsilon, $$
where $u$ is defined as in \eqref{it3:tightness_of_corrected_integrands_1} in the proof of Lemma \ref{lem:tightness_of_corrected_integrands_1}. The existence of such $\delta$ can be deduced as in \eqref{eq:local_uniform_tightness_at_continuity_points_corrected_integrands_1}, due to $X^n \Rightarrow X^0$ on $(\D_{\R^d}[0,\infty), \dM)$ and $\Theta_\ell\cap [0,T] \subseteq [0,\infty) \setminus \operatorname{Disc}_{\Pro^0}(X^0)$ being finite. Consider $\omega \in \{\hat{w}^T_\delta(\hat{H}^{n,k,\ell}, X^n) > \gamma\}$. Then, there exist random times $0\le s(\omega) \le t(\omega) \le p(\omega) \le T$, satisfying $p(\omega)-t(\omega) \le \delta$, $t(\omega)-s(\omega) \le \delta$, and 
\begin{align} \big|\hat{H}^{n,k,\ell}_{s(\omega)} (\omega)-\hat{H}^{n,k,\ell}_{t(\omega)}(\omega)\big| \, \wedge \, \big|X^n_{t(\omega)}(\omega) - X^n_{p(\omega)}(\omega)\big| \; > \; \gamma \; > \; a_k. \label{eq:some_eq_in_the_proof_of_AVCI_type_result}
\end{align}
Note that there must be $j\ge 1$ so that $s(\omega)< \rho^{n,k,\ell}_j(\omega) \le t(\omega)$. Indeed, otherwise, we have $\rho^{n,k,\ell}_j(\omega) \le s(\omega)< t(\omega) < \rho^{n,k,\ell}_{j+1}(\omega)$ for some $j\ge 1$. By definition of the stopping times $\rho^{n,k,\ell}_j$ before \eqref{eq:definition_correction_processes}, \eqref{eq:some_eq_in_the_proof_of_AVCI_type_result}, and the definition of the processes $\hat{H}^{n,k,\ell}$ in \eqref{eq:definition_correction_processes}, it follows $\rho^{n,k,\ell}_j(\omega) \le s(\omega)< \tau_{j+1}^{n,k,\ell}(\omega)\le t(\omega)< \rho^{n,k,\ell}_{j+1}(\omega)$. However, this implies 
$$ |\hat{H}^{n,k,\ell}_{s(\omega)} (\omega)-\hat{H}^{n,k,\ell}_{t(\omega)}(\omega)|\; = \; |H^{n}_{s(\omega)} (\omega)-H^{n}_{\tau^{n,k,\ell}(\omega)-}(\omega)| \; < \; a_k,$$
which yields a contradiction.
Thus, there exists $j\ge 1$ such that $s(\omega)< \rho^{n,k,\ell}_j(\omega) \le t(\omega) < p(\omega)$. Since $\rho^{n,k,\ell}_j(\omega) \in \Theta_\ell$ and $p(\omega)-s(\omega)<2\delta$, 
$$ \operatorname{min}_{\nu \in \Theta_\ell\cap [0,T]} \big( |p(\omega)- \nu| \vee |t(\omega)-\nu|\big) \; < \; 2\delta,$$
and therefore $\omega \in \{\operatorname{max}_{\nu \in \Theta_\ell \cap [0,T]} u(X^n,\nu, 2\delta) > \gamma\}$, since we recall $|X^n_{t(\omega)}(\omega)-X^n_{p(\omega)}(\omega)|>\gamma$. Finally, we deduce 
$$ \limsup\limits_{n\to \infty} \, \Pro^n \Big( \hat{w}^T_\delta(H^n-\tilde{H}^{n,k,\ell}, X^n) \, > \, \gamma \Big) \; \le \; \limsup\limits_{n\to \infty} \, \Pro^n\big(\operatorname{max}_{\nu \in \Theta_\ell \cap [0,T]} u(X^n,\nu, 2\delta) > \gamma \big) \, < \, \varepsilon. $$
As $\varepsilon,\gamma >0$ were arbitrary, this yields the result.
\end{proof}

Finally, the prior considerations allow us to establish the following weak convergence results with respect to the corrected integrands. 
\begin{prop} \label{cor:convergence_without_AVCI_for_corrected_integrands}
Let $\{k_p\}$, $\{n_m\}$ and $\{\ell_r\}$ be subsequences of the natural numbers. Then, there exist further subsequences $\{n_{m(p)}\}$, $\{\ell_{r(p)}\}$ of $\{n_m\}$ and $\{\ell_r\}$ respectively such that 
    \begin{align}
		\Big( H^{n_{m(p)}},\,X^{n_{m(p)}}, \, \int_0^\bullet \, H^{n_{m(p)}}_{s-} \, &- \, \tilde{H}^{n_{m(p)},k_p,\ell_{r(p)}}_{s-} \, \diff X_s^{n_{m(p)}} \Big)\; \Rightarrow \; \Big(  H^0,\,X^0, \, \int_0^\bullet \, H^0_{s-} \, \diff X^0_s \Big)\label{eq:convergence_without_AVCI_for_corrected_integrands}
	\end{align}
    on $(\D_{\R^{d}}[0,\infty), \, \dM)^3$ as $p\to \infty$.
\end{prop}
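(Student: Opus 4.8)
The plan is to derive \eqref{eq:convergence_without_AVCI_for_corrected_integrands} by substituting the corrected integrands $H^n-\tilde H^{n,k,\ell}$ into the classical weak-continuity theorem for stochastic integrals under joint Skorokhod convergence together with \eqref{eq:Mn_An_condition} and \eqref{eq:oscillcond} (in the guise of \cite[(AVCI) \& Thm.~3.6]{andreasfabrice_theorypaper}). Indeed, Proposition~\ref{prop:convergence_of_corrected_integrands} brings the corrected integrands back to $H^0$, Lemma~\ref{lem:AVCI_replacement_for_corrected_integrands_without_AVCI} makes them behave, relative to the integrators, like an \eqref{eq:oscillcond}-abiding sequence once $k$ is large, and \eqref{eq:Mn_An_condition}---being stable under passing to subsequences---survives along the integrators. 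The output is the limiting integral $\int_0^\bullet H^0_{s-}\,\diff X^0_s$ with \emph{no} correction term, the correction being carried entirely by the complementary pieces $\int_0^\bullet\tilde H^{n,k,\ell}_{s-}\,\diff X^n_s$ that are treated in Section~\ref{sec:proof_dealing_with_the_remainder_integrals}.

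Concretely, I would first extract a single diagonal subsequence $\{(n_{m(p)},k_p,\ell_{r(p)})\}$, with $\ell_{r(p)}\to\infty$, along which simultaneously (i) $\hat H^{(p)}:=H^{n_{m(p)}}-\tilde H^{n_{m(p)},k_p,\ell_{r(p)}}\Rightarrow H^0$, as produced by the construction behind Proposition~\ref{prop:convergence_of_corrected_integrands}, and (ii) for some scale $\delta_p\downarrow 0$ and all $T,\gamma>0$ in fixed countable families, $\limsup_{p\to\infty}\Pro^{n_{m(p)}}\!\big(\hat{w}^T_{\delta_p}(\hat H^{(p)},X^{n_{m(p)}})>\gamma\big)=0$. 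Property (ii) comes from Lemma~\ref{lem:AVCI_replacement_for_corrected_integrands_without_AVCI}: for each $p$ I fix $k_p$, then pick $\ell_{r(p)}$ large (for the $\ell$-limit feeding (i)), then $\delta_p\le 1/p$ below the $\delta$-threshold that Lemma~\ref{lem:AVCI_replacement_for_corrected_integrands_without_AVCI} supplies for the pair $(k_p,\ell_{r(p)})$, and finally $n_{m(p)}$ large enough to realise the preceding approximations within $2^{-p}$; the residual probability tends to $0$ because $k_p\to\infty$.

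Next I would upgrade the marginal weak limits to a joint one: the triple $(H^{n_{m(p)}},X^{n_{m(p)}},\hat H^{(p)})$ is tight on $(\D_{\R^d}[0,\infty),\dM)^3$ by marginal tightness (Lemma~\ref{lem:tightness_of_corrected_integrands_1} for the last coordinate), and every subsequential limit must be $(H^0,X^0,H^0)$: the first two coordinates by hypothesis, and the first and third coincide because $\hat H^{(p)}_\nu=H^{n_{m(p)}}_\nu$ for every $\nu\in\Theta_{\ell_{r(p)}}$, so they share finite-dimensional distributions on the dense set $\bigcup_{\ell\ge 1}\Theta_\ell\subseteq[0,\infty)\setminus\operatorname{Disc}_{\Pro^0}(H^0,X^0)$, using $(H^n,X^n)\Rightarrow(H^0,X^0)$, \cite[Thm.~12.5.1(v)]{whitt} and the generalised continuous mapping theorem \cite[Thm.~2.7]{billingsley}. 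Since the Borel $\sigma$-algebra on the Skorokhod space is generated by projections onto a dense set of times, this pins down the joint law, and by the subsequence principle $(\hat H^{(p)},X^{n_{m(p)}})\Rightarrow(H^0,X^0)$ on $(\D_{\R^d}[0,\infty),\dM)^2$ while $H^{n_{m(p)}}$ and $\hat H^{(p)}$ share the common limit $H^0$.

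Finally, with this joint convergence, the \eqref{eq:Mn_An_condition} property of $\{X^{n_{m(p)}}\}$, and the $\delta_p$-control of (ii) playing the role of \eqref{eq:oscillcond}, I would run the argument underlying \cite[Thm.~3.6]{andreasfabrice_theorypaper} along the array: approximate $\hat H^{(p)}_{s-}$ by its evaluations on the $\delta_p$-mesh and bound the resulting error via $\hat{w}^T_{\delta_p}(\hat H^{(p)},X^{n_{m(p)}})$ against $\mathrm{TV}_{[0,T]}(A^{n_{m(p)}})$ and the martingale part of $X^{n_{m(p)}}$, thereby adjoining $\int_0^\bullet\hat H^{(p)}_{s-}\,\diff X^{n_{m(p)}}_s$ to the tight triple above with joint limit $(H^0,X^0,H^0,\int_0^\bullet H^0_{s-}\,\diff X^0_s)$; dropping the third coordinate gives exactly \eqref{eq:convergence_without_AVCI_for_corrected_integrands}. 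The main obstacle is precisely this last step: \eqref{eq:oscillcond} genuinely \emph{fails} for $\{(\hat H^{(p)},X^{n_{m(p)}})\}$ along any subsequence with $\ell_{r(p)}\to\infty$, because as the grid $\Theta_{\ell_{r(p)}}$ refines the effective $\delta$-window in Lemma~\ref{lem:AVCI_replacement_for_corrected_integrands_without_AVCI} shrinks to $0$; one therefore cannot invoke \cite[Thm.~3.6]{andreasfabrice_theorypaper} as a black box but must re-run its proof with the partition mesh $\delta_p$ coupled to the array---fine enough that the corrected integrands stay close to their $\delta_p$-grid evaluations so that Lemma~\ref{lem:AVCI_replacement_for_corrected_integrands_without_AVCI} applies with $k=k_p$, with $k_p\to\infty$ driving the residual error to $0$. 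Managing this joint extraction---the interplay of $\ell_{r(p)}$, $\delta_p$ and $n_{m(p)}$---is the delicate point; the remaining estimates are the familiar uniform-integrability and dominated-convergence arguments already deployed for the $J_1$ case in \cite{kurtzprotter}.
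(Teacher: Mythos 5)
Your route coincides with the paper's: apply Proposition~\ref{prop:convergence_of_corrected_integrands} to bring the corrected integrands to $H^0$, upgrade to joint convergence of the triple $(H^n,X^n,H^n-\tilde H^{n,k,\ell})$, verify~\eqref{eq:oscillcond} via Lemma~\ref{lem:AVCI_replacement_for_corrected_integrands_without_AVCI}, and cite~\cite[Thm.~3.6]{andreasfabrice_theorypaper}. The paper's own proof is exactly these four steps, stated in two sentences.

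What you add---and this is the correct instinct---is the observation that the third step is not automatic. Lemma~\ref{lem:AVCI_replacement_for_corrected_integrands_without_AVCI} places $\operatorname{sup}_{\ell\ge 1}$ \emph{outside} $\limsup_{\delta\downarrow 0}$, and its proof makes clear that the admissible $\delta$ genuinely depends on $\ell$: the estimate factors through $\operatorname{max}_{\nu\in\Theta_\ell\cap[0,T]}u(X^n,\nu,2\delta)$, and as $\Theta_\ell$ refines this finite maximum runs over more grid points, some of which can sit arbitrarily close to $\operatorname{Disc}_{\Pro^0}(X^0)$, forcing the threshold $\delta(\ell)$ to shrink. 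Along the diagonal produced by Proposition~\ref{prop:convergence_of_corrected_integrands}, where $\ell_{r(p)}\to\infty$, it is therefore not formally immediate that one $\delta_0>0$ controls all large $p$; that is, condition~\eqref{eq:oscillcond} for the $p$-indexed pairs $\bigl(H^{n_{m(p)}}-\tilde H^{n_{m(p)},k_p,\ell_{r(p)}},\,X^{n_{m(p)}}\bigr)$ does not follow as stated from the lemma with its quantifiers in that order. The paper asserts it nonetheless and treats \cite[Thm.~3.6]{andreasfabrice_theorypaper} as a black box; strictly, closing the gap requires either a sharper bound using only the finitely many \emph{relevant} grid points $\rho^{n,k,\ell}_j$ (rather than all of $\Theta_\ell\cap[0,T]$) together with a diagonal extraction that couples these to the jump structure of $(H^0,X^0)$, or your proposal to re-run the proof of \cite[Thm.~3.6]{andreasfabrice_theorypaper} with a mesh $\delta_p$ adapted to the array, letting $k_p\to\infty$ kill the residual. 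The orchestration of $\ell_{r(p)}$, $\delta_p$ and $n_{m(p)}$ you describe is exactly the delicate heart of a fully rigorous version of this step, and it is precisely what the paper leaves implicit.
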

\begin{proof}
By Proposition \ref{prop:convergence_of_corrected_integrands} there exist subsequences $\{n_{m(p)}\}$, $\{\ell_{r(p)}\}$ that ensure 
\begin{align*}
    H^{n_{m(p)}} \, - \, \tilde{H}^{n_{m(p)},k_p,\ell_{r(p)}} \; \Rightarrow \; H^0 
\end{align*}
on $(\D_{\R^{d}}[0,\infty), \dM)$ as $p\to \infty$. Without loss of generality, assume that 
$$\big(H^{n_{m(p)}},\,X^{n_{m(p)}}, \,H^{n_{m(p)}} \, - \, \tilde{H}^{n_{m(p)},k_p,\ell_p}\big) \; \Rightarrow \;  \big(H^0,X^0,H^0\big)$$ on $(\D_{\R^{d}}[0,\infty), \, \dM)^3$ as $p\to \infty$, and by Lemma \ref{lem:AVCI_replacement_for_corrected_integrands_without_AVCI} the sequence $\{(H^{n_{m(p)}} \, - \, \tilde{H}^{n_{m(p)},k_p,\ell_p}, X^{n_{m(p)}})\}$ has \eqref{eq:oscillcond}. Then, the result follows immediately from \cite[Thm.~3.6]{andreasfabrice_theorypaper}.
\end{proof}

\subsection{Dealing with the remainder integrals} \label{sec:proof_dealing_with_the_remainder_integrals}
Let $\eta,\gamma>0$, $T>0$ and $X^n=M^n+A^n$ the semimartingale decomposition of the $X^n$ satisfying \eqref{eq:Mn_An_condition}. In this section, we are going to investigate the limiting behaviour of the remainder integrals
\begin{align} \int_0^t \tilde{H}^{n,k,\ell}_{s-} \; \diff X^n_s \; = \; \sum_{j=1}^\infty \int_{t \wedge \tau^{n,k,\ell}_j }^{ t \wedge \rho^{n,k,\ell}_j} \tilde{H}^{n,k,\ell}_{s-} \; \diff X^n_s, \qquad t\ge 0. \label{eq:remainder_integrals}
\end{align}

By $M_1$ tightness of the $X^n$ and $H^n$, choose $\delta,R>0$ such that $\Pro^n(A_{n,k}^\complement)\le \eta$
for all $n\ge 1$, and $\ell\ge 1$ large enough that $2|\Theta_\ell|<\delta$, where
\begin{align} 
    A_{n,k} \; := \; \{w'(X^n,\delta)\vee w'(H^n,\delta)  \le \gamma/2\} \cap \{N_{a_k\wedge \gamma}^T(H^n) \vee |X^n|^*_{T} \vee |H^n|^*_T \le R\}, \label{eq:auxiliary_set_probability_bound_proof_remainder_integrals} 
\end{align}
and 
\begin{align*} N^T_{a_k\wedge \gamma}(H^n) \; := \;   \operatorname{sup} \{ \, n \,  : \, 0=t_1\le t_2 \le ...\le t_{2n}=T \; , \;  |H^n_{t_{2i}}- H^n_{t_{2i-1}}|\ge a_k \wedge \gamma\}\end{align*}
denotes the maximal number of increments of $H^n$ larger than $a_k$ in size. Its tightness (as well as the tightness of the running supremum), is a well-known standard property of $M_1$ tight sequences of stochastic processes (see e.g.~\cite[Cor.~A.9]{andreasfabrice_theorypaper}). We shall point out that $\{\#\{j: \tau^{n,k,\ell}_j \le T\} > R\} \subseteq \{N^T_{a_k}(H^n) >R\}$, and therefore there are at most $R$ stopping times $\tau^{n,k,\ell}_j$, $j\ge 1$, with $\tau^{n,k,\ell}_j\le T$ on the set $A_{n,k}$.

Towards the next lemma, recall the definition of $w'$ in \eqref{it2:tightness_of_corrected_integrands_1} of the proof of Lemma \ref{lem:tightness_of_corrected_integrands_1}. By accessing the 'future' of the dynamics, we will be able to closely approximate sequences of real-valued càdlàg processes, which are tight in $M_1$, on small enough intervals through monotone step functions. Lemma \ref{lem:approx_of_M1_by monotone_pieces_X} formalises this observation for deterministic paths.

\begin{lem} \label{lem:approx_of_M1_by monotone_pieces_X}
    Let $x \in \D_{\R}[0,\infty)$, $\gamma,\delta >0$, $T>0$ and $0\le t_1 < t_2 \le T$ with $t_2-t_1\le \delta$. If $w'(x,\delta)< \gamma/2$, then there exists an increasing càdlàg step function $\xi:[t_1,t_2] \to [0,1]$ such that $\xi(t_1)=0$, $\xi(t_2)=1$, and 
    $$ |x(t)- [x(t_1)+ \xi(t)(x(t_2)-x(t_1))]| \; \le \; \gamma$$
    for all $t \in [t_1,t_2]$.
\end{lem}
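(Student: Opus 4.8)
The plan is to exploit the definition of the $M_1$ modulus $w'(x,\delta)$ directly. Recall that $w'(x,\delta)<\gamma/2$ means that for every triple $s\le t\le r$ with $r-s\le\delta$ (and within $[0,T]$) the value $x(t)$ lies within distance $\gamma/2$ of the segment $[x(s),x(r)]$, i.e.\ of some convex combination $\lambda x(s)+(1-\lambda)x(r)$. Applied with $s=t_1$, $r=t_2$ (which is legitimate since $t_2-t_1\le\delta$), this tells us that for \emph{each} $t\in[t_1,t_2]$ there is a $\lambda(t)\in[0,1]$ with $|x(t)-[x(t_1)+(1-\lambda(t))(x(t_2)-x(t_1))]|\le\gamma/2$; write $\theta(t):=1-\lambda(t)\in[0,1]$, so that $\theta(t_1)$ can be taken to be $0$ and $\theta(t_2)$ to be $1$ (the endpoints being on the segment trivially). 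Thus there is \emph{a} function $\theta:[t_1,t_2]\to[0,1]$, a priori neither monotone nor càdlàg, with $|x(t)-[x(t_1)+\theta(t)(x(t_2)-x(t_1))]|\le\gamma/2$ for all $t$. The task reduces to replacing $\theta$ by an increasing càdlàg step function $\xi$ with $\xi(t_1)=0$, $\xi(t_2)=1$, at the cost of enlarging the error from $\gamma/2$ to $\gamma$.

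The key structural observation is that the segment $[x(t_1),x(t_2)]$ is one-dimensional (we are in $\D_\R$), so being within $\gamma/2$ of the point parametrised by $\theta(t)$ forces $\theta(t)$ into an interval whose length is controlled by $\gamma/|x(t_2)-x(t_1)|$ — unless $|x(t_2)-x(t_1)|$ is small, in which case \emph{any} choice of $\xi$ works and we are done immediately. So assume $|x(t_2)-x(t_1)|>2\gamma$ (say). Then I would build $\xi$ by a monotone-rearrangement / "greedy running maximum" construction: roughly, set $\xi(t):=\sup\{\,\theta(u): t_1\le u\le t\,\}$, which is automatically nondecreasing with $\xi(t_1)\ge\theta(t_1)=0$ and $\xi(t_2)\ge\theta(t_2)=1$, hence $\xi(t_2)=1$. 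One then checks that $\xi(t)\ge\theta(t)$ cannot overshoot too much: if $\xi(t)$ is attained near some earlier time $u<t$ with $\theta(u)$ large, then by càdlàg-ness of $x$ and the $w'$ bound applied to the triple $t_1\le u\le t\le t_2$ wait — one must instead apply $w'$ on triples of the form $t_1\le t\le u$ or use that $x(u)$ and $x(t)$ are both pinned near the segment, forcing $x(t)$ to be at least as far along the segment as $x(u)$ up to error $\gamma$. This is precisely the $M_1$ "almost-monotone" heuristic. Finally, to get a genuine \emph{step} function rather than merely a monotone one, I would approximate this monotone $\xi$ from outside by a step function taking finitely many values, using right-continuity and the fact that a monotone bounded function has at most countably many jumps; choosing the mesh of values fine relative to $\gamma/|x(t_2)-x(t_1)|$ keeps the extra error below the remaining $\gamma/2$ budget.

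I expect the main obstacle to be making the "monotone rearrangement preserves closeness to the segment" step fully rigorous while staying honest about which triples the hypothesis $w'(x,\delta)<\gamma/2$ actually controls — the subtlety is that $w'$ compares $x(t)$ to the chord $[x(s),x(r)]$ only for $s\le t\le r$, so to argue that $\xi$ (built as a running sup of $\theta$) stays near $x$ one must carefully chain together several applications of the bound at nested time triples, and handle the exceptional thin regime $|x(t_2)-x(t_1)|$ comparable to $\gamma$ separately (there the statement is vacuous-ish and any increasing step function from $0$ to $1$ does the job). A secondary, purely bookkeeping, point is arranging $\xi(t_1)=0$ exactly and $\xi(t_2)=1$ exactly (not just approximately) while keeping it a step function — this is handled by defining $\xi(t_1):=0$ by fiat and putting the final jump to the value $1$ at $t_2$, which only affects the bound at the single point $t_2$, where it holds with equality.
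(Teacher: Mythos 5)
Your proposal takes a genuinely different route from the paper's, and your own commentary already puts a finger on where it is soft. The paper builds $\xi$ as a step function directly and greedily: starting from $\sigma_0=t_1$, $\xi(\sigma_0)=0$, it waits until the path has drifted more than $\gamma$ from the current target value $\nu(\xi(\sigma_{j-1}))$, then sets $\xi(\sigma_j):=\operatorname{arg\,min}_{\lambda\in[0,1]}|x(\sigma_j)-\nu(\lambda)|$ and repeats. Finiteness of the number of steps (via $|x(\sigma_j)-x(\sigma_{j+1})|\ge\gamma/2$) and the error bound $|x(t)-\nu(\xi(t))|\le\gamma$ fall out immediately; the only real work is a case analysis proving $\xi(\sigma_j)\ge\xi(\sigma_{j-1})$, i.e.\ monotonicity. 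You instead first produce a pointwise-closest parameter $\theta(t)$ using only the triple $t_1\le t\le t_2$, then force monotonicity by taking a running supremum, then discretize. The underlying intuition — $w'(x,\delta)<\gamma/2$ forbids backtracking of the path by more than $O(\gamma)$, so a monotone reparametrization cannot be far from the true one — is correct and is indeed the same intuition the paper exploits.

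However, as written there is a genuine gap, and it is not just a matter of polish. First, $\theta$ is not uniquely determined by the $w'$ hypothesis and not every choice works; you need the actual metric projection onto $[x(t_1),x(t_2)]$ for the chained $w'$ applications to give a clean inequality. Second, and more seriously, the error budget does not close. Defining $\theta$ via $w'$ on the triple $(t_1,t,t_2)$ already uses $\gamma/2$. Controlling the running-max overshoot at time $t$ requires $w'$ on a second triple $(u,t,t_2)$ (where $u$ is where the sup is nearly attained), and even with the tightest choice of $\theta$ this costs a further $\gamma/2$ of error in $\nu(\xi(t))$. At that point you have already spent the entire $\gamma$ allowance on producing a monotone $\xi$ that is merely càdlàg (after taking a right-continuous modification), and there is nothing left over to pay for discretizing it into a \emph{step} function as the lemma demands — "choosing the mesh of values fine relative to $\gamma/|x(t_2)-x(t_1)|$" costs strictly positive additional error. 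Fixing this requires either strengthening the hypothesis to $w'(x,\delta)<\gamma/4$, or, better, collapsing the monotonize-then-discretize steps into a single construction — which is essentially what the paper's greedy stopping-time definition of $\sigma_j$ and $\xi(\sigma_j)$ accomplishes in one pass.
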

\begin{proof}
    Define $\nu(\lambda):=x(t_1)+\lambda(x(t_2)-x(t_1))$, set $\sigma_0:=t_1$, $\xi(\sigma_0):=0$ as well as iteratively
    \begin{align*}
        \sigma_j \; &:= \; \operatorname{inf}\{ t> \sigma_{j-1}\, : \, |x(t)-\nu(\xi(\sigma_{j-1}))| > \gamma\} \, \wedge \, t_2\\
        \xi(\sigma_j) \; &:= \; \operatorname{arg\,min}_{\lambda \in [0,1]} |x(\sigma_j)-\nu(\lambda)|
    \end{align*}
    for $j\ge 1$. Further, define $\bar{\sigma}:= \operatorname{inf}\{ t> 0 : |x(t)-x(t_2)| \le \gamma/2\} \wedge t_2$. Since $t_2-t_1<\delta$ and $w'(x,\delta)\le \gamma/2$, it holds that $|x(\sigma_j)-\nu(\xi(\sigma_j))|\le \gamma/2$, $j\ge 1$. By definition of the $\sigma_j$ and a simple application of the triangle inequality, we thus obtain $|x(\sigma_j)-x(\sigma_{j+1})|\ge \gamma/2$ for all $j\ge 1$. Set either 
    \begin{align}
        \xi(t) \; := \; \sum_{j=0}^\infty \xi(\sigma_j) \ind_{[\sigma_j\wedge \bar{\sigma},\, \sigma_{j+1}\wedge \bar{\sigma})}(t) \, + \, \ind_{[\bar\sigma, \, t_2]}(t), \qquad t_1\le t\le t_2, \label{eq:interpolation_1_monotone_piece}
    \end{align}
    or 
    \begin{align}
        \xi(t) \; := \; \sum_{j=0}^\infty \xi(\sigma_j) \ind_{[\sigma_j\wedge t_2,\, \sigma_{j+1}\wedge t_2)}(t) \, + \, \ind_{\{t_2\}}(t), \qquad t_1\le t\le t_2. \label{eq:interpolation_2_monotone_piece}
    \end{align}
    In both cases, the sum is---in fact---finite. Indeed, it is well-known that, on compact time intervals, the number of increments of a càdlàg function larger than $\gamma/2$ is bounded and, as described above, $|x(\sigma_j)-x(\sigma_{j+1})|\ge \gamma/2$ for all $j\ge 1$. Moreover, from the definition of the quantities it clearly holds $|x(t)-\nu(\xi(t))|\le \gamma$ (in the case of \eqref{eq:interpolation_2_monotone_piece}, on $[\bar \sigma, t_2]$ this holds due to $|x(\bar \sigma)- x(t_2)|\le \gamma/2$ and $w'(x,\delta)\le \gamma/2$), and so it only remains to show that $\xi$ is monotone.\\
    Let us first assume $x(\sigma_1)> \nu(\xi(\sigma_0))=x(t_1)$. Then, we immediately deduce $x(t_2)>x(t_1)$. Indeed, $x(t_2)>x(t_1)$ must hold since, otherwise, $x(\sigma_1)\ge x(t_1)+\gamma > x(t_1)\ge x(t_2)$, and consequently there is ~$\operatorname{inf}\{|x(\sigma_1)-y|: y \in [x(t_1)\wedge x(t_2),x(t_1)\vee x(t_2)]\}\ge \gamma$, which is a contradiction to $w'(x,\delta)\le \gamma/2$. Similarly, we obtain that $x(\sigma_j)\ge \nu(\xi(\sigma_j))$ for all $j\ge 1$. To see this, assume for the sake of a contradiction, that $j$ is the first index with $x(\sigma_j)< \nu(\xi(\sigma_j))$, which yields $x(\sigma_j)\ge  x(\sigma_{j-1})\ge \nu(\xi(\sigma_{j-1}))$. Then, $\nu(\xi(\sigma_{j-1}))\le x(\sigma_{j-1})\le x(\sigma_j) < \nu(\xi(\sigma_j)) \le x(t_2)$, which yields that $x(\sigma_j) \in [\nu(\xi(\sigma_{j-1})), x(t_2)]$. Thus, there exists $\lambda$ such that $x(\sigma_j)=\nu(\lambda)$, implying, by definition of $\xi$, that $\xi(\sigma_j)=\lambda$, which contradicts $x(\sigma_j) < \nu(\xi(\sigma_j))$. Hence, it must hold that $x(\sigma_j) \ge x(\sigma_{j-1})\ge \nu(\xi(\sigma_{j-1}))$ for all $j\ge 1$, and therefore $x(\sigma_j) \in [\nu(\xi(\sigma_{j-1})),x(t_2)]$. Finally, this immediately gives us that $x(\sigma_j)=\nu(\alpha)$ with $\alpha\ge \xi(\sigma_{j-1})$, implying $\xi(\sigma_j)=\alpha\ge \xi(\sigma_{j-1})$ and hence the monotonicity of $\xi$. Monotonicity in the case $x(\sigma_1)\le x(t_1)$ can be shown analogously.
\end{proof}

The approximation in Lemma \ref{lem:approx_of_M1_by monotone_pieces_X} will generally suspend adaptedness if we apply it to adapted càdlàg stochastic processes. Lemma \ref{lem:approx_of_M1_by monotone_pieces_H} offers an obvious direct alternative for monotone approximation that preserves adaptedness. 

\begin{lem} \label{lem:approx_of_M1_by monotone_pieces_H}
    Let $x \in \D_{\R}[0,\infty)$, $R,\gamma,\delta >0$, $T>0$ and $0\le t_1 < t_2 \le T$ with $t_2-t_1\le \delta$. If $w'(x,\delta)< \gamma/2$ and $N^T_{\gamma}(x)<R$, then there exists a monotone càdlàg step function $\xi$ of the form
    \begin{align} \xi(t) \; := \; \sum_{j=1}^R x(\sigma_j) \ind_{[\sigma_j\, \wedge \, t_2, \, \sigma_{j+1}\, \wedge \, t_2)}(t), \qquad t_1 \le t < t_2 \label{eq:lem:approx_of_M1_by monotone_pieces_H}
    \end{align}
    as well as $\xi(t_2):= \lim_{t\uparrow t_2} \xi(t)$, where the $\sigma_j$ are stopping times and $|x(t)- \xi(t)| \; \le \; \gamma$ for all $t \in [t_1,t_2]$.
\end{lem}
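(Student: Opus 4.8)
The natural candidate is the adapted first-passage approximation: set $\sigma_1 := t_1$ and, recursively, $\sigma_{j+1} := \inf\{ t > \sigma_j \,:\, |x(t) - x(\sigma_j)| > \gamma \} \wedge t_2$ for $j\ge 1$, and then let $\xi$ be given by \eqref{eq:lem:approx_of_M1_by monotone_pieces_H} with $\xi(t_2):= \lim_{t\uparrow t_2}\xi(t)$. Since the $\sigma_j$ are first hitting times built only from the past of $x$, this is precisely the construction that survives replacing $x$ by an adapted càdlàg process, each $\sigma_j$ then becoming a stopping time; in the deterministic statement ``stopping time'' is a placeholder for this structural fact (contrast Lemma \ref{lem:approx_of_M1_by monotone_pieces_X}, where the interpolation point $\bar\sigma$ peeks into the future).

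First I would verify that $R$ summands suffice. Right-continuity of $x$ forces $\sigma_j<\sigma_{j+1}$ strictly, and whenever $\sigma_{j+1}<t_2$ one gets $|x(\sigma_{j+1})-x(\sigma_j)|\ge\gamma$ (the infimum being realised either through a jump or in the right limit). Since the intervals $[\sigma_j,\sigma_{j+1}]$ over consecutive indices overlap only at endpoints, the number of indices $j$ with $\sigma_j<t_2$ is at most $N^T_\gamma(x)+1<R+1$, i.e.\ at most $R$; hence $\sigma_j=t_2$ for all large $j\le R$, the corresponding summands in \eqref{eq:lem:approx_of_M1_by monotone_pieces_H} are empty, and $\xi$ is a genuine finite step function. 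The approximation bound is then immediate on $[t_1,t_2)$: on each interval $[\sigma_j,\sigma_{j+1})$ we have $\xi(t)=x(\sigma_j)$ and $|x(t)-x(\sigma_j)|\le\gamma$ directly by the definition of $\sigma_{j+1}$; the value at $t_2$, defined as the left limit $x(\sigma_J)$ with $\sigma_J$ the last passage time strictly below $t_2$, is handled along the same lines using $|x(t)-x(\sigma_J)|\le\gamma$ for $t\uparrow t_2$.

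The real content, and the step I expect to be the main obstacle, is the monotonicity of $\xi$, equivalently of the sequence $\bigl(x(\sigma_j)\bigr)_j$; this is exactly where the hypothesis $w'(x,\delta)<\gamma/2$ enters (together with $t_2-t_1\le\delta$, which forces every triple $s\le t\le r$ inside $[t_1,t_2]$ to lie in the window defining $w'$, so that $\norm{x_t-[x_s,x_r]}<\gamma/2$ for all such triples). The key observation is that the direction of the passages cannot reverse: assuming without loss of generality that the first genuine passage is upward, so $x(\sigma_2)\ge x(\sigma_1)+\gamma$, I would show by induction that $x(\sigma_{j+1})\ge x(\sigma_j)+\gamma$ for every $j$. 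If this failed at some step, the induction hypothesis $x(\sigma_{j-1})\le x(\sigma_j)-\gamma$ together with $x(\sigma_{j+1})\le x(\sigma_j)-\gamma$ would place both endpoints of the segment $[x(\sigma_{j-1}),x(\sigma_{j+1})]$ at distance at least $\gamma$ below $x(\sigma_j)$, whence $\norm{x_{\sigma_j}-[x_{\sigma_{j-1}},x_{\sigma_{j+1}}]}\ge\gamma>\gamma/2$, contradicting $w'(x,\delta)<\gamma/2$. The downward case and the case where the final passage lands exactly at $t_2$ are treated identically. Since $\xi$ assumes the values $x(\sigma_j)$ in increasing order of $j$, its monotonicity follows, which completes the argument.
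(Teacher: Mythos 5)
Your proof is correct and follows essentially the same strategy as the paper's: same recursive definition of the first-passage times $\sigma_j$, same immediate approximation bound, and same use of the oscillation condition $w'(x,\delta)<\gamma/2$ to rule out a sign reversal of the passages. The only (minor) difference is in the triple you feed into $w'$: you argue inductively with $(\sigma_{j-1},\sigma_j,\sigma_{j+1})$, placing $x(\sigma_j)$ at distance $\ge\gamma$ from the segment $[x(\sigma_{j-1}),x(\sigma_{j+1})]$, whereas the paper uses $(\sigma_j,\sigma_{j+1},t_2)$ together with the global containment $x(t)\in(x(t_1)-\gamma/2,\,x(t_2)+\gamma/2)$; both derive the same contradiction from $w'(x,\delta)<\gamma/2$, and otherwise the arguments coincide.
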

\begin{proof}
Set $\sigma_0:=t_1$ and define iteratively
    \begin{align*}
        \sigma_j \; &:= \; \operatorname{inf}\{ t> \sigma_{j-1}\, : \, |x(t)-x(\sigma_{j-1})| > \gamma\} \, \wedge \, t_2
    \end{align*}
    for $j\ge 1$, as well as $\xi$ as in \eqref{eq:lem:approx_of_M1_by monotone_pieces_H}. First consider the case that $x(t_1) \le x(t_2)$. Due to $w'(x,\delta)<\gamma/2$, it is immediate that $x(t) \in (x(t_1)-\gamma/2, x(t_2)+\gamma/2)$. Assume for the sake of a contradiction that $j\ge 0$ is the first index such that $x(\sigma_{j+1})<x(\sigma_j)$. By definition of the stopping times this implies $x(\sigma_{j+1})\le x(\sigma_j)-\gamma$ and $x(\sigma_j)< x(t_2)+\gamma/2$, and so we obtain ~$\operatorname{inf}\{|x(\sigma_{j+1})-y|: y \in [x(\sigma_j)\wedge x(t_2),x(\sigma_j)\vee x(t_2)]\}\ge \gamma$. This, however, contradicts $w'(x,\delta)< \gamma/2$, since $t_1< \sigma_j <\sigma_{j+1}\le t_2\le t_1+\delta$. Thus, $\xi$ is increasing on $[t_1,t_2]$. For the case $x(t_1) > x(t_2)$ one can proceed similarly, deducing that $\xi$ is decreasing.
\end{proof}

Whenever there is no risk for confusion, for the rest of this section we will simply write $\lfloor \tau^{n,k,\ell}_j\rfloor$ and $\lceil \tau^{n,k,\ell}_j\rceil$ to denote $\lfloor \tau^{n,k,\ell}_j\rfloor_{\Theta_\ell}$ and $\lceil \tau^{n,k,\ell}_j\rceil_{\Theta_\ell}$. Lemma \ref{lem:approx_of_M1_by monotone_pieces_X} allows us to decompose 
\begin{align}
    X^n_t \; = \; X^n_{\lfloor \tau^{n,k,\ell}_j\rfloor} \, + \, \xi^{j,n,k,\ell}_t(X^n_{\lceil \tau^{n,k,\ell}_j\rceil}- X^n_{\lfloor \tau^{n,k,\ell}_j\rfloor}) \, + \, \phi^{j,n,k,\ell}_t \label{decomp:X^n_for_remainder_integrals}
\end{align} 
for $\lfloor \tau^{n,k,\ell}_j\rfloor \le t \le \lceil \tau^{n,k,\ell}_j\rceil$ on $\{w'(X^n,\delta)\le \gamma/2\}$ (and thus on $A_{n,k})$, where $\xi^{j,n,k,\ell}$ is a coordinatewise non-decreasing, $\R^d$-valued step function with $|\xi^{j,n,k,\ell}|\le d$, and $|\phi^{j,n,k,\ell}|\le d\gamma$. Recall that $d$ is the dimension of the random vectors $H^n,X^n$. We shall emphasise that neither $\xi^{j,n,k,\ell}$ nor $\phi^{j,n,k,\ell}$ are generally adapted. This will not pose an issue in the remaining parts of this section, as will be thoroughly explained at the relevant positions. Similarly, we can decompose the $H^n$ on $A_{n.k}$ according to Lemma \ref{lem:approx_of_M1_by monotone_pieces_X} by 
\begin{align}
    \tilde{H}^{n,k,\ell}_t \; &= \; H^n_t \, - \, H^n_{\tau^{n,k,\ell}_j-} \; = \;( H^n_{\lfloor \tau^{n,k,\ell}_j\rfloor} - H^n_{\tau^{n,k,\ell}_j-}) \, + \, \zeta^{j,n,k,\ell}_t(H^n_{\lceil \tau^{n,k,\ell}_j\rceil}- H^n_{\lfloor \tau^{n,k,\ell}_j\rfloor}) \, + \, \psi^{j,n,k,\ell}_t   
    \label{decomp:H^n_for_remainder_integrals_non_adapted}
\end{align} 
for $\tau^{n,k,\ell}_j \le t \le \lceil \tau^{n,k,\ell}_j\rceil$, where as above $\zeta^{j,n,k,\ell}$ is a coordinatewise non-decreasing, $\R^d$-valued step function with $|\zeta^{j,n,k,\ell}|\le d$, and $|\psi^{j,n,k,\ell}|\le d\gamma$. Moreover, we obtain a second decomposition from Lemma \ref{lem:approx_of_M1_by monotone_pieces_H}, 
\begin{align}
    \tilde{H}^{n,k,\ell}_t \; &= \; \hat{\zeta}^{j,n,k,\ell}_t \, + \, \hat\psi^{j,n,k,\ell}_t  
    \label{decomp:H^n_for_remainder_integrals_adapted}
\end{align} 
for $\tau^{n,k,\ell}_j \le t \le \lceil \tau^{n,k,\ell}_j\rceil$, where $\hat{\zeta}^{j,n,k,\ell}$ is a coordinatewise monotone and càdlàg simple $\R^d$-valued process with $|\hat{\zeta}^{j,n,k,\ell}_t|\le |H^n|^*_T$, and $|\hat\psi^{j,n,k,\ell}|\le d\gamma$. We point out, that---unlike the decompositions in \eqref{decomp:X^n_for_remainder_integrals}, \eqref{decomp:H^n_for_remainder_integrals_non_adapted}---the processes of the decomposition \eqref{decomp:H^n_for_remainder_integrals_adapted} are in fact adapted processes.

In a next step, we are going to use the previous decompositions in order to decompose the remainder integrals on the right-hand side of \eqref{eq:remainder_integrals}. More precisely, on $A_{n,k}$, \eqref{decomp:X^n_for_remainder_integrals} gives us 
$$ \int_{t \wedge \tau^{n,k,\ell}_j }^{ t \wedge \rho^{n,k,\ell}_j} \tilde{H}^{n,k,\ell}_{s-} \; \diff X^n_s \, = \, (X^n_{\lceil \tau^{n,k,\ell}_j\rceil}- X^n_{\lfloor \tau^{n,k,\ell}_j\rfloor})\int_{t \wedge \tau^{n,k,\ell}_j }^{ t \wedge \rho^{n,k,\ell}_j} \tilde{H}^{n,k,\ell}_{s-} \, \diff \xi^{j,n,k,\ell}_s \, + \, \int_{t \wedge \tau^{n,k,\ell}_j }^{ t \wedge \rho^{n,k,\ell}_j} \tilde{H}^{n,k,\ell}_{s-} \, \diff \phi^{j,n,k,\ell}_s$$
for each $j,n,k,\ell \ge 1$ and $t\ge 0$. Now, applying decomposition \eqref{decomp:H^n_for_remainder_integrals_non_adapted} for the integrands of the first integral and decomposition \eqref{decomp:H^n_for_remainder_integrals_adapted} for the integrands of the second integral, we obtain
\begin{align}
   \int_{t \wedge \tau^{n,k,\ell}_j }^{ t \wedge \rho^{n,k,\ell}_j} \tilde{H}^{n,k,\ell}_{s-} \; \diff X^n_s \; = \; &(H^n_{\lceil \tau^{n,k,\ell}_j\rceil}- H^n_{\lfloor \tau^{n,k,\ell}_j\rfloor}) (X^n_{\lceil \tau^{n,k,\ell}_j\rceil}- X^n_{\lfloor \tau^{n,k,\ell}_j\rfloor}) \int_{t \wedge \tau^{n,k,\ell}_j }^{ t \wedge \rho^{n,k,\ell}_j} \zeta^{j,n,k,\ell}_{s-} \, \diff \xi^{j,n,k,\ell}_s \notag \\
   &+\; (H^n_{\lfloor \tau^{n,k,\ell}_j\rfloor}-H^n_{\tau^{n,k,\ell}_j-} )( \xi^{j,n,k,\ell}_{t\wedge \lceil \tau^{n,k,\ell}_j \rceil }-\xi^{j,n,k,\ell}_{t\wedge \tau^{n,k,\ell}_j}) \, + \, \int_{t \wedge \tau^{n,k,\ell}_j }^{ t \wedge \rho^{n,k,\ell}_j} \psi^{j,n,k,\ell}_{s-} \, \diff \xi^{j,n,k,\ell}_s \notag \\
    &+\; \int_{t \wedge \tau^{n,k,\ell}_j }^{ t \wedge \rho^{n,k,\ell}_j} \hat \zeta^{j,n,k,\ell}_{s-}\, \diff \phi^{j,n,k,\ell}_s \, + \, \int_{t \wedge \tau^{n,k,\ell}_j }^{ t \wedge \rho^{n,k,\ell}_j} \hat\psi^{j,n,k,\ell}_{s-} \, \diff \phi^{j,n,k,\ell}_s\label{decomp:remainder_integral}
\end{align}
on $A_{n,k}$.

We will say that a family of processes $Z^{n,k,\ell}$ \emph{vanish conveniently in probability}, if for every $\varepsilon>0$ there is
$$ \limsup\limits_{k\to \infty} \, \limsup\limits_{\ell \to \infty} \, \limsup\limits_{n\to \infty} \, \Pro^n( |Z^{n,k,\ell}|^*_T \, > \, \varepsilon, \, A_{n,k}) \; \conv{\gamma \to 0}{} \; 0.$$
With this definition at hand, we state the following lemma.

\begin{lem} \label{lem:junk_vanishes_conveniently}
    All but the first summand on the right-hand side of \eqref{decomp:remainder_integral} vanish conveniently in probability.
\end{lem}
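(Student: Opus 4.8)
The plan is to estimate, on the good event $A_{n,k}$, each of the four summands after summing over the windows $[\tau^{n,k,\ell}_j,\rho^{n,k,\ell}_j]$ with $\tau^{n,k,\ell}_j\le T$ (of which there are at most $R$ on $A_{n,k}$), and then to take the iterated limits $n\to\infty$, $\ell\to\infty$, $k\to\infty$, $\gamma\to0$ in that order. Throughout I use the uniform a.s.\ bounds from Lemmas~\ref{lem:approx_of_M1_by monotone_pieces_X} and~\ref{lem:approx_of_M1_by monotone_pieces_H}: $|\psi^{j,n,k,\ell}|,|\phi^{j,n,k,\ell}|,|\hat\psi^{j,n,k,\ell}|\le d\gamma$, $|\zeta^{j,n,k,\ell}|,|\xi^{j,n,k,\ell}|\le d$ with coordinatewise total variation $\le1$, and $|\hat\zeta^{j,n,k,\ell}|\le|H^n|^*_T$ with coordinatewise total variation $\le2|H^n|^*_T$ by monotonicity; together with the defining property of $\tau^{n,k,\ell}_j$, which forces every increment of $H^n$ over a $|\Theta_\ell|$-window contained in $[\rho^{n,k,\ell}_{j-1},\tau^{n,k,\ell}_j)$ to be at most $a_k$, hence $|H^n_{\lfloor\tau^{n,k,\ell}_j\rfloor}-H^n_{\tau^{n,k,\ell}_j-}|\le d\,a_k$.

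For the two $\phi$-driven summands $\int\hat\zeta^{j,n,k,\ell}_{s-}\,\diff\phi^{j,n,k,\ell}_s$ and $\int\hat\psi^{j,n,k,\ell}_{s-}\,\diff\phi^{j,n,k,\ell}_s$, the integrator $\phi^{j,n,k,\ell}$ need not be of finite variation (as $X^n$ is only a semimartingale), so I integrate by parts — legitimate because $\hat\zeta^{j,n,k,\ell},\hat\psi^{j,n,k,\ell}$ are finite step processes — transferring the integral onto $\diff\hat\zeta^{j,n,k,\ell}$ resp.\ $\diff\hat\psi^{j,n,k,\ell}$ and picking up boundary and covariation terms, all of which involve $\phi^{j,n,k,\ell}$ only through its sup norm $\le d\gamma$; each such summand is thus bounded, per window, by a universal constant times $\gamma\,(1+|H^n|^*_T)$. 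For the remaining summands the prefactor $(X^n_{\lceil\tau^{n,k,\ell}_j\rceil}-X^n_{\lfloor\tau^{n,k,\ell}_j\rfloor})$ is split along $X^n=M^n+A^n$: the $A^n$-part aggregates over the essentially disjoint windows to at most $\text{TV}_{[0,T]}(A^n)$, which is stochastically bounded uniformly in $n$ by \eqref{eq:Mn_An_condition}, while the $M^n$-part is controlled after stopping at $\tau^n_c$ — on $A_{n,k}$ intersected with $\{\text{TV}_{[0,T]}(A^n)\le R\}$ one has $|M^n|^*_T\le2R$, so with $c:=2R+1$ one has $T<\tau^n_c$, and $M^n_{\cdot\wedge\tau^n_c}$ is a true martingale dominated by $c+|\Delta M^n_{T\wedge\tau^n_c}|$ whose expectation is bounded uniformly in $n$ by the second part of \eqref{eq:Mn_An_condition}, so a Lenglart/Doob estimate bounds the associated martingale integrals.

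Passing to the limits, the $\phi$-driven and $\psi$-driven summands are $O(\gamma)$ times a stochastically bounded quantity and hence vanish as $\gamma\to0$, whereas the summand carrying $H^n_{\lfloor\tau^{n,k,\ell}_j\rfloor}-H^n_{\tau^{n,k,\ell}_j-}$ is $O(a_k)$ times a stochastically bounded quantity, so it vanishes as $k\to\infty$ since $a_k\downarrow0$; here one additionally exploits that letting $\ell\to\infty$ makes $|\Theta_\ell|\to0$, so the windows $[\lfloor\tau^{n,k,\ell}_j\rfloor,\rho^{n,k,\ell}_j]$ shrink, $H^n_{\lfloor\tau^{n,k,\ell}_j\rfloor}-H^n_{\tau^{n,k,\ell}_j-}\to0$, and the window-wise $X^n$-increments reduce to jump increments of $X^n$, of which at most $N^T_\gamma(X^n)$ exceed $\gamma$ in size, a number stochastically bounded uniformly in $n$.

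I expect the aggregation step to be the main obstacle: on $A_{n,k}$ the number $R$ of windows, although finite, \emph{grows with $k$} (it must dominate $N^T_{a_k}(H^n)$), so one cannot afford the crude ``number of windows $\times$ per-window bound'' estimate. The resolution is to reassemble the window-wise integrals into single integrals over $[0,T]$ whose integrands are uniformly $O(\gamma)$ (resp.\ supported where $H^n$-increments are $\le a_k$), so that the total is controlled by $\gamma$ (resp.\ $a_k$) times a quantity — $\text{TV}_{[0,T]}(A^n)$, a stopped-martingale supremum, or $N^T_\gamma(X^n)$ — that is stochastically bounded independently of both $k$ and $\ell$. A secondary technical nuisance is that the rounded-down times $\lfloor\tau^{n,k,\ell}_j\rfloor$ are not stopping times, so the martingale bookkeeping must be arranged to touch $M^n$ only at the stopping times $\tau^{n,k,\ell}_j$ and $\rho^{n,k,\ell}_j$ (and through the jump control in \eqref{eq:Mn_An_condition}) rather than at $\lfloor\tau^{n,k,\ell}_j\rfloor$.
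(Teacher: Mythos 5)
Your treatment of the last summand $\int\hat\psi^{j,n,k,\ell}_{s-}\,\diff\phi^{j,n,k,\ell}_s$ contains a genuine gap. You justify a pathwise integration by parts by asserting that $\hat\zeta^{j,n,k,\ell}$ \emph{and} $\hat\psi^{j,n,k,\ell}$ are finite step processes; this is true for $\hat\zeta^{j,n,k,\ell}$ (so your bound for $\int\hat\zeta_{s-}\,\diff\phi_s$ matches the paper's Abel-summation bound $3Rd\gamma$), but false for $\hat\psi^{j,n,k,\ell}=\tilde H^{n,k,\ell}-\hat\zeta^{j,n,k,\ell}$, which is merely bounded by $d\gamma$ in supremum norm and carries all of the sub-$\gamma$ oscillation of $H^n$, hence has no pathwise variation control. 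After integrating by parts you would face $\int\phi_{s-}\,\diff\hat\psi_s$ and a covariation term, neither of which is controlled by $\|\phi\|_\infty\le d\gamma$ alone, so no estimate of the form ``constant times $\gamma(1+|H^n|^*_T)$'' per window is available for this summand. This is exactly the term for which the paper's proof is genuinely stochastic: one writes $\diff\phi^{j,n,k,\ell}=\diff X^n_s-(X^n_{\lceil\tau\rceil}-X^n_{\lfloor\tau\rfloor})\,\diff\xi^{j,n,k,\ell}_s$, handles the $\diff\xi$ part pathwise, and for the $\diff X^n$ part uses that $\hat\psi$ is \emph{adapted} (the sole reason the second, adapted decomposition \eqref{decomp:H^n_for_remainder_integrals_adapted} via Lemma \ref{lem:approx_of_M1_by monotone_pieces_H} is introduced), so that $\int\hat\psi_{s-}\,\diff M^n_s$ is a local martingale controlled by Lenglart's inequality with Burkholder--Davis--Gundy domination and the jump/TV control of \eqref{eq:Mn_An_condition}, while $\int\hat\psi_{s-}\,\diff A^n_s$ is bounded by $d\gamma\,\operatorname{TV}_{[0,T]}(A^n)$.

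Relatedly, you deploy the \eqref{eq:Mn_An_condition}/Lenglart machinery on the wrong terms: the summands carrying the prefactor $(X^n_{\lceil\tau\rceil}-X^n_{\lfloor\tau\rfloor})$ and the $\diff\xi$-integrals need no martingale argument at all in the paper --- they are bounded pathwise by $2d\,a_k$, $d^2\gamma$ and $3Rd\gamma$ using the $a_k$-control on the pre-$\tau_j$ increment of $H^n$, $|\psi|,|\phi|,|\hat\psi|\le d\gamma$, and the monotonicity of $\xi,\hat\zeta$. Moreover, as you sketch it, a martingale estimate there would not go through anyway, since the weights $\psi,\zeta,\xi$ (and $H^n$ evaluated at $\lfloor\tau_j\rfloor$, which is not a stopping time) are not adapted, so the associated sums are not stochastic integrals of predictable integrands. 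Finally, your ``main obstacle'' about summing over the (possibly $k$-dependent) number $R$ of windows concerns the later aggregation step in Proposition \ref{prop:final_weak_convergence_remainder_integrals}, not this lemma: the statement and the paper's proof are per summand of the fixed-$j$ decomposition \eqref{decomp:remainder_integral}, with bounds uniform in $t$, $n$, $\ell$.
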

\begin{proof}
    It suffices to show the claim individually for each of the four processes. On $A_{n,k}$, the following bounds can be derived.
    \begin{enumerate}
        \item By definition of $\tau^{n,k,\ell}_j$, the fact that $\rho^{n,k,\ell}_{j-1}\vee (\tau^{n,k,\ell}_j-|\Theta_\ell|) \le \lfloor \tau^{n,k,\ell}_j\rfloor\le \tau^{n,k,\ell}_j$, and $|\xi^{j,n,k,\ell}|\le d$, the second summand can be bounded by
        $$ |(H^n_{\lfloor \tau^{n,k,\ell}_j\rfloor}-H^n_{\tau^{n,k,\ell}_j-} )( \xi^{j,n,k,\ell}_{t\wedge \lceil \tau^{n,k,\ell}_j \rceil }-\xi^{j,n,k,\ell}_{t\wedge \tau^{n,k,\ell}_j})| \; \le \; 2d\, a_k$$
        for all $t\ge 0$, $n,k,\ell \ge 1$.
        \item \label{it2:control_the_junk} The $\xi^{j,n,k,\ell}$ are coordinatewise increasing and therefore processes of finite variation. Hence, the third summand is a Lebesgue-Stieltjes integral, and it holds that 
        $$\Big| \int_{t \wedge \tau^{n,k,\ell}_j }^{ t \wedge \rho^{n,k,\ell}_j} \psi^{j,n,k,\ell}_{s-} \, \diff \xi^{j,n,k,\ell}_s \Big| \; \le \; \int_{t \wedge \tau^{n,k,\ell}_j }^{ t \wedge \rho^{n,k,\ell}_j} |\psi^{j,n,k,\ell}_{s-}| \, \diff \operatorname{TV}_{[0,s]}(\xi^{j,n,k,\ell})\; \le \; d^2\gamma $$
        for all $t\ge 0$, $n,k,\ell \ge 1$, due to $|\psi^{j,n,k,\ell}|\le d\gamma$, and $\xi^{j,n,k,\ell}\in [0,1]^d$ being coordinatewise increasing on $[\tau^{n,k,\ell}_j,\rho^{n,k,\ell}_j]$.
        \item With regards to the fourth summand, recall that $\hat{\zeta}^{j,n,k,\ell}$ is almost surely a càdlàg simple process, which can be written as 
        $$\hat{\zeta}^{j,n,k,\ell}_t=\sum_{i=1}^R \hat{\zeta}^{j,n,k,\ell}_{\sigma_i} \ind_{[\sigma_i\, \wedge \, \rho^{n,k,\ell}_j, \, \sigma_{i+1}\, \wedge\,  \rho^{n,k,\ell}_j)}(t)$$
        for stopping times $\sigma_i$ with $\sigma_i \le \sigma_{i+1}$ for all $i\ge 1$, and every $t \in [\tau^{n,k,\ell}_j,\rho^{n,k,\ell}_j]$. Further recall that $|\hat{\zeta}^{j,n,k,\ell}_t|\le |H^n|^*_T\le R$ on $A_{n,k}$. Without loss of generality, assume that $\sigma_1=\tau^{n,k,\ell}_j$. Then, the fourth summand on the right-hand side of \eqref{decomp:remainder_integral} is a simple integral, which by restructuring of the sum (i.e. integration by parts) becomes
        \begin{align*}
           \int_{t \wedge \tau^{n,k,\ell}_j }^{ t \wedge \rho^{n,k,\ell}_j} \hat \zeta^{j,n,k,\ell}_{s-}\, \diff \phi^{j,n,k,\ell}_s \; &= \; \sum_{i=1}^{R-1} \hat{\zeta}^{j,n,k,\ell}_{\sigma_i} (\phi^{j,n,k,\ell}_{\sigma_{i+1}\, \wedge\, t\,\wedge\,  \rho^{n,k,\ell}_j}-\phi^{j,n,k,\ell}_{\sigma_{i}\, \wedge\, t\,\wedge\,  \rho^{n,k,\ell}_j}) \\
           &= \; \phi^{j,n,k,\ell}_{\sigma_R \, \wedge \, t\,\wedge\,  \rho^{n,k,\ell}_j}\, \hat{\zeta}^{j,n,k,\ell}_{\sigma_{R-1}} \; - \; \phi^{j,n,k,\ell}_{\sigma_1 \, \wedge \, t\,\wedge\,  \rho^{n,k,\ell}_j}\, \hat{\zeta}^{j,n,k,\ell}_{\sigma_1} \\
           & \qquad + \sum_{i=2}^{R-1} \phi^{j,n,k,\ell}_{\sigma_i \, \wedge \, t\,\wedge\,  \rho^{n,k,\ell}_j} (\hat\zeta^{j,n,k,\ell}_{\sigma_{i-1}}-\hat\zeta^{j,n,k,\ell}_{\sigma_{i}}).
        \end{align*}
        Using that $|\hat{\zeta}^{j,n,k,\ell}_t|\le R$, that $\hat\zeta^{j,n,k,\ell}$ is coordinatewise monotone, and $|\phi^{j,n,k,\ell}|\le d\gamma$, we deduce
        $$ \Big|\int_{t \wedge \tau^{n,k,\ell}_j }^{ t \wedge \rho^{n,k,\ell}_j} \hat \zeta^{j,n,k,\ell}_{s-}\, \diff \phi^{j,n,k,\ell}_s\Big| \; \le \; 3Rd\gamma$$
        for all $t\ge 0$, $n,k,\ell \ge 1$.
        \item For the last summand, we recall that $\phi^{j,n,k,\ell}=X^n-\xi^{j,n,k,\ell}$, with $\xi^{j,n,k,\ell} \in [0,1]^d$ coordinatewise non-decreasing and therefore of finite variation. Hence, we can rewrite the integrals as 
        \begin{align}
            \int_{t \wedge \tau^{n,k,\ell}_j }^{ t \wedge \rho^{n,k,\ell}_j} \hat\psi^{j,n,k,\ell}_{s-} \, \diff \phi^{j,n,k,\ell} \; = \; \int_{t \wedge \tau^{n,k,\ell}_j }^{ t \wedge \rho^{n,k,\ell}_j} \hat\psi^{j,n,k,\ell}_{s-} \, \diff X^n_s \; - \; \int_{t \wedge \tau^{n,k,\ell}_j }^{ t \wedge \rho^{n,k,\ell}_j} \hat\psi^{j,n,k,\ell}_{s-} \, \diff \xi^{j,n,k,\ell}_s. \label{eq:junk_converging_to_zero_1}
        \end{align} 
        The second integral on the right-hand side of \eqref{eq:junk_converging_to_zero_1} can be estimated as in point \ref{it2:control_the_junk} above. Turning to the first integral on the right-hand side of \eqref{eq:junk_converging_to_zero_1}, we will use the good decompositions \eqref{eq:Mn_An_condition} of $X^n$, together with the adaptedness of $\hat\psi^{j,n,k,\ell}$ (which makes the integral process a local martingale) and the fact that $|\hat\psi^{j,n,k,\ell}|\le d\gamma$ by construction. An application of Lenglart's inequality \cite[Lem.~I.3.30]{shiryaev} with the $L$-domination property satisfied due to the classical Burkholder--Davis--Gundy inequality, we obtain
        \begin{align*}
            \Pro^n &\Big( \Big| \int_{\bullet \wedge \tau^{n,k,\ell}_j }^{ \bullet \wedge \rho^{n,k,\ell}_j} \hat\psi^{j,n,k,\ell}_{s-} \, \diff M^n_s\Big|^*_T  >  \varepsilon)\\
            &\le \; \frac{\Lambda d\gamma ( \sqrt{\beta}  + \E^n[\, |\Delta M^n_{T\wedge \sigma_1\wedge \sigma_2}|\, ])}{\varepsilon} \; + \; \Pro^n( |M^n|^*_T  \ge  R )\;  + \; \Pro^n( [M^n]_T  \ge  \beta  ) 
        \end{align*}
        for any $\beta>0$, where $\sigma_1:=\operatorname{inf}\{t\ge \tau^{n,k,\ell}_j : [M^n]_T\ge \beta\}$, $\sigma_2:=\operatorname{inf}\{t\ge \tau^{n,k,\ell}_j : |M^n|^*_T\ge R\}$ and $\Lambda$ denotes the 'upper' constant arising from the Burkholder-Davis-Gundy inequality. Another application of Lenglart's inequality to $\Pro^n( [M^n]_T  \ge  \beta  )$ finally, yields 
        \begin{align}
            \Pro^n &\Big( \Big| \int_{\bullet \wedge \tau^{n,k,\ell}_j }^{ \bullet \wedge \rho^{n,k,\ell}_j} \hat\psi^{j,n,k,\ell}_{s-} \, \diff M^n_s\Big|^*_T  > \varepsilon) \notag \\
            &\le \; \frac{\Lambda d\gamma ( \sqrt{\beta} + 2R +  \E^n[\, |\Delta M^n_{T\wedge \sigma_2}|\, ])}{\varepsilon} \; + \; \frac{\lambda ( 2R  + \E^n[\, |\Delta M^n_{T\wedge \sigma_2}|\, ])}{\beta}\; + \; 2\Pro^n(|M^n|^*_T \ge  2R ) \label{eq:junk_converging_to_zero_2}
        \end{align}
        for any $\beta>0$, where $\lambda>0$ denotes the 'lower' constant arising from the Burkholder-Davis-Gundy inequality. Due to \eqref{eq:Mn_An_condition}, without loss of generality assume that $R>0$ is such that $\limsup
        _{n\to \infty}\Pro^n(\operatorname{TV}_{[0,T]}(A^n)>R)\le \gamma$, and denote 
        $\Gamma  := \limsup_{n\to \infty}\, \E^n[\, |\Delta M^n_{T\wedge \sigma_2}|\, ]  < \infty$.
        Since $R>0$ was also chosen so that we have $\operatorname{sup}_{n\ge 1}\Pro^n(|X^n|^*_{T}> R) \le \gamma$, the former implies 
        $$\limsup\limits_{n\ge 1}\Pro^n(|M^n|^*_{T}> 2R) \; \le \;  \operatorname{sup}_{n\ge 1}\Pro^n(|X^n|^*_{T}> R)\; + \; \limsup\limits_{n\ge 1}\Pro^n(\operatorname{TV}_{[0,T]}(A^n)>R) \; \le \; 2\gamma$$
        due to $M^n=X^n-A^n$. Consequently, by choosing $\beta:=1/\gamma$, \eqref{eq:junk_converging_to_zero_2} yields
        \begin{align*}
           \limsup\limits_{n\to \infty} \, \Pro^n &\Big( \Big| \int_{\bullet \wedge \tau^{n,k,\ell}_j }^{ \bullet \wedge \rho^{n,k,\ell}_j} \hat\psi^{j,n,k,\ell}_{s-} \, \diff M^n_s\Big|^*_T  > \varepsilon) \; \le \; \frac{d\Lambda}{\varepsilon} \sqrt{\gamma} \, + \,\frac{(d\Lambda+\lambda)(2R+\Gamma)}{\varepsilon} \gamma \, + \, 4\gamma \; \to \; 0
        \end{align*}
        as $\gamma\to 0$. More directly, since the $A^n$ are of finite variation, we also obtain
        $$ \limsup\limits_{n\to \infty} \, \Pro^n \Big( \Big| \int_{\bullet \wedge \tau^{n,k,\ell}_j }^{ \bullet \wedge \rho^{n,k,\ell}_j} \hat\psi^{j,n,k,\ell}_{s-} \, \diff A^n_s\Big|^*_T  > \varepsilon) \; \le \; \limsup\limits_{n\to \infty} \, \Pro^n \Big(  \operatorname{TV}_{[0,T]}(A^n)  > \varepsilon/\gamma\Big) \to 0$$
        as $\gamma \to 0$.
    \end{enumerate}
\end{proof}

Turning to the investigation of the limiting behaviour of the first summand of \eqref{decomp:remainder_integral}, we begin with the following lemma. For a càdlàg process $\tilde{H}^0$, we will define $\tilde{\tau}^{0,k,\ell}_j$ in full analogy to the quantities $\tau^{0,k,\ell}_j$ before \eqref{eq:definition_correction_processes}, simply with $H^0$ replaced by $\tilde{H}^0$.

\begin{lem} \label{lem:conv_relevant_quant_remainder_integrals}
    Suppose $(H^n,X^n)\Rightarrow (\tilde{H}^0,\tilde{X}^0)$ on $(\D_{\R^d}[0,\infty),\dM)^2$. Then, for all $k,\ell\ge 1$ there is weak convergence
    \begin{align}
        &\Big(H^n, X^n, \big(\tau^{n,k,\ell}_j, \lfloor \tau^{n,k,\ell}_j\rfloor, \lceil \tau^{n,k,\ell}_j\rceil, H^n_{\lfloor \tau^{n,k,\ell}_j\rfloor}, H^n_{\lceil \tau^{n,k,\ell}_j\rceil},X^n_{\lfloor \tau^{n,k,\ell}_j\rfloor},X^n_{\lceil \tau^{n,k,\ell}_j\rceil}\big)_{j\ge 1} \Big) \notag \\[1ex]
        & \qquad \Rightarrow \Big(\tilde{H}^0, \tilde{X}^0, \big(\tilde{\tau}^{0,k,\ell}_j, \lfloor \tilde{\tau}^{0,k,\ell}_j\rfloor, \lceil \tilde{\tau}^{0,k,\ell}_j\rceil, \tilde{H}^0_{\lfloor \tilde{\tau}^{0,k,\ell}_j\rfloor}, \tilde{H}^0_{\lceil \tilde{\tau}^{0,k,\ell}_j\rceil},\tilde{X}^0_{\lfloor \tilde{\tau}^{0,k,\ell}_j\rfloor},\tilde{X}^0_{\lceil \tilde{\tau}^{0,k,\ell}_j\rceil}\big)_{j\ge 1} \Big) \label{eq:conv_relevant_quant_remainder_integrals}
    \end{align}
    on $(\D_{\R^d}[0,\infty),\dM)^2 \times ([0,\infty)^3 \times \R^{4d},|\cdot|)^\N$ as $n\to \infty$.
\end{lem}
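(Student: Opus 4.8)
The plan is to transfer the weak-convergence hypothesis to an almost sure statement via Skorokhod's representation theorem and then argue pathwise, the decisive structural feature being that $\Theta_\ell$ is uniformly discrete (indeed $|\Theta_\ell|^{\downarrow}>0$), so that every $\Theta_\ell$-valued quantity on the left-hand side of \eqref{eq:conv_relevant_quant_remainder_integrals} is eventually constant in $n$. First I would invoke Skorokhod's representation theorem to realise $(H^n,X^n)$ and $(\tilde H^0,\tilde X^0)$ on a common probability space $(\bar\Omega,\bar{\mathcal F},\bar\Pro)$ with $(H^n,X^n)\to(\tilde H^0,\tilde X^0)$ $\bar\Pro$-a.s.\ on $(\D_{\R^d}[0,\infty),\dM)^2$. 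Since every quantity appearing in \eqref{eq:conv_relevant_quant_remainder_integrals} is a measurable functional of $(H^n,X^n)$, resp.\ of $(\tilde H^0,\tilde X^0)$, its joint law is unaffected by this coupling, so it suffices to show that the left-hand tuple converges $\bar\Pro$-a.s.\ to the right-hand tuple in the product topology of $(\D_{\R^d}[0,\infty),\dM)^2\times([0,\infty)^3\times\R^{4d},|\cdot|)^{\N}$; almost sure convergence then yields the asserted weak convergence.

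Next, fixing $k,\ell\ge1$, I would restrict attention to the $\bar\Pro$-full event $\Omega^{\ast}$ on which simultaneously: (i) $(H^n,X^n)\to(\tilde H^0,\tilde X^0)$ in $\dM^2$; (ii) neither $\tilde H^0$ nor $\tilde X^0$ has a jump at any point of the countable set $\Theta_\ell$, which is possible since $\Theta_\ell\subseteq[0,\infty)\setminus\operatorname{Disc}_{\Pro^0}(H^0,X^0)$ and $\mathcal L(\tilde H^0,\tilde X^0)=\mathcal L(H^0,X^0)$ (both being weak limits of $\{(H^n,X^n)\}$); and (iii) the regularity furnished by the choice $a_k\notin V^{\Theta_\ell}\cup W$ (Lemmas~\ref{lem:prerequisite_final_suitable_sequence_of_large_increments_stopping_time_defi} and~\ref{lem:existence_of_suitable_sequence_not_hit_by_jumps}), namely that for every $\rho\in\Theta_\ell$ the functional $F_\rho$ sending a càdlàg path $x$ to the first time after $\rho$ at which $\operatorname{sup}\{\,|x^{(i)}_t-x^{(i)}_s|:\rho\vee(t-|\Theta_\ell|)\le s\le t,\,1\le i\le d\,\}$ exceeds $a_k$ is continuous at $\tilde H^0$ in the $M_1$ topology and takes a value outside $\Theta_\ell$.

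On $\Omega^{\ast}$ I would then prove by induction on $j\ge0$ that $\rho^{n,k,\ell}_j\to\tilde\rho^{0,k,\ell}_j$ and, for $j\ge1$, that $\tau^{n,k,\ell}_j\to\tilde\tau^{0,k,\ell}_j$, $\lfloor\tau^{n,k,\ell}_j\rfloor_{\Theta_\ell}\to\lfloor\tilde\tau^{0,k,\ell}_j\rfloor_{\Theta_\ell}$ and $\lceil\tau^{n,k,\ell}_j\rceil_{\Theta_\ell}\to\lceil\tilde\tau^{0,k,\ell}_j\rceil_{\Theta_\ell}$, where $\tilde\rho^{0,k,\ell}_0:=0$ and $\tilde\rho^{0,k,\ell}_j:=\lceil\tilde\tau^{0,k,\ell}_j\rceil_{\Theta_\ell}$ for $j\ge1$. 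The case $j=0$ is trivial since $\rho^{n,k,\ell}_0\equiv0$. For the inductive step, uniform discreteness of $\Theta_\ell$ promotes the inductive hypothesis $\rho^{n,k,\ell}_{j-1}\to\tilde\rho^{0,k,\ell}_{j-1}\in\Theta_\ell$ to $\rho^{n,k,\ell}_{j-1}=\tilde\rho^{0,k,\ell}_{j-1}=:\rho^{\ast}$ for all large $n$; then $\tau^{n,k,\ell}_j=F_{\rho^{\ast}}(H^n)$ and $\tilde\tau^{0,k,\ell}_j=F_{\rho^{\ast}}(\tilde H^0)$, so (iii) gives $\tau^{n,k,\ell}_j\to\tilde\tau^{0,k,\ell}_j\notin\Theta_\ell$. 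Since $x\mapsto\lceil x\rceil_{\Theta_\ell}$ and $x\mapsto\lfloor x\rfloor_{\Theta_\ell}$ are locally constant, hence continuous, at every point of $[0,\infty)\setminus\Theta_\ell$, the rounded quantities converge to the respective rounded limits, in particular $\rho^{n,k,\ell}_j\to\tilde\rho^{0,k,\ell}_j\in\Theta_\ell$, which carries the induction forward.

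Finally I would treat the path evaluations: by uniform discreteness of $\Theta_\ell$ once more, for each $j$ the quantities $\lfloor\tau^{n,k,\ell}_j\rfloor_{\Theta_\ell}$ and $\lceil\tau^{n,k,\ell}_j\rceil_{\Theta_\ell}$ agree, for all large $n$, with their limits $\nu^{\flat}_j,\nu^{\sharp}_j\in\Theta_\ell$, which by (ii) are continuity points of both $\tilde H^0$ and $\tilde X^0$; hence the standard continuity of coordinate projections at continuity points in the $M_1$ topology (\cite[Thm.~12.5.1]{whitt}) yields $H^n_{\nu^{\flat}_j}\to\tilde H^0_{\nu^{\flat}_j}$, $H^n_{\nu^{\sharp}_j}\to\tilde H^0_{\nu^{\sharp}_j}$, and likewise $X^n_{\nu^{\flat}_j}\to\tilde X^0_{\nu^{\flat}_j}$, $X^n_{\nu^{\sharp}_j}\to\tilde X^0_{\nu^{\sharp}_j}$. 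Intersecting these countably many almost sure events over $j\ge1$ and combining with (i) gives $\bar\Pro$-a.s.\ convergence of the entire $\N$-indexed tuple, which is exactly \eqref{eq:conv_relevant_quant_remainder_integrals}. I expect item (iii) to be the real obstacle: the first-exceedance functional $F_\rho$ is genuinely discontinuous in $M_1$ at paths whose jumps are approximated by staggered smaller increments which graze the threshold $a_k$, and ruling out this pathology — as well as the possibility that the crossing lands precisely on $\Theta_\ell$ — is exactly what the deliberate choice $a_k\notin V^{\Theta_\ell}$ is designed to ensure, so the substance of that step is carried by Lemma~\ref{lem:prerequisite_final_suitable_sequence_of_large_increments_stopping_time_defi}.
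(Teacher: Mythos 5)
Your overall route is the paper's: reduce to a pathwise statement (you via Skorokhod representation, the paper via the generalised continuous mapping theorem applied to deterministic $M_1$-convergent paths -- an inessential difference), then run an induction on $j$ in which the continuity of the first-crossing functional $\varsigma_{a_k,\nu,|\Theta_\ell|}$ at the limit path (Lemma \ref{lem:prerequisite_suitable_sequence_of_large_increments_stopping_time_defi}(iv), enabled by $a_k\notin V^{\Theta_\ell}\cup W$ and the choice of $\Theta_\ell$) gives $\tau^{n,k,\ell}_j\to\tilde\tau^{0,k,\ell}_j$, the rounded times are eventually constant, and the evaluations converge because the grid points are a.s.\ continuity points of $(\tilde H^0,\tilde X^0)$.

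However, there is a genuine gap in the step where you dispose of the rounding maps. You assert that the limiting crossing time $\tilde\tau^{0,k,\ell}_j$ lies outside $\Theta_\ell$, and attribute this to the choice $a_k\notin V^{\Theta_\ell}$. That is not what $V^{\Theta_\ell}$ in \eqref{eq:defi_set_V_Theta} controls: it only rules out, almost surely, a jump of the map $a\mapsto\varsigma_{a,\nu,|\Theta_\ell|}(\tilde H^0)$ at $a=a_k$ (the hypothesis $a\notin V_{t,\mu}(\alpha)$ of Lemma \ref{lem:prerequisite_suitable_sequence_of_large_increments_stopping_time_defi}(iv)); it says nothing about where the value $\varsigma_{a_k,\nu,|\Theta_\ell|}(\tilde H^0)$ lands. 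Nothing in the construction excludes $\tilde\Pro^0(\tilde\tau^{0,k,\ell}_j\in\Theta_\ell)>0$: the crossing can occur at a continuity point of $\tilde H^0$ (e.g.\ through gradual accumulation of an increment of size $a_k$ within a window), and $\Theta_\ell$ was only chosen to avoid the \emph{fixed} times of discontinuity of $(H^0,X^0)$; choosing the grid to also avoid atoms of the laws of the $\tilde\tau^{0,k,\ell}_j$ would be circular, since these times depend on $\Theta_\ell$ through $|\Theta_\ell|$ and through the rounding of the earlier times. On the event $\{\tilde\tau^{0,k,\ell}_j\in\Theta_\ell\}$ your argument breaks: the maps $x\mapsto\lceil x\rceil_{\Theta_\ell}$, $x\mapsto\lfloor x\rfloor_{\Theta_\ell}$ are not locally constant at grid points in the naive sense, so the eventual constancy of $\rho^{n,k,\ell}_j$, and with it the identification $\tau^{n,k,\ell}_{j+1}=F_{\rho^\ast}(H^n)$ that drives your induction, is not justified. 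This is exactly the case the paper treats separately (step (2) of its proof, case $\tau^0_j\in\Theta$), and it is the sole reason for the asymmetric definition of $\lceil\cdot\rceil_{\Theta_\ell}$, $\lfloor\cdot\rfloor_{\Theta_\ell}$ with the $|\Theta_\ell|^{\downarrow}/2$ shift at points of $\Theta_\ell$ -- a device the paper announces as serving ``purely technical reasons relevant to the proof'' of this very lemma and which your argument never uses. To close the gap you must either handle the case $\tilde\tau^{0,k,\ell}_j\in\Theta_\ell$ via that modified rounding (as the paper does), or supply a genuine argument that this event is null, which the cited lemmas do not provide.
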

\begin{proof}
    Since we also have $(H^n,X^n)\Rightarrow (H^0,X^0)$, due to the uniqueness of weak limits and the choice of the $\Theta_\ell$ and $a_k$ made at the beginning of Section \ref{sec:proofs}, we recall that in particular
    $$ \Theta_\ell \subseteq  [0,\infty) \setminus \operatorname{Disc}_{\tilde \Pro^0}(\tilde{H}^0, \tilde{X}^0), \quad \tilde \Pro^0\Big( \bigcup_{\nu \in \Theta_\ell} \{ \varsigma_{a_k-,\nu,|\Theta_\ell|}(\tilde{H}^0) < \varsigma_{a_k,\nu,|\Theta_\ell|}(\tilde{H}^0)\} \Big)=0, \quad $$
    as well as $|\Theta_\ell| \notin \{|x-y|:x,y \in \operatorname{Disc}_{\tilde \Pro^0}(\tilde{H}^0, \tilde{X}^0)\}$, for all $\ell,k \ge 1$, where the quantities $\varsigma_{a_k,\nu,|\Theta_\ell|}$ are defined by
    \begin{align*}
    \varsigma_{a_k,\nu,|\Theta_\ell|}(\alpha) \; := \; \operatorname{inf}\{s>\nu \, : \, \operatorname{sup} \{ |\alpha^{(i)}(r)-\alpha^{(i)}(s)| \, : \, t \vee (
    s- |\Theta_\ell|) \le r \le s, \, 1\le i \le d\} > a_k \}.
\end{align*}
    as in \eqref{eq:defi_stopping_times_for_appendix_lemma_on_defi_of_suitable_sequence} in the Appendix.
    By the generalised continuous mapping theorem \cite[Thm.~2.7]{billingsley}, it suffices to prove convergence for the deterministic counterparts of the quantities in \eqref{eq:conv_relevant_quant_remainder_integrals} on $(\D_{\R^d}[0,\infty),\dM)^2$. In alignment with this, fix $k,\ell\ge 1$ and assume $h_n:=H^n$, $x_n:=X^n$ and $h_0:=\tilde{H}^0$, $x_0:=\tilde{X}^0$ to be deterministic càdlàg paths such that $\dM(h_n,h_0)\vee \dM(x_n,x_0)\to 0$, as well as $\Theta_\ell \notin \operatorname{Disc}(h_0,x_0)$, $|\Theta_\ell| \notin \{|x-y|:x,y \in \operatorname{Disc}(h_0,x_0)\}$, and $\varsigma_{a_k-,\nu,|\Theta_\ell|}(h_0) = \varsigma_{a_k,\nu,|\Theta_\ell|}(h_0)$ for all $\nu \in \Theta_\ell$. In order to ease notation, we will omit the indices $k,\ell$ for all respective quantities. In particular, this will apply to the $\tau^{n,k,\ell}_j$, $\Theta_\ell$ and $a_k$.
    \begin{enumerate}[(1)]
       \item \label{it1:conv_rel_quant_remainder_int} As a first step, we note that $\tau^n_1 \to \tau^0_1$. Indeed, $\tau^n_1=\varsigma_{a_k,0,|\Theta_\ell|}(h_n)$ for all $n\ge 0$, $0 \in \Theta$ and the continuity result in Lemma \ref{lem:prerequisite_suitable_sequence_of_large_increments_stopping_time_defi}(iv) yield $\tau^n_1 \to \tau^0_1$.
       \item \label{it2:conv_rel_quant_remainder_int} Recall the definition of $\lceil \tau_j^n\rceil_\Theta$, $\lfloor \tau_j^n\rfloor_\Theta$ and $|\Theta|^{\downarrow}$ from the beginning of Section \ref{sec:proofs}. Suppose $\tau^n_j\to \tau^0_j$ for some $j\ge 1$. Then, clearly $\lceil \tau_j^n\rceil_\Theta = \lceil \tau^0_j\rceil_\Theta$ and $\lfloor \tau_j^n\rfloor_\Theta = \lfloor \tau^0_j\rfloor_\Theta$ for large enough $n$. To see this, first consider the case $\tau^0_j \notin \Theta$. Let $\nu_1, \nu_2 \in \Theta$ such that $\nu_1 < \tau^0_j < \nu_2$ and $(\nu_1, \nu_2) \cap \Theta=\emptyset$, implying $\lceil \tau^0_j\rceil_\Theta=\nu_2$, $\lfloor \tau^0_j\rfloor_\Theta=\nu_1$. Then, $\tau^n_j \in (\nu_1, \nu_2)$ for large enough $n$ and we immediately obtain $\lceil \tau^n_j\rceil_\Theta=\nu_2$, $\lfloor \tau^n_j\rfloor_\Theta=\nu_1$. Now, consider the case $\tau^0_j \in \Theta$. Let $\nu_1, \nu_2 \in \Theta$ such that $\nu_1 < \tau^0_j < \nu_2$ and $(\nu_1, \nu_2) \cap \Theta=\{\tau^0_j\}$. Hence, by definition of $|\Theta|^{\downarrow}$ it holds for all $t \in (\tau^0_j-|\Theta|^{\downarrow}/2,\tau^0_j+|\Theta|^{\downarrow}/2)$ that $\lfloor t\rfloor_{\Theta}=\nu_1$ and $\lceil t\rceil_{\Theta}=\nu_2$, since $\nu_1\le t-|\Theta|^{\downarrow}/2<\tau^0_j$ and $\nu_2>t+|\Theta|^{\downarrow}/2\le \nu_2$. We deduce the claim for large enough $n$.
       \item \label{it3:conv_rel_quant_remainder_int} Suppose  $\lceil \tau_j^n\rceil_\Theta = \lceil \tau^0_j\rceil_\Theta$ for large enough $n$. This implies $\tau^0_{j+1}=\varsigma_{a_k,\lceil \tau^0_j\rceil_\Theta,|\Theta_\ell|}(h_0)$, $\tau^n_{j+1}=\varsigma_{a_k,\lceil \tau^0_j\rceil_\Theta,|\Theta_\ell|}(h_n)$, $\lceil \tau^0_j\rceil_\Theta \in \Theta$, for large enough $n$. As in \eqref{it1:conv_rel_quant_remainder_int}, we obtain $\tau^{n}_{j+1} \to \tau^0_{j+1}$ by Lemma \ref{lem:prerequisite_suitable_sequence_of_large_increments_stopping_time_defi}(iv). Induction from \eqref{it1:conv_rel_quant_remainder_int} and \eqref{it2:conv_rel_quant_remainder_int} finally yields $\tau^n_j\to \tau^0_j$, $\ind_{\{\lceil \tau_j^n\rceil_\Theta = \lceil \tau^0_j\rceil_\Theta\}}\to 1$ and $\ind_{\{\lfloor \tau_j^n\rfloor_\Theta =\lfloor \tau^0_j\rfloor_\Theta\}} \to 1$ as $n\to \infty$ for all $j\ge 1$.
       \item \label{it4:conv_rel_quant_remainder_int} It is well-known that $\dM(h_n,h_0) \to 0$ as $n\to \infty$ implies $h_n(t) \to h_0(t)$ for all $t \notin \operatorname{Disc}(h_0)$ (see e.g. \cite[Thm.~12.5.1(v)]{whitt}). Since $\Theta \subseteq [0,\infty) \setminus \operatorname{Disc}(h_0)$ and $\lceil \tau^0_j\rceil_\Theta, \lfloor \tau^0_j\rfloor_\Theta \in \Theta$ for all $j\ge 1$, we deduce $h_n( \lceil \tau^0_j\rceil_\Theta) \to h_0( \lceil \tau^0_j\rceil_\Theta)$, $h_n( \lfloor \tau^0_j\rfloor_\Theta) \to h_0( \lfloor \tau^0_j\rfloor_\Theta)$ for all $j\ge 1$. Due to \eqref{it3:conv_rel_quant_remainder_int}, $\lceil \tau^n_j\rceil_\Theta=\lceil \tau^0_j\rceil_\Theta$ and $\lfloor \tau^n_j\rfloor_\Theta = \lfloor \tau^0_j\rfloor_\Theta$ for large enough $n$, therefore implying $h_n(\lceil\tau^n_j\rceil_\Theta) \to h_0(\lceil\tau^0_j \rceil_\Theta)$ and $h_n(\lfloor\tau^n_j\rfloor_\Theta) \to h_0(\lfloor \tau^0_j \rfloor_\Theta)$ as $n\to \infty$, for every $j\ge 1$. Finally, apply the same argument to $x_n$, $x_0$.
   \end{enumerate}
\end{proof}

Toward the next proposition, let us define 
\begin{align} Y^{j,n,k,\ell}_t \; := \; \int_{t \wedge \tau^{n,k,\ell}_j }^{ t \wedge \rho^{n,k,\ell}_j} \zeta^{j,n,k,\ell}_{s-} \, \diff \xi^{j,n,k,\ell}_s  \label{eq:defi_scaling_terms_Y_n}
\end{align}
for each $j,n,k,\ell \ge 1$ and $t\ge 0$.
\begin{prop} \label{prop:conv_remainder_integrals_2}
    Let $c>0$. There exists a subsequence of $\{(H^n,X^n)\}$ (which, for conciseness of notation, we will denote as the original sequence) and a probability space $(\tilde{\Omega}^0, \tilde{\mathcal{F}}^0, \tilde{\Pro}^0)$ such that 
        \begin{align}
        &\Big(H^n, X^n, \big(\tau^{n,k,\ell}_j, \lfloor \tau^{n,k,\ell}_j\rfloor, \lceil \tau^{n,k,\ell}_j\rceil, H^n_{\lfloor \tau^{n,k,\ell}_j\rfloor}, H^n_{\lceil \tau^{n,k,\ell}_j\rceil},X^n_{\lfloor \tau^{n,k,\ell}_j\rfloor},X^n_{\lceil \tau^{n,k,\ell}_j\rceil}, Y^{j,n,k,\ell} \big)_{j,k,\ell\ge 1} \Big) \notag \\[1ex]
        & \qquad \Rightarrow \Big(\tilde{H}^0, \tilde{X}^0, \big(\tilde{\tau}^{0,k,\ell}_j, \lfloor \tilde{\tau}^{0,k,\ell}_j\rfloor, \lceil \tilde{\tau}^{0,k,\ell}_j\rceil, \tilde{H}^0_{\lfloor \tilde{\tau}^{0,k,\ell}_j\rfloor}, \tilde{H}^0_{\lceil \tilde{\tau}^{0,k,\ell}_j\rceil},\tilde{X}^0_{\lfloor \tilde{\tau}^{0,k,\ell}_j\rfloor},\tilde{X}^0_{\lceil \tilde{\tau}^{0,k,\ell}_j\rceil},\tilde{Y}^{j,0,k,\ell}\big)_{j,k,\ell\ge 1} \Big) \label{eq:conv_relevant_quant_remainder_integrals_2}
    \end{align}
    on $(\D_{\R^d}[0,\infty),\dM)^2 \times \big(([0,\infty)^3 \times \R^{4d},|\cdot|)\times (\D_{\R^d}[-c,\infty),\dM)\big)^\N$ as $n\to \infty$, where the limiting quantities are all defined on $(\tilde{\Omega}^0, \tilde{\mathcal{F}}^0,  \tilde{\Pro}^0)$. In particular, in this case there is $\mathcal{L} (\tilde H^{0}, \tilde X^{0})= \mathcal{L} (H^0, X^0)$.
\end{prop}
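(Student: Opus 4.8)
The plan is to invoke Prokhorov's theorem for the entire, countably indexed tuple appearing on the left-hand side of \eqref{eq:conv_relevant_quant_remainder_integrals_2}, regarded as a random element of $(\D_{\R^d}[0,\infty),\dM)^2\times\big(([0,\infty)^3\times\R^{4d},|\cdot|)\times(\D_{\R^d}[-c,\infty),\dM)\big)^\N$, which is Polish because $(\D_{\R^d},\dM)$ is separable and topologically complete and a countable product of Polish spaces carries a Polish product topology. Tightness of the tuple is then equivalent to tightness of every coordinate marginal, so I need only three ingredients. First, $\{(H^n,X^n)\}$ is tight, being weakly convergent by hypothesis. Second, for each fixed $j,k,\ell\ge 1$ the $([0,\infty)^3\times\R^{4d})$-valued vector $\big(\tau^{n,k,\ell}_j,\lfloor\tau^{n,k,\ell}_j\rfloor,\lceil\tau^{n,k,\ell}_j\rceil,H^n_{\lfloor\tau^{n,k,\ell}_j\rfloor},H^n_{\lceil\tau^{n,k,\ell}_j\rceil},X^n_{\lfloor\tau^{n,k,\ell}_j\rfloor},X^n_{\lceil\tau^{n,k,\ell}_j\rceil}\big)$ is tight, since by Lemma \ref{lem:conv_relevant_quant_remainder_integrals}, applied to $(H^n,X^n)\Rightarrow(H^0,X^0)$, it even converges weakly, jointly with $(H^n,X^n)$. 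Third --- the one point where the $M_1$ topology is essential --- for each $j,k,\ell$ the sequence $\{Y^{j,n,k,\ell}\}_n$ from \eqref{eq:defi_scaling_terms_Y_n}, viewed as an element of $\D_{\R^d}[-c,\infty)$ (it vanishes on $[-c,0)$), is $M_1$-tight: by the constructions of Lemmas \ref{lem:approx_of_M1_by monotone_pieces_X} and \ref{lem:approx_of_M1_by monotone_pieces_H}, carried out coordinatewise, $\xi^{j,n,k,\ell}$ and $\zeta^{j,n,k,\ell}$ are coordinatewise non-decreasing with values in $[0,1]^d$, hence $Y^{j,n,k,\ell}=\int\zeta^{j,n,k,\ell}_{s-}\diff\xi^{j,n,k,\ell}_s$ is a coordinatewise non-decreasing process bounded by $1$; a uniformly bounded family of coordinatewise non-decreasing càdlàg paths is $M_1$-relatively compact, as follows from Helly's selection theorem together with the fact that, for non-decreasing limits, pointwise convergence on a dense set of times entails $M_1$-convergence (\cite[Ch.~12]{whitt}).

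Once coordinatewise tightness is in hand, the whole tuple is tight and Prokhorov's theorem yields a weakly convergent subsequence, which I relabel as the original sequence; its limit, written as on the right-hand side of \eqref{eq:conv_relevant_quant_remainder_integrals_2}, is realised on a single probability space $(\tilde\Omega^0,\tilde{\mathcal{F}}^0,\tilde\Pro^0)$ carrying all the limiting objects at once. Projecting onto the $(H,X)$-coordinate and using that a subsequence of $(H^n,X^n)\Rightarrow(H^0,X^0)$ still converges to $(H^0,X^0)$, uniqueness of weak limits gives $\mathcal{L}(\tilde H^0,\tilde X^0)=\mathcal{L}(H^0,X^0)$, which is the final assertion. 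For the discrete components I again use uniqueness: for each fixed $k,\ell$, Lemma \ref{lem:conv_relevant_quant_remainder_integrals} identifies the joint weak limit of $\big(H^n,X^n,(\tau^{n,k,\ell}_j,\lfloor\tau^{n,k,\ell}_j\rfloor,\lceil\tau^{n,k,\ell}_j\rceil,H^n_{\lfloor\tau^{n,k,\ell}_j\rfloor},\dots)_j\big)$ as the same functional evaluated at $(\tilde H^0,\tilde X^0)$, so the corresponding marginal of the extracted limit must coincide with $\big(\tilde H^0,\tilde X^0,(\tilde\tau^{0,k,\ell}_j,\lfloor\tilde\tau^{0,k,\ell}_j\rfloor,\lceil\tilde\tau^{0,k,\ell}_j\rceil,\tilde H^0_{\lfloor\tilde\tau^{0,k,\ell}_j\rfloor},\dots)_j\big)$, i.e.\ exactly the claimed form. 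The remaining limits $\tilde Y^{j,0,k,\ell}$ are then merely the coordinate limits of $Y^{j,n,k,\ell}$ on $(\tilde\Omega^0,\tilde{\mathcal{F}}^0,\tilde\Pro^0)$, and the proposition does not require their law to be pinned down here.

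I expect the main difficulty to be organisational rather than analytic: since one probability space $(\tilde\Omega^0,\tilde{\mathcal{F}}^0,\tilde\Pro^0)$ must carry the limits for \emph{all} indices $(j,k,\ell)$ in a mutually consistent way --- in particular all $\tilde\tau^{0,k,\ell}_j$ built from one and the same pair $(\tilde H^0,\tilde X^0)$ --- one cannot pass to limits index by index but must prove tightness of the full countable tuple in a single stroke. The only genuinely topological input making this feasible is the $M_1$-relative compactness of the monotone, uniformly bounded processes $Y^{j,n,k,\ell}$, which has no $J_1$ analogue, together with Lemma \ref{lem:conv_relevant_quant_remainder_integrals} for the finite-dimensional data; everything else reduces to Prokhorov's theorem and uniqueness of weak limits.
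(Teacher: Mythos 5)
Your proof is correct and follows essentially the same strategy as the paper. The only organizational difference is that you apply Prokhorov's theorem directly to the full countably-indexed tuple, using the standard fact that on a countable product of Polish spaces coordinatewise tightness implies tightness of the whole tuple (Tychonoff plus a $2^{-i}\varepsilon$ covering); the paper instead applies Prokhorov to each finite tuple $\mathfrak{S}^n_L$ (length $2+8L^3$), extracts a diagonal subsequence, and then invokes the Kolmogorov extension theorem to assemble the limiting laws $\mu_L$ into one infinite-product measure. These two routes are equivalent in this Polish setting and yours is the cleaner one. The remaining ingredients match: you identify all marginals except the $\tilde Y^{j,0,k,\ell}$ via Lemma \ref{lem:conv_relevant_quant_remainder_integrals} and uniqueness of weak limits, and you handle the $Y^{j,n,k,\ell}$ by exploiting their coordinatewise monotonicity and uniform boundedness. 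On that last point the paper is more careful in one small respect: it first obtains weak convergence in the product space $(\D_{\R}[-c,\infty),\dM)^d$ and only afterwards upgrades to the stronger $(\D_{\R^d}[-c,\infty),\dM)$ by establishing (Lemma \ref{lem:properties_of_weighing_limit_terms}) that the \emph{limit} processes are coordinatewise monotone, so that product $M_1$ and strong $M_1$ agree (\cite[Thm.~12.7.3]{whitt}); your Helly-type argument gets this in one step and is fine, but it is worth noting explicitly that the product-to-strong upgrade relies on the monotonicity of the limiting paths, not just of the pre-limit paths.
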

\begin{proof}
For each $j,k,\ell\ge 1$, the processes $Y^{j,n,k,\ell}$, $n\ge 1$, are coordinatewise non-decreasing and, by construction of $\zeta^{j,n,k,\ell}, \xi^{j,n,k,\ell}$, almost surely bounded by one in each component. By naturally extending the processes $Y^{j,n,k,\ell}$ to $[-c,\infty)$ for some $c>0$, each component is tight in $(\D_{\R}[-c,\infty),\dM)$. Indeed, the $M_1$ tightness criteria \eqref{it1:tightness_of_corrected_integrands_1}-\eqref{it3:tightness_of_corrected_integrands_1} in the proof of Lemma \ref{lem:tightness_of_corrected_integrands_1} can be seen to be immediately satisfied, where the local uniform convergence at zero is simply replaced by local uniform convergence at $-c$. Now, coordinatewise tightness is sufficient for tightness of the $Y^{j,n,k,\ell}$ on the product space $(\D_{\R}[-c,\infty),\dM)^d$, due to the characterisation of compact sets in product Hausdorff spaces as precisely those closed sets which are contained in the Cartesian product of compact sets of the individual spaces. In this case, that is $Q$ is compact in $(\D_{\R}[-c,\infty), \dM)^d$ if and only if $Q$ is closed and $Q \subseteq K_1\times ...\times K_d$ with each $K_i$ compact in $(\D_{\R}[-c,\infty), \dM)$.\\
Repeating this argument, we obtain the tightness of the sequence 
\begin{align*}
    \mathfrak{S}^{n}_L := \Big(H^n, X^n, \big(\tau^{n,k,\ell}_j, \lfloor \tau^{n,k,\ell}_j\rfloor, \lceil \tau^{n,k,\ell}_j\rceil, H^n_{\lfloor \tau^{n,k,\ell}_j\rfloor}, H^n_{\lceil \tau^{n,k,\ell}_j\rceil},X^n_{\lfloor \tau^{n,k,\ell}_j\rfloor},X^n_{\lceil \tau^{n,k,\ell}_j\rceil}, Y^{j,n,k,\ell} \big)_{j,\ell,k= 1,...,L} \Big),
\end{align*}
$n\ge 1$, on the respective product spaces $$\mathcal{S}_L:=(\D_{\R^d}[0,\infty),\dM)^2 \times \big(([0,\infty)^3 \times \R^{4d},|\cdot|)\times (\D_{\R}[-c,\infty),\dM)^d\big)^{L^3}$$ endowed with their product Borel $\sigma$-algebras. By Prokhorov's theorem, to $\{\mathfrak{S}^{n}_L\}_{n\ge 1}$ there exists a subsequence (which we will denote as the original sequence) such that the law of $\mathfrak{S}^{n}_L$ converges weakly, that is
$$ \mathcal{L}(\mathfrak{S}^{n}_L) \; \Rightarrow \; \mu_L$$
as $n\to \infty$. It is straightforward to verify that the family $\{\mu_L\}$ satisfies the assumptions of the Kolmogorov extension theorem, for example by the continuous mapping theorem. Hence, there exists a probability space $(\tilde{\Omega}^0, \tilde{\mathcal{F}}^0, \tilde\Pro^0)$ and tuples
\begin{align*}
    \tilde{\mathfrak{S}}^{0} := \Big(\tilde{H}^0, \tilde{X}^0, \big(\tilde{\tau}^{0,k,\ell}_j, \lfloor \tilde{\tau}^{0,k,\ell}_j\rfloor, \lceil \tilde{\tau}^{0,k,\ell}_j\rceil, \tilde{H}^0_{\lfloor \tilde{\tau}^{0,k,\ell}_j\rfloor}, \tilde{H}^0_{\lceil \tilde{\tau}^{0,k,\ell}_j\rceil},\tilde{X}^0_{\lfloor \tilde{\tau}^{0,k,\ell}_j\rfloor},\tilde{X}^0_{\lceil \tilde{\tau}^{0,k,\ell}_j\rceil},\tilde{Y}^{j,0,k,\ell}\big)_{j,k,\ell\ge 1} \Big)
\end{align*}
on this space taking values in the product space
$$ (\D_{\R^d}[0,\infty),\dM)^2 \times \big(([0,\infty)^3 \times \R^{4d},|\cdot|)\times (\D_{\R}[-c,\infty),\dM)^d\big)^{\N} $$
endowed with its product Borel $\sigma$-algebra, such that the pushforward measure through the restrictions of $\mathcal{L}(\tilde{\mathfrak{S}}^{0})$ to the respective first $2+8L^3$ components coincides with $\mu_L$.
Indeed, the concrete specified form of the tuple $\tilde{\mathfrak{S}}^{0}$---except for the quantities $\tilde{Y}^{j,0,k,\ell}$---follows directly from the continuity of coordinate restrictions, the continuous mapping theorem \cite[Thm.~2.7]{billingsley} and Lemma \ref{lem:conv_relevant_quant_remainder_integrals}. Moreover, we clearly have $\mathcal{L} (\tilde H^{0}, \tilde X^{0})=\mathcal{L} (H^0, X^0)$. As per Lemma \ref{lem:properties_of_weighing_limit_terms} below, the limiting $\tilde{Y}^{j,0,k,\ell}$ are coordinatewise non-decreasing, hence yielding their convergence in the stronger space $(\D_{\R^d}[-c,\infty),\dM)$ (see e.g. \cite[Thm.~12.7.3]{whitt}).
\end{proof}

We will collect a crucial property of the processes $\tilde{Y}^{j,0,k,\ell}$ that arise in the limit of \eqref{eq:conv_relevant_quant_remainder_integrals_2}.
\begin{lem} \label{lem:properties_of_weighing_limit_terms} Under the notation of Proposition \ref{prop:conv_remainder_integrals_2}, it holds that for all $j,k,\ell \ge 1$, 
    $$ \tilde\Pro^0\Big( \tilde{Y}^{j,0,k,\ell} = 0 \text{ on } [0,\tilde{\tau}^{0,k,\ell}_j) \text{ and } \tilde{Y}^{j,0,k,\ell} \in [0,1]^d \text{ is constant on } (\lceil \tilde{\tau}^{0,k,\ell}_j\rceil,\infty) \Big) \; = \; 1 $$
    and the process $\tilde{Y}^{j,0,k,\ell}$ is almost surely coordinatewise non-decreasing.
\end{lem}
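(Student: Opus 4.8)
The plan is to read off the four relevant pathwise properties at the level of the prelimit processes $Y^{j,n,k,\ell}$ and then transport them to the weak limit $\tilde Y^{j,0,k,\ell}$. Fix $j,k,\ell\ge 1$. From the definition \eqref{eq:defi_scaling_terms_Y_n} together with the construction of $\zeta^{j,n,k,\ell}$ and $\xi^{j,n,k,\ell}$, every $Y^{j,n,k,\ell}$ with $n\ge 1$ almost surely (i) is coordinatewise non-decreasing in $t$, being a Lebesgue--Stieltjes integral of the coordinatewise non-negative integrand $\zeta^{j,n,k,\ell}_{s-}$ (it is non-decreasing and starts from $0$) against the coordinatewise non-decreasing integrator $\xi^{j,n,k,\ell}$; (ii) takes values in $[0,1]^d$, as already recorded in the proof of Proposition~\ref{prop:conv_remainder_integrals_2}; (iii) vanishes on $[0,\tau^{n,k,\ell}_j)$, since there $t\wedge\tau^{n,k,\ell}_j = t = t\wedge\rho^{n,k,\ell}_j$; and (iv) is constant on $[\rho^{n,k,\ell}_j,\infty) = [\lceil\tau^{n,k,\ell}_j\rceil,\infty)$, the integral then being taken over the fixed interval $[\tau^{n,k,\ell}_j,\rho^{n,k,\ell}_j]$. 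These are precisely the prelimit analogues of the asserted properties.

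Since the product space in \eqref{eq:conv_relevant_quant_remainder_integrals_2} is Polish, I would invoke Skorokhod's representation theorem to realise versions of the prelimit and limiting tuples on a common probability space along which the convergence in \eqref{eq:conv_relevant_quant_remainder_integrals_2} holds almost surely; denote the resulting versions by $\bar Y^{j,n,k,\ell}$, $\bar\tau^{n,k,\ell}_j$, $\lceil\bar\tau^{n,k,\ell}_j\rceil$ for $n\ge 0$, so that $\bar Y^{j,n,k,\ell}\to\bar Y^{j,0,k,\ell}$ in $(\D_{\R^d}[-c,\infty),\dM)$ together with $\bar\tau^{n,k,\ell}_j\to\bar\tau^{0,k,\ell}_j$ and $\lceil\bar\tau^{n,k,\ell}_j\rceil\to\lceil\bar\tau^{0,k,\ell}_j\rceil$ almost surely, and with the same joint law as the original limiting tuple. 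As the assertion only concerns that joint law, it suffices to verify it for these versions. Monotonicity passes to the limit immediately, since the coordinatewise non-decreasing càdlàg functions form a closed subset of $(\D_{\R^d}[-c,\infty),\dM)$ (cf.~\cite[Thm.~12.7.3]{whitt}); in particular $\bar Y^{j,0,k,\ell}$ then has a dense set of continuity points.

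On a full-measure event carrying all of the above, I would conclude as follows. At each continuity point $t$ of $\bar Y^{j,0,k,\ell}$ we have $\bar Y^{j,n,k,\ell}_t\to\bar Y^{j,0,k,\ell}_t$ by \cite[Thm.~12.5.1(v)]{whitt}; with (ii) and the closedness of $[0,1]^d$ this gives $\bar Y^{j,0,k,\ell}_t\in[0,1]^d$ at continuity points, hence everywhere by right-continuity. For the vanishing property, fix a continuity point $t<\bar\tau^{0,k,\ell}_j$; since $\bar\tau^{n,k,\ell}_j\to\bar\tau^{0,k,\ell}_j>t$ we have $t<\bar\tau^{n,k,\ell}_j$ for all large $n$, so $\bar Y^{j,n,k,\ell}_t = 0$ by (iii) and thus $\bar Y^{j,0,k,\ell}_t = 0$; by density of the continuity points and right-continuity, $\bar Y^{j,0,k,\ell}\equiv 0$ on $[0,\bar\tau^{0,k,\ell}_j)$. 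For constancy, fix continuity points $\lceil\bar\tau^{0,k,\ell}_j\rceil < t < r$ of $\bar Y^{j,0,k,\ell}$; since $\lceil\bar\tau^{n,k,\ell}_j\rceil\to\lceil\bar\tau^{0,k,\ell}_j\rceil < t$ we get $\lceil\bar\tau^{n,k,\ell}_j\rceil\le t<r$ for large $n$, so $\bar Y^{j,n,k,\ell}_t = \bar Y^{j,n,k,\ell}_r$ by (iv) and therefore $\bar Y^{j,0,k,\ell}_t = \bar Y^{j,0,k,\ell}_r$; density and right-continuity then force $\bar Y^{j,0,k,\ell}$ to be constant on $(\lceil\bar\tau^{0,k,\ell}_j\rceil,\infty)$. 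Collecting these four facts and transferring back through equality of laws yields the claim.

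The only point that genuinely requires care is the coupling in the last paragraph between $M_1$ path convergence---which delivers pointwise convergence only at continuity points of the limit---and the joint convergence of the stopping times $\bar\tau^{n,k,\ell}_j$ and $\lceil\bar\tau^{n,k,\ell}_j\rceil$ furnished by Lemma~\ref{lem:conv_relevant_quant_remainder_integrals}. The monotonicity of the $Y$'s is exactly what renders this harmless: it guarantees a dense set of continuity points and, as already observed in Proposition~\ref{prop:conv_remainder_integrals_2}, upgrades the $M_1$ convergence to the stronger uniform-on-compacts mode for monotone paths, after which the strict inequalities $t<\bar\tau^{0,k,\ell}_j$ and $t>\lceil\bar\tau^{0,k,\ell}_j\rceil$ can be propagated down to the prelimit without difficulty.
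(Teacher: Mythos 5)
Your proof is correct, and it takes a genuinely different route from the paper. The paper transfers the path properties distributionally: for the vanishing of $\tilde Y^{j,0,k,\ell}$ on $[0,\tilde\tau^{0,k,\ell}_j)$ it constructs the $M_1\times|\cdot|$-open set
$B=\bigcup_{t>0}\bigl(\{\alpha:\exists s\in[0,t),\ \alpha(s)\neq 0\}\times(t,\infty)\bigr)$,
notes $\{(Z,\sigma)\in B\}=\{\exists s\in[0,\sigma):Z_s\neq 0\}$, and then applies Portmanteau together with the prelimit identity $Y^{j,n,k,\ell}\equiv 0$ on $[0,\tau^{n,k,\ell}_j)$ to bound $\tilde\Pro^0$ of the limiting exceptional event by a $\liminf$ that equals zero; the constancy on $(\lceil\tilde\tau^{0,k,\ell}_j\rceil,\infty)$ is handled by a symmetric open set $C$, and monotonicity by closedness of the set of non-decreasing càdlàg paths. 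You instead go pathwise: Skorokhod representation on the (Polish) product space, then pointwise convergence at continuity points of the limit combined with convergence of $\bar\tau^{n,k,\ell}_j$ and $\lceil\bar\tau^{n,k,\ell}_j\rceil$. Both arguments rest on the same core facts ($M_1$ convergence gives pointwise convergence at continuity points of the limit; non-decreasing càdlàg limits form a closed class), so neither is deeper than the other; the Portmanteau route is more compact and dispenses with the coupling, whereas your route is more explicit about where the strict inequalities $t<\bar\tau^{0,k,\ell}_j$ and $t>\lceil\bar\tau^{0,k,\ell}_j\rceil$ enter. You correctly flag that density of continuity points (supplied by monotonicity) is what makes the pointwise transfer unproblematic, and the remaining steps (right-continuity, closedness of $[0,1]^d$) are handled correctly.
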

\begin{proof}
    Fix $j,k,\ell \ge 1$ and define the set 
    $$ B \; := \; \union{t > 0}{} \Big( \{ \alpha \in \D_{\R^d}[0,\infty): \exists s \in [0,t) \text{ s.t. } \alpha(s)\neq 0\} \times (t,\infty) \Big)$$
    which is open in the product topology of $(\D_{\R}[0,\infty), \dM)^d \times ([0,\infty),|\cdot|)$ and clearly
    $$ \{ (Z,\sigma) \in B\} \; = \; \{ \exists s \in [0, \sigma) : Z_s \neq 0\}. $$
    Since $(Y^{j,n,k,\ell}, \tilde{\tau}^{n,k,\ell}_j)\Rightarrow (\tilde{Y}^{j,0,k,\ell}, \tilde{\tau}^{0,k,\ell}_j)$ according to Proposition \ref{prop:conv_remainder_integrals_2}, making use of the Portmanteau theorem yields
    \begin{align*}
        \tilde{\Pro}^0 &(\exists s \in [0, \tilde{\tau}^{0,k,\ell}_j) : \tilde{Y}^{j,0,k,\ell}_s \neq 0) \; \le \; \liminf_{n\to \infty} \; \Pro^{n} \Big(\exists s \in [0, \tau^{n,k,\ell}_j) : Y^{j,n,k,\ell}_s \neq 0\Big) \; = \; 0,
    \end{align*}
    where the last equation follows directly from the definition of the $Y^{j,n,k,\ell}$ in \eqref{eq:defi_scaling_terms_Y_n}.
    One now proceeds in the same way with $(\tilde{Y}^{j,0,k,\ell}, \lceil\tilde{\tau}^{0,k,\ell}_j\rceil )$ for the open set 
    $$ C \; := \; \union{t > 0}{} \Big( \{ \alpha \in \D_{\R^d}[0,\infty): \exists s_1,s_2 \in (t,\infty) \text{ s.t. } \alpha(s_1)\neq \alpha(s_2)\} \times (0,t) \Big)$$
    in order to complete the proof of the first part of the lemma. The monotonicity follows similarly due to the fact that the set of non-decreasing càdlàg paths is a closed subset of the $M_1$ Skorokhod space.
\end{proof}

Recall the definition of stopping times $\{\tilde{\tau}^{0,k}_j\}_{k,j\ge 1}$ and $\mathcal{T}(\tilde{H}^0)=\{\tilde{\sigma}^{0,k}_j\}_{k,j\ge 1}$ from \eqref{eq:def_limiting_stopping_times_tightness_without_AVCI}. 

\begin{lem} \label{lem:conv_weighing_quant_remainder_integrals_2}
    To every subsequence of $\{(H^n,X^n)\}$ such that there is convergence \eqref{eq:conv_relevant_quant_remainder_integrals_2}, there exists a subsequence of the natural numbers $\{\ell_r\}$ and càdlàg processes $\tilde{Y}^{j,0,k}$, $j,k\ge 1$, on a common probability space such that 
    $\tilde{Y}^{j,0,k,\ell_r} \; \Rightarrow \; \tilde{Y}^{j,0,k}$ as $r\to \infty$ on $(\D_{\R^d}[0,\infty), \dM)$ for all $j,k\ge 1$. Without loss of generality, we may assume the common probability space to be $(\tilde \Omega^0, \tilde{\mathcal{F}}^0, \tilde\Pro^0)$ from Proposition \ref{prop:conv_remainder_integrals_2} and that  
    \begin{align} 
        & \Big(\tilde{H}^0, \tilde{X}^0, \big(\tilde{\tau}^{0,k,\ell_r}_j, \lfloor \tilde{\tau}^{0,k,\ell_r}_j\rfloor, \lceil \tilde{\tau}^{0,k,\ell_r}_j\rceil, \tilde{H}^0_{\lfloor \tilde{\tau}^{0,k,\ell_r}_j\rfloor}, \tilde{H}^0_{\lceil \tilde{\tau}^{0,k,\ell_r}_j\rceil},\tilde{X}^0_{\lfloor \tilde{\tau}^{0,k,\ell_r}_j\rfloor},\tilde{X}^0_{\lceil \tilde{\tau}^{0,k,\ell_r}_j\rceil},\tilde{Y}^{j,0,k,\ell_r}\big)_{j,k\ge 1} \Big) \notag \\
        & \qquad \qquad \Rightarrow \;  \Big(\tilde{H}^0, \tilde{X}^0, \big(\tilde{\tau}^{0,k}_j,  \tilde{\tau}^{0,k}_j, \tilde{\tau}^{0,k}_j, \tilde{H}^0_{\tilde{\tau}^{0,k}_j-}, \tilde{H}^0_{\tilde{\tau}^{0,k}_j},\tilde{X}^0_{\tilde{\tau}^{0,k}_j-},\tilde{X}^0_{ \tilde{\tau}^{0,k}_j},\tilde{Y}^{j,0,k}\big)_{j,k\ge 1} \Big)
    \label{eq:conv_weights_quant_remainder_integral_2}
    \end{align}
    on $(\D_{\R^d}[0,\infty),\dM)^2 \times \big(([0,\infty)^3 \times \R^{4d},|\cdot|)\times (\D_{\R^d}[-c,\infty),\dM)\big)^\N$ as $r \to \infty$. Moreover, each $\tilde{Y}^{j,0,k} \in [0,1]^d$ is a single-jump process and can be written as
    $$ \tilde{Y}^{j,0,k} \; = \; \tilde{Y}^{j,0,k}_{\tilde{\tau}^{0,k}_j} \ind_{[\tilde{\tau}^{0,k}_j,\, \infty)}.$$
    
\end{lem}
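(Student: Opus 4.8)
The plan is to first understand how the mesh-dependent book-keeping objects behave as $|\Theta_\ell|\to0$, then extract a subsequence in $\ell$ along which the weighing processes converge, and finally read the structure of the limit off Lemma~\ref{lem:properties_of_weighing_limit_terms}. Throughout I work on the probability space $(\tilde\Omega^0,\tilde{\mathcal{F}}^0,\tilde\Pro^0)$ of Proposition~\ref{prop:conv_remainder_integrals_2}. \emph{Step 1 (asymptotics of the detection times).} I would first show that, $\tilde\Pro^0$-a.s., for each $j,k\ge1$ there is $\ell_0$ with $\tilde\tau^{0,k,\ell}_j=\tilde\tau^{0,k}_j$ for all $\ell\ge\ell_0$, while $\lfloor\tilde\tau^{0,k,\ell}_j\rfloor_{\Theta_\ell}\uparrow\tilde\tau^{0,k}_j$ and $\lceil\tilde\tau^{0,k,\ell}_j\rceil_{\Theta_\ell}\downarrow\tilde\tau^{0,k}_j$ strictly. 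This is where the a priori choices of Section~\ref{sec:proofs} enter: for a fixed càdlàg path the backward-$|\Theta_\ell|$ oscillation at a time $t$ tends to $|\Delta\tilde H^0_t|$, which for $t<\tilde\tau^{0,k}_j$ stays strictly below $a_k$ (by the choice $a_k\notin V^{\Theta_\ell}\cup W$ of Lemmas~\ref{lem:prerequisite_final_suitable_sequence_of_large_increments_stopping_time_defi} and~\ref{lem:existence_of_suitable_sequence_not_hit_by_jumps}) and at $t=\tilde\tau^{0,k}_j$ exceeds $a_k$; an induction in $j$ as in steps~\eqref{it1:conv_rel_quant_remainder_int}--\eqref{it3:conv_rel_quant_remainder_int} of the proof of Lemma~\ref{lem:conv_relevant_quant_remainder_integrals}, using the continuity result Lemma~\ref{lem:prerequisite_suitable_sequence_of_large_increments_stopping_time_defi}(iv), closes this. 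Since $\tilde\tau^{0,k}_j\notin\bigcup_\ell\Theta_\ell$ a.s., left- and right-continuity of $\tilde H^0,\tilde X^0$ then give $\tilde H^0_{\lfloor\tilde\tau^{0,k,\ell}_j\rfloor}\to\tilde H^0_{\tilde\tau^{0,k}_j-}$, $\tilde H^0_{\lceil\tilde\tau^{0,k,\ell}_j\rceil}\to\tilde H^0_{\tilde\tau^{0,k}_j}$, and likewise for $\tilde X^0$; in particular the whole non-$\tilde Y$ part of the tuple in~\eqref{eq:conv_weights_quant_remainder_integral_2} converges $\tilde\Pro^0$-a.s., hence in law.

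\emph{Step 2 (tightness and extraction).} For fixed $j,k\ge1$, each $\tilde Y^{j,0,k,\ell}$ is coordinatewise non-decreasing, $[0,1]^d$-valued, vanishes on $[0,\tilde\tau^{0,k,\ell}_j)$ and is constant on $(\lceil\tilde\tau^{0,k,\ell}_j\rceil,\infty)$ by Lemma~\ref{lem:properties_of_weighing_limit_terms}. Extending to $[-c,\infty)$ and checking the $M_1$ tightness criteria \eqref{it1:tightness_of_corrected_integrands_1}--\eqref{it3:tightness_of_corrected_integrands_1} coordinatewise: stochastic boundedness is immediate, the scalar $M_1$ oscillation $w'$ of a monotone function vanishes, and the local uniform controls at $-c$ and at a generic $T\notin\operatorname{Disc}_{\tilde\Pro^0}(\tilde H^0)$ hold because by Step~1 the ``active window'' $[\tilde\tau^{0,k,\ell}_j,\lceil\tilde\tau^{0,k,\ell}_j\rceil]$ collapses onto the single random time $\tilde\tau^{0,k}_j$ and $\tilde\Pro^0(\tilde\tau^{0,k}_j=T)\le\tilde\Pro^0(\Delta\tilde H^0_T\neq0)=0$; as in Proposition~\ref{prop:conv_remainder_integrals_2}, coordinatewise $M_1$ tightness yields tightness on the product space and hence, by monotonicity, on $(\D_{\R^d}[-c,\infty),\dM)$. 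Since there are only countably many pairs $(j,k)$, Prokhorov's theorem and a diagonal argument (as in Proposition~\ref{prop:conv_remainder_integrals_2}) produce a subsequence $\{\ell_r\}$ along which the whole tuple of~\eqref{eq:conv_weights_quant_remainder_integral_2} converges in law; the non-$\tilde Y$ components converge to the a.s.\ limits of Step~1, and the $\tilde Y$-components to processes we denote $\tilde Y^{j,0,k}$. By Skorokhod's representation theorem — after a harmless enlargement of $\tilde\Omega^0$ by an independent uniform variable, which changes no law — the limit may be realised on $(\tilde\Omega^0,\tilde{\mathcal{F}}^0,\tilde\Pro^0)$, and since the $\tilde Y^{j,0,k,\ell_r}$ are monotone the convergence holds on the stronger space by \cite[Thm.~12.7.3]{whitt}; restricting to $[0,\infty)$ gives $\tilde Y^{j,0,k,\ell_r}\Rightarrow\tilde Y^{j,0,k}$ on $(\D_{\R^d}[0,\infty),\dM)$ for all $j,k\ge1$.

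\emph{Step 3 (structure of the limit).} It remains to identify $\tilde Y^{j,0,k}$. Since $\tilde Y^{j,0,k,\ell_r}$ vanishes on $[0,\tilde\tau^{0,k,\ell_r}_j)$ and $(\tilde Y^{j,0,k,\ell_r},\tilde\tau^{0,k,\ell_r}_j)\Rightarrow(\tilde Y^{j,0,k},\tilde\tau^{0,k}_j)$ by Steps~1--2, the Portmanteau theorem applied to the open set $B$ of the proof of Lemma~\ref{lem:properties_of_weighing_limit_terms} gives $\tilde\Pro^0(\exists\,s\in[0,\tilde\tau^{0,k}_j):\tilde Y^{j,0,k}_s\neq0)=0$; applying it likewise to the open set $C$ with $(\tilde Y^{j,0,k,\ell_r},\lceil\tilde\tau^{0,k,\ell_r}_j\rceil)\Rightarrow(\tilde Y^{j,0,k},\tilde\tau^{0,k}_j)$ and the constancy of $\tilde Y^{j,0,k,\ell_r}$ on $(\lceil\tilde\tau^{0,k,\ell_r}_j\rceil,\infty)$ shows $\tilde Y^{j,0,k}$ is a.s.\ constant on $(\tilde\tau^{0,k}_j,\infty)$. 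By right-continuity, $\tilde Y^{j,0,k}=\tilde Y^{j,0,k}_{\tilde\tau^{0,k}_j}\ind_{[\tilde\tau^{0,k}_j,\infty)}$, and $\tilde Y^{j,0,k}$ is coordinatewise non-decreasing and $[0,1]^d$-valued because the set of such paths (started at $0$, bounded by $1$ in each coordinate) is closed in $(\D_{\R^d}[-c,\infty),\dM)$; in particular $\tilde Y^{j,0,k}$ is a single-jump process of the asserted form.

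\emph{Main obstacle.} The crux is Step~1: proving that the mesh-dependent detection times $\tilde\tau^{0,k,\ell}_j$ and their $\Theta_\ell$-roundings settle \emph{exactly} onto the genuine jump times $\tilde\tau^{0,k}_j$, and — most delicately — that $\lfloor\cdot\rfloor_{\Theta_\ell}$ and $\lceil\cdot\rceil_{\Theta_\ell}$ produce evaluations converging to the \emph{left limit}, respectively the \emph{value}, of $\tilde H^0$ and $\tilde X^0$ at those times, never to the wrong side. This is precisely what the a priori choices of $\{a_k\}$ and $\{\Theta_\ell\}$ and the appendix continuity lemmas are engineered to guarantee; once Step~1 is in place, the tightness of Step~2 and the Portmanteau arguments of Step~3 are routine.
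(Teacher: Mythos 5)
Your proposal is correct and follows essentially the same route as the paper's proof: almost-sure stabilisation of the detection times $\tilde{\tau}^{0,k,\ell}_j$ onto $\tilde{\tau}^{0,k}_j$ with the $\Theta_\ell$-roundings converging from the correct sides, tightness of the bounded monotone weights $\tilde{Y}^{j,0,k,\ell}$ plus Prokhorov and a diagonal extraction in $\ell$, realisation of the limit on $(\tilde\Omega^0,\tilde{\mathcal{F}}^0,\tilde\Pro^0)$ without loss of generality, and identification of the single-jump form via the Portmanteau theorem applied to the open sets $B$, $C$ from Lemma \ref{lem:properties_of_weighing_limit_terms} together with closedness of monotone $[0,1]^d$-valued paths. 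In fact you spell out Step 1 (the settling of the detection times and the role of the choices $a_k\notin V^{\Theta_\ell}\cup W$) in more detail than the paper, which simply asserts it.
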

\begin{proof}
    The relative compactness of the left-hand side of \eqref{eq:conv_weights_quant_remainder_integral_2} follows from the relative compactness of the individual component sequences, in the same way as described at the beginning of the proof of Proposition \ref{prop:conv_remainder_integrals_2}. For this, in particular one makes use of the fact that, according to Lemma \ref{lem:properties_of_weighing_limit_terms}, the $\tilde{Y}^{j,0,k,\ell}$ are bounded coordinatewise non-decreasing processes. Hence, there exists a subsequence $\{\ell_r\}$ of the natural numbers such that the left-hand side of \eqref{eq:conv_weights_quant_remainder_integral_2} converges weakly as $r \to \infty$. Denote the individual limits of the $\tilde{Y}^{j,0,k,\ell_r}$ by $\tilde{Y}^{j,0,k}$. By definition of the $\tilde{\tau}^{0,k,\ell}_j$ and the paths of $\tilde{H}^0$ being càdlàg, it becomes clear that almost surely $\tilde{\tau}^{0,k,\ell}_j=\tilde{\tau}^{0,k}_j$ for every $j,k\ge 1$ and large enough $\ell$. This directly entails $\lceil \tilde{\tau}^{0,k,\ell}_j \rceil \downarrow \tilde{\tau}^{0,k}_j$ and $\lfloor \tilde{\tau}^{0,k,\ell}_j \rfloor \uparrow \tilde{\tau}^{0,k}_j$ almost surely as $\ell \to \infty$. The processes $\tilde{H}^0$, $\tilde{X}^0$ being càdlàg, this also implies the almost sure convergence $(\tilde{H}^0_{\lfloor \tilde{\tau}^{0,k,\ell_r}_j\rfloor}, \tilde{H}^0_{\lceil \tilde{\tau}^{0,k,\ell_r}_j\rceil},\tilde{X}^0_{\lfloor \tilde{\tau}^{0,k,\ell_r}_j\rfloor},\tilde{X}^0_{\lceil \tilde{\tau}^{0,k,\ell_r}_j\rceil}) \to (\tilde{H}^0_{\tilde{\tau}^{0,k}_j-}, \tilde{H}^0_{\tilde{\tau}^{0,k}_j},\tilde{X}^0_{\tilde{\tau}^{0,k}_j-},\tilde{X}^0_{ \tilde{\tau}^{0,k}_j})$.
    Without loss of generality, we may assume the common limiting probability space to be $(\tilde \Omega^0, \tilde{\mathcal{F}}^0, \tilde\Pro^0)$ from Proposition \ref{prop:conv_remainder_integrals_2}, and denote all limiting quantities as on the right-hand side of \eqref{eq:conv_weights_quant_remainder_integral_2}. Indeed, otherwise take the limiting processes $\tilde{H}^0, \tilde{X}^0$ and redo Proposition \ref{prop:conv_remainder_integrals_2}. According to Lemma \ref{lem:properties_of_weighing_limit_terms}, the $\tilde{Y}^{j,0,k,\ell}$ are zero on $[0,\tilde{\tau}^{0,k,\ell}_j)$ and constant on $(\tilde{\tau}^{0,k,\ell}_j, \infty)$. Making use of the sets $B,C$ in the proof of Lemma \ref{lem:properties_of_weighing_limit_terms} being open, the convergence $(\tilde{Y}^{j,0,k,\ell_r}, \tilde{\tau}^{0,k,\ell_r}_j, \lceil \tilde{\tau}^{0,k,\ell_r}_j \rceil) \Rightarrow (\tilde{Y}^{j,0,k}, \tilde{\tau}^{0,k}_j , \tilde{\tau}^{0,k}_j)$ on $(\D_{\R^d}[-c,\infty), \dM)\times ([0,\infty), |\cdot|)^2$, we deduce the specific form of the single-jump process $\tilde{Y}^{j,0,k}$ as in the proof of Lemma \ref{lem:properties_of_weighing_limit_terms}.
\end{proof}

\begin{lem} \label{lem:conv_weighing_quant_remainder_integrals_3}
    To every subsequence of $\{(H^n,X^n)\}$ and every subsequence of the natural numbers $\{\ell_r\}$ such that there is convergence \eqref{eq:conv_relevant_quant_remainder_integrals_2} and \eqref{eq:conv_weights_quant_remainder_integral_2}, there exists a subsequence $\{k_p\}$ of the natural numbers and random variables $\tilde\xi^0_{\tilde\sigma^{0,i}_j} \in [0,1]^d$---which, without loss of generality, live on the common probability space $(\tilde \Omega^0, \tilde{\mathcal{F}}^0, \tilde\Pro^0)$ from Proposition \ref{prop:conv_remainder_integrals_2} and Lemma \ref{lem:conv_weighing_quant_remainder_integrals_2}---such that 
    \begin{align}
        \Big(\tilde{H}^0, \tilde{X}^0,\big(\tilde{Y}^{j,0,k_p}_{\tilde{\sigma}^{0,i}_j}\big)_{i,j\ge 1}\Big) \; \Rightarrow \; \Big(\tilde{H}^0, \tilde{X}^0,\big(\tilde\xi^0_{\tilde\sigma^{0,i}_j}\big)_{i,j\ge 1} \Big)\label{eq:conv_final_weights_quant_remainder_integrals_3}
    \end{align}
    as $p\to \infty$ on $(\D_{\R^d}[0,\infty),\dM)^2 \times (\R^d,|\cdot|)^\N$.
\end{lem}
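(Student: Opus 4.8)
The plan is to establish \eqref{eq:conv_final_weights_quant_remainder_integrals_3} by a short tightness‑and‑Prokhorov argument, exploiting that the only genuinely new objects — the random variables $\tilde{Y}^{j,0,k}_{\tilde{\sigma}^{0,i}_j}$ — are uniformly bounded. First I would record that, by Lemma \ref{lem:properties_of_weighing_limit_terms} and its propagation through the $\ell$--limit carried out in Lemma \ref{lem:conv_weighing_quant_remainder_integrals_2}, each limiting process $\tilde{Y}^{j,0,k}$ takes values in the cube $[0,1]^d$. Consequently, evaluating the càdlàg process $\tilde{Y}^{j,0,k}$ at the stopping time $\tilde{\sigma}^{0,i}_j$ yields, for every $i,j,k\ge 1$, a genuine $\R^d$--valued random variable $\tilde{Y}^{j,0,k}_{\tilde{\sigma}^{0,i}_j}$ with values in $[0,1]^d$. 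Hence, for each fixed $k$, the countable family $\big(\tilde{Y}^{j,0,k}_{\tilde{\sigma}^{0,i}_j}\big)_{i,j\ge 1}$ is a random element of the \emph{compact} metrisable space $([0,1]^d)^{\N}$, whose topology coincides with the one induced by $(\R^d,|\cdot|)^{\N}$.

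Next I would assemble the requisite joint tightness. The sequence $\big\{\big(\tilde{Y}^{j,0,k}_{\tilde{\sigma}^{0,i}_j}\big)_{i,j\ge 1}\big\}_{k\ge 1}$ takes all its values in the fixed compact set $([0,1]^d)^{\N}$ and is therefore trivially tight on $(\R^d,|\cdot|)^{\N}$. On the other hand, the pair $(\tilde{H}^0,\tilde{X}^0)$ carries a single, $k$--independent law on the Polish space $(\D_{\R^d}[0,\infty),\dM)^2$, and every Borel probability measure on a Polish space is tight. Since a finite product of tight families is tight — one simply takes the product of the compacta furnished by the individual marginals — the sequence
\begin{align*}
    \Big\{\Big(\tilde{H}^0,\,\tilde{X}^0,\,\big(\tilde{Y}^{j,0,k}_{\tilde{\sigma}^{0,i}_j}\big)_{i,j\ge 1}\Big)\Big\}_{k\ge 1}
\end{align*}
is tight on $(\D_{\R^d}[0,\infty),\dM)^2\times(\R^d,|\cdot|)^{\N}$.

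I would then apply Prokhorov's theorem to extract a subsequence $\{k_p\}$ along which this sequence converges weakly, say to a limit $\big(\bar{H}^0,\bar{X}^0,(\tilde{\xi}^0_{\tilde{\sigma}^{0,i}_j})_{i,j\ge 1}\big)$. Because the projection onto the first two coordinates is continuous and those coordinates do not depend on $k$, the continuous mapping theorem \cite[Thm.~2.7]{billingsley} gives $\mathcal{L}(\bar{H}^0,\bar{X}^0)=\mathcal{L}(\tilde{H}^0,\tilde{X}^0)$; a standard transfer of the limit onto $(\tilde{\Omega}^0,\tilde{\mathcal{F}}^0,\tilde{\Pro}^0)$ — enlarging it if necessary, e.g.\ via regular conditional distributions or Skorokhod's representation theorem — then lets us assume the first two components of the limit are literally $\tilde{H}^0$ and $\tilde{X}^0$, which is precisely the claimed convergence \eqref{eq:conv_final_weights_quant_remainder_integrals_3}. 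Finally, since each $\tilde{Y}^{j,0,k_p}_{\tilde{\sigma}^{0,i}_j}$ lies in the \emph{closed} set $[0,1]^d$, the Portmanteau theorem forces $\tilde{\xi}^0_{\tilde{\sigma}^{0,i}_j}\in[0,1]^d$ $\tilde{\Pro}^0$--almost surely, for all $i,j\ge 1$.

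I do not expect any serious obstacle: the statement is in essence ``bounded random variables form a tight family, so pass to a weakly convergent subsequence''. The only points requiring a modicum of care are the bookkeeping involved in re‑anchoring the weakly convergent subsequence on the fixed probability space $(\tilde{\Omega}^0,\tilde{\mathcal{F}}^0,\tilde{\Pro}^0)$ while keeping the first two coordinates equal to $\tilde{H}^0$ and $\tilde{X}^0$ — the usual enlargement manoeuvre, already flagged by the ``without loss of generality'' in the statement — and the routine verification that the product topology on $([0,1]^d)^{\N}$ is indeed the one inherited from $(\R^d,|\cdot|)^{\N}$, so that the asserted mode of convergence is the intended one.
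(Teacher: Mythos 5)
Your proposal is correct and follows essentially the same route as the paper: the paper's proof is a one-line appeal to Prokhorov's theorem for the sequence $\{(\tilde{H}^0, \tilde{X}^0,(\tilde{Y}^{j,0,k}_{\tilde{\sigma}^{0,i}_j})_{i,j\ge 1})\}_{k\ge 1}$, together with the ``without loss of generality'' re-anchoring onto $(\tilde\Omega^0,\tilde{\mathcal{F}}^0,\tilde\Pro^0)$. You have merely spelled out the tightness that Prokhorov requires (the $\tilde{Y}$-evaluations live in the compact cube $[0,1]^d$ by Lemmas \ref{lem:properties_of_weighing_limit_terms}--\ref{lem:conv_weighing_quant_remainder_integrals_2}, and $(\tilde H^0,\tilde X^0)$ is a single Borel law on a Polish space) and made explicit the Portmanteau step showing $\tilde\xi^0\in[0,1]^d$, both of which the paper leaves implicit. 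The one cosmetic difference is how the limit is placed on the fixed probability space: you invoke an enlargement/Skorokhod-representation device, while the paper simply observes that one can ``redo'' Proposition \ref{prop:conv_remainder_integrals_2} and Lemma \ref{lem:conv_weighing_quant_remainder_integrals_2} starting from the new $(\tilde H^0,\tilde X^0)$; both are standard and equivalent in effect.
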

\begin{proof}
    A simple application of Prokhorov's theorem to $\{(\tilde{H}^0, \tilde{X}^0,(\tilde{Y}^{j,0,k_p}_{\tilde{\sigma}^{0,i}_j})_{i,j\ge 1})\}_{p\ge 1}$ yields the claim. Without loss of generality, we may assume the common limiting probability space to be $(\tilde \Omega^0, \tilde{\mathcal{F}}^0, \tilde\Pro^0)$ from Proposition \ref{prop:conv_remainder_integrals_2} and Lemma \ref{lem:conv_weighing_quant_remainder_integrals_2}. Indeed, otherwise take the limiting processes $\tilde{H}^0, \tilde{X}^0$ and redo Proposition \ref{prop:conv_remainder_integrals_2}.
\end{proof}

In the sequel, whenever we write "$\Rightarrow_{n,\,r,\,p\, \to \,\infty}$" it is to describe the consecutive weak limits starting with the weak limit in $n$ for each $r,p$, then the weak limit in $r$ for each $p$, and finally the weak limit in $p$.
\begin{prop} \label{prop:pre_final_convergence_remainder_integrals}
    There exists a subsequence of $\{(H^n,X^n)\}$ (which we will denote as the original sequence), subsequences of the natural numbers $\{\ell_r\}$, $\{k_p\}$, and a probability space $(\tilde{\Omega}^0, \tilde{\mathcal{F}}^0, \tilde{\Pro}^0)$ such that  
    \begin{align*}
        \Big( H^n, \, X^n, \,\sum_{j=1}^\infty \, (H^n_{\lceil \tau^{n,k_p,\ell_r}_j\rceil}- &H^n_{\lfloor \tau^{n,k_p,\ell_r}_j\rfloor}) (X^n_{\lceil \tau^{n,k_p,\ell_r}_j\rceil}- X^n_{\lfloor \tau^{n,k_p,\ell_r}_j\rfloor}) \int_{\bullet \wedge \tau^{n,k_p,\ell_r}_j }^{ \bullet \wedge \lceil \tau^{n,k_p,\ell_r}_j\rceil} \zeta^{j,n,k_p,\ell_r}_{s-} \, \diff \xi^{j,n,k_p,\ell_r}_s \Big) \notag \\[1ex]
        &\underset{n,\,r,\,p\, \to \,\infty}{\Rightarrow} \; \; \Big( \tilde{H}^0, \, \tilde{X}^0, \sum_{ \sigma \in \mathcal{T}(\tilde{H}^0)} \, \tilde{\xi}^0_{\sigma} \Delta\tilde{H}^0_{\sigma} \Delta \tilde{X}^0_{\sigma}  \ind_{\{\sigma  \, \le \, \bullet\}}  \Big) \label{eq:first_conv_final_remainder_integrals}
        \end{align*}
        on $(\D_{\R^d}[0,\infty), \dM)^3$, where the limiting quantities all live on the common probability space $(\tilde{\Omega}^0, \tilde{\mathcal{F}}^0, \tilde{\Pro}^0)$ and $\tilde{\xi}^0_{\sigma} \in [0,1]^d$, $\sigma \in \mathcal{T}(\tilde{H}^0)$.
\end{prop}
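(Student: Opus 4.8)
The plan is to perform the three weak limits in the order dictated by the symbol ``$\Rightarrow_{n,r,p\to\infty}$'', identifying at each stage the limit of the first summand of \eqref{decomp:remainder_integral} as a correspondingly truncated and reweighted version of the target correction sum. First I would fix the subsequence of $\{(H^n,X^n)\}$ from Proposition \ref{prop:conv_remainder_integrals_2} (relabelled as the original sequence), then the subsequence $\{\ell_r\}$ from Lemma \ref{lem:conv_weighing_quant_remainder_integrals_2}, and finally $\{k_p\}$ from Lemma \ref{lem:conv_weighing_quant_remainder_integrals_3}; Skorokhod's representation theorem would let me replace each of these weak convergences by an almost sure one, all limiting objects living on $(\tilde{\Omega}^0,\tilde{\mathcal{F}}^0,\tilde{\Pro}^0)$.

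\emph{Limit in $n$ for fixed $k,\ell$.} I would write the first summand of \eqref{decomp:remainder_integral} as $\sum_{j\ge 1}a^{j,n}b^{j,n}Y^{j,n,k,\ell}_\bullet$ with $a^{j,n}:=H^n_{\lceil\tau^{n,k,\ell}_j\rceil}-H^n_{\lfloor\tau^{n,k,\ell}_j\rfloor}$, $b^{j,n}:=X^n_{\lceil\tau^{n,k,\ell}_j\rceil}-X^n_{\lfloor\tau^{n,k,\ell}_j\rfloor}$ (products coordinatewise), the $Y^{j,n,k,\ell}$ being those of \eqref{eq:defi_scaling_terms_Y_n}. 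On $A_{n,k}$ at most $R$ indices $j$ are active on $[0,T]$, so on compacts the sum is locally finite, and tightness of the number of $a_k$-sized increments of $H^n$ makes the truncation to finitely many $j$ uniform as $n\to\infty$. Each $Y^{j,n,k,\ell}$ is $[0,1]^d$-valued, coordinatewise non-decreasing, null on $[0,\tau^{n,k,\ell}_j)$ and constant on $(\lceil\tau^{n,k,\ell}_j\rceil,\infty)$, so each summand is coordinatewise monotone on the window $[\lfloor\tau^{n,k,\ell}_j\rfloor,\lceil\tau^{n,k,\ell}_j\rceil]$ and constant off it, and these windows are pairwise non-overlapping apart from possibly sharing a single point of $\Theta_\ell$ at which the neighbouring pieces do not interact. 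Hence the ``assembly'' map $(c^j,Z^j)_j\mapsto\sum_j c^j Z^j$ is $M_1$-continuous at the limiting configuration, and combining Proposition \ref{prop:conv_remainder_integrals_2} with the continuous mapping theorem \cite[Thm.~2.7]{billingsley} yields, jointly with $(H^n,X^n)$, the convergence of the first summand to $\tilde Z^{0,k,\ell}:=\sum_{j\ge 1}(\tilde H^0_{\lceil\tilde\tau^{0,k,\ell}_j\rceil}-\tilde H^0_{\lfloor\tilde\tau^{0,k,\ell}_j\rfloor})(\tilde X^0_{\lceil\tilde\tau^{0,k,\ell}_j\rceil}-\tilde X^0_{\lfloor\tilde\tau^{0,k,\ell}_j\rfloor})\tilde Y^{j,0,k,\ell}_\bullet$.

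\emph{Limit in $r$ for fixed $k$, then in $p$.} On $(\tilde{\Omega}^0,\tilde{\mathcal{F}}^0,\tilde{\Pro}^0)$, with $\tilde H^0,\tilde X^0$ now fixed, Lemma \ref{lem:conv_weighing_quant_remainder_integrals_2} gives along $\ell_r\to\infty$ that $\lfloor\tilde\tau^{0,k,\ell_r}_j\rfloor,\lceil\tilde\tau^{0,k,\ell_r}_j\rceil\to\tilde\tau^{0,k}_j$, hence the increments tend to $\Delta\tilde H^0_{\tilde\tau^{0,k}_j}$ and $\Delta\tilde X^0_{\tilde\tau^{0,k}_j}$, while $\tilde Y^{j,0,k,\ell_r}\Rightarrow\tilde Y^{j,0,k}=\tilde Y^{j,0,k}_{\tilde\tau^{0,k}_j}\ind_{[\tilde\tau^{0,k}_j,\infty)}$; as the windows now collapse onto the pairwise distinct times $\tilde\tau^{0,k}_j$, finitely many per compact, the same assembly argument gives $\tilde Z^{0,k,\ell_r}\Rightarrow\tilde Z^{0,k}:=\sum_{j\ge 1}\Delta\tilde H^0_{\tilde\tau^{0,k}_j}\Delta\tilde X^0_{\tilde\tau^{0,k}_j}\tilde Y^{j,0,k}_{\tilde\tau^{0,k}_j}\ind_{[\tilde\tau^{0,k}_j,\infty)}$, jointly with $(\tilde H^0,\tilde X^0)$. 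For the last limit I would regroup the jumps of $\tilde H^0$ of size exceeding $a_{k_p}$ into the bands $(a_i,a_{i-1}]$, $i\le k_p$, i.e. according to the stopping times of $\mathcal{T}(\tilde H^0)$ of level at most $k_p$, so that $\tilde Z^{0,k_p}=\sum_{i\le k_p}\sum_{j\ge 1}\Delta\tilde H^0_{\tilde\sigma^{0,i}_j}\Delta\tilde X^0_{\tilde\sigma^{0,i}_j}\,\tilde Y^{\,\cdot,0,k_p}_{\tilde\sigma^{0,i}_j}\,\ind_{[\tilde\sigma^{0,i}_j,\infty)}$; Lemma \ref{lem:conv_weighing_quant_remainder_integrals_3} replaces the weights $\tilde Y^{\,\cdot,0,k_p}_{\tilde\sigma^{0,i}_j}$ by their weak limits $\tilde\xi^0_{\tilde\sigma^{0,i}_j}\in[0,1]^d$, and Condition \ref{ass:R2} (applied to the null sequence $\{a_k\}$) shows that the bands with $i>k_p$ contribute a process uniformly small on compacts as $p\to\infty$. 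Consequently $\sum_{\sigma\in\mathcal{T}(\tilde H^0)}\tilde\xi^0_\sigma\Delta\tilde H^0_\sigma\Delta\tilde X^0_\sigma\ind_{[\sigma,\infty)}$ is a well-defined càdlàg process and $\tilde Z^{0,k_p}\Rightarrow$ it, jointly with $(\tilde H^0,\tilde X^0)$; chaining the three displays yields the claim on $(\D_{\R^d}[0,\infty),\dM)^3$.

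\emph{Main obstacle.} The delicate point throughout is the $M_1$-continuity of the assembly step: addition on $(\D_{\R^d}[0,\infty),\dM)$ is not jointly continuous, so one must genuinely exploit the separated-support and coordinatewise-monotone structure of the bump processes — and in the limit $r\to\infty$ verify that collapsing the averaging windows $[\lfloor\tilde\tau^{0,k,\ell}_j\rfloor,\lceil\tilde\tau^{0,k,\ell}_j\rceil]$ onto single jump times of $\tilde H^0$ introduces no spurious oscillation in the limit path. The other genuinely substantive estimate is the uniform tail bound in the final limit, where Condition \ref{ass:R2} is exactly what guarantees both existence of the limiting correction process and negligibility of the discarded small-jump bands; it is only here, and not in the first two limits, that a hypothesis beyond \eqref{eq:Mn_An_condition} and joint $M_1$-convergence is needed.
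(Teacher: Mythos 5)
Your proposal follows essentially the same route as the paper's proof: you take the three iterated limits in the same order, set up the subsequences via Proposition~\ref{prop:conv_remainder_integrals_2}, Lemma~\ref{lem:conv_weighing_quant_remainder_integrals_2}, and Lemma~\ref{lem:conv_weighing_quant_remainder_integrals_3}, truncate the $j$-sum using tightness of the number of $a_k$-increments, exploit the pairwise-disjoint discontinuity sets of the $\tilde Y^{j,0,k,\ell}$ to apply the generalized continuous mapping theorem for addition in $M_1$, and invoke Condition~\ref{ass:R2} for the tail control in the final limit. Two remarks on the places where your sketch is slightly looser than the paper: the paper does not pass to Skorokhod representations --- it keeps everything as weak convergence and relies on the limiting quantities already being constructed on the common space $(\tilde\Omega^0,\tilde{\mathcal F}^0,\tilde\Pro^0)$ via Prokhorov plus Kolmogorov extension in Proposition~\ref{prop:conv_remainder_integrals_2}; invoking Skorokhod representation at each of the three stages would work but needs care to keep the joint laws of all the (infinitely many) auxiliary quantities consistent across the stages, which the paper's construction handles cleanly in one go. And the phrasing ``bands with $i>k_p$ contribute a process uniformly small'' is a little imprecise: those bands are absent from $\tilde Z^{0,k_p}$ altogether; what Condition~\ref{ass:R2} together with $\lvert\tilde Y^{j,0,k}\rvert\le d$ actually buys you is a uniform-in-$p$ bound on the contribution of bands $i>K$ for a fixed cutoff $K$, so that one can pass to the $p$-limit band by band (with a further $j$-truncation as in the first limit) and then let $K\to\infty$.
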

\begin{proof}
    Let $\{(H^n,X^n)\}$ be a further subsequence chosen according to Proposition \ref{prop:conv_remainder_integrals_2} such that the convergence \eqref{eq:conv_relevant_quant_remainder_integrals_2} holds. Then, choose a subsequence $\{\ell_r\}$ such that there is \eqref{eq:conv_weights_quant_remainder_integral_2} and finally a subsequence $\{k_p\}$ admitting \eqref{eq:conv_final_weights_quant_remainder_integrals_3}.
    
    Since weak convergence on $(\D_{\R^d}[0,\infty),\dM)$ is equivalent to weak convergence on $(\D_{\R^d}[0,T],\dM)$ for all $T>0$ in an unbounded subset of $[0,\infty)$, it suffices to show the weak convergence on $(\D_{\R^d}[0,T],\dM)^3$ for all $T \in \Theta_{\ell_r}$. Fix $T \in \Theta_{\ell_r}$. Then, for any $p,r \ge 1$ and on the compact intervals $[0,T]$, the number of jumps of the $H^n$ larger than $a_{k_p}$ are tight in $n$, i.e., $\operatorname{sup}_{n\ge 0} \Pro^n( \tau^{n,k_p,\ell_r}_J \le T) \to 0$ as $J\to \infty$. Therefore, it is sufficient to show 
     \begin{align}
        \Big( H^n, &\, X^n, \,\sum_{j=1}^J \, (H^n_{\lceil \tau^{n,k_p,\ell_r}_j\rceil}- H^n_{\lfloor \tau^{n,k,\ell_r}_j\rfloor}) (X^n_{\lceil \tau^{n,k_p,\ell_r}_j\rceil}- X^n_{\lfloor \tau^{n,k_p,\ell_r}_j\rfloor}) \int_{\bullet \wedge \tau^{n,k_p,\ell_r}_j }^{ \bullet \wedge \lceil \tau^{n,k_p,\ell_r}_j\rceil} \zeta^{j,n,k_p,\ell_r}_{s-} \, \diff \xi^{j,n,k_p,\ell_r}_s \Big) \notag \\
        &\underset{n\to \infty}{\Rightarrow} \; \; \Big( \tilde{H}^0, \, \tilde{X}^0, \,\sum_{j=1}^J \, (\tilde{H}^0_{\lceil \tilde{\tau}^{0,k_p,\ell_r}_j\rceil}- \tilde{H}^0_{\lfloor \tilde{\tau}^{0,k_p,\ell_r}_j\rfloor}) (\tilde{X}^0_{\lceil \tilde{\tau}^{0,k_p,\ell_r}_j\rceil}- \tilde{X}^0_{\lfloor \tilde{\tau}^{0,k_p,\ell_r}_j\rfloor}) \tilde{Y}^{j,0,k_p,\ell_r} \Big)
        \end{align} \label{eq:first_conv_cuttoff_J_final_remainder_integrals}
        in order to obtain 
        \begin{align}
        \Big( H^n, &\, X^n, \,\sum_{j=1}^\infty \, (H^n_{\lceil \tau^{n,k_p,\ell_r}_j\rceil}- H^n_{\lfloor \tau^{n,k_p,\ell_r}_j\rfloor}) (X^n_{\lceil \tau^{n,k_p,\ell_r}_j\rceil}- X^n_{\lfloor \tau^{n,k_p,\ell_r}_j\rfloor}) \int_{\bullet \wedge \tau^{n,k_p,\ell_r}_j }^{ \bullet \wedge \lceil \tau^{n,k_p,\ell_r}_j\rceil} \zeta^{j,n,k_p,\ell_r}_{s-} \, \diff \xi^{j,n,k_p,\ell_r}_s \Big) \notag \\
        &\underset{n\to \infty}{\Rightarrow} \; \; \Big( \tilde{H}^0, \, \tilde{X}^0, \,\sum_{j=1}^\infty \, (\tilde{H}^0_{\lceil \tilde{\tau}^{0,k_p,\ell_r}_j\rceil}- \tilde{H}^0_{\lfloor \tilde{\tau}^{0,k_p,\ell_r}_j\rfloor}) (\tilde{X}^0_{\lceil \tilde{\tau}^{0,k_p,\ell_r}_j\rceil}- \tilde{X}^0_{\lfloor \tilde{\tau}^{0,k_p,\ell_r}_j\rfloor}) \tilde{Y}^{j,0,k_p,\ell_r} \Big) \label{eq:first_conv_final_remainder_integrals}
        \end{align}
        on $(\D_{\R^d}[0,T],\dM)^3$. Note that addition is a continuous operation on the $M_1$ Skorokhod space whenever the limiting objects have no common discontinuity. On account of Lemma \ref{lem:properties_of_weighing_limit_terms}, $\operatorname{Disc}(\tilde{Y}^{i,0,k_p,\ell_r}) \cap \operatorname{Disc}(\tilde{Y}^{j,0,k_p,\ell_r})=\emptyset$ almost surely whenever $i\neq j$. Hence, \eqref{eq:first_conv_cuttoff_J_final_remainder_integrals} follows from generalised continuous mapping \cite[Thm.~2.7]{billingsley} and \eqref{eq:conv_relevant_quant_remainder_integrals_2}, together with the fact that $T$ is an almost sure continuity point of all limiting quantities.
        
        Next, a similar argument as in the first part of the proof together with Lemma \ref{lem:conv_weighing_quant_remainder_integrals_2} (note in particular that the $\tilde{Y}^{j,0,k_p}=\tilde{Y}^{j,0,k_p}_{\tilde{\tau}^{0,k_p}_j} \ind_{[\tilde{\tau}^{0,k_p}_j,\, \infty)} $, $j\ge 1$, do not have common discontinuities) yields 
        \begin{align*}
        \Big( \tilde H^0, \tilde X^0, \sum_{j=1}^\infty \, (\tilde{H}^0_{\lceil \tilde{\tau}^{0,k_p,\ell_r}_j\rceil}- \tilde{H}^0_{\lfloor \tilde{\tau}^{0,k_p,\ell_r}_j\rfloor}) (\tilde{X}^0_{\lceil \tilde{\tau}^{0,k_p,\ell_r}_j\rceil}- \tilde{X}^0_{\lfloor \tilde{\tau}^{0,k_p,\ell_r}_j\rfloor}) \tilde{Y}^{j,0,k_p,\ell_r} \Big) \\
        \Rightarrow \quad \Big( \tilde H^0, \tilde X^0, \sum_{ j=1 }^\infty \, \tilde{Y}^{j,0,k_p}_{\tilde{\tau}^{0,k_p}_j} \Delta\tilde{H}^0_{\tilde{\tau}^{0,k_p}_j} \Delta \tilde{X}^0_{\tilde{\tau}^{0,k_p}_j}  \ind_{[\tilde{\tau}^{0,k_p}_j,\, \infty)}   \Big)
        \end{align*}
        on $(\D_{\R^d}[0,T],\dM)^3$ as $r\to \infty$.

        Finally, recall the definition of the stopping times $\tilde{\sigma}^{0,i}_j$ given in \eqref{eq:def_limiting_stopping_times_intervals_tightness_without_AVCI} and let us rewrite
        $$ \sum_{ j=1 }^\infty \, \tilde{Y}^{j,0,k_p}_{\tilde{\tau}^{0,k_p}_j} \Delta\tilde{H}^0_{\tilde{\tau}^{0,k_p}_j} \Delta \tilde{X}^0_{\tilde{\tau}^{0,k_p}_j}  \ind_{[\tilde{\tau}^{0,k_p}_j ,\, \infty)} \; = \; \sum_{i=1}^{k_p} \sum_{ j=1 }^\infty \, \tilde{Y}^{j,0,k_p}_{\tilde{\sigma}^{0,i}_j} \Delta\tilde{H}^0_{\tilde{\sigma}^{0,i}_j} \Delta \tilde{X}^0_{\tilde{\sigma}^{0,i}_j}  \ind_{[\tilde{\sigma}^{0,i}_j ,\, \infty)} \; =: \; \tilde{Z}^{0,k_p}.$$
        Since $(H^0,X^0)$ and $(\tilde{H}^0,\tilde{X}^0)$ coincide in law, Condition \eqref{ass:R2} and $|\tilde{Y}^{j,0,k}|\le d$ ensure that, uniformly on compacts, $\{\tilde{Z}^{0,k_p}\}$ is a Cauchy sequence with respect to convergence in probability. Thus, it suffices to investigate the limiting behaviour of 
        $$ \sum_{i=1}^{K} \sum_{ j=1 }^\infty \, \tilde{Y}^{j,0,k_p}_{\tilde{\sigma}^{0,i}_j} \Delta\tilde{H}^0_{\tilde{\sigma}^{0,i}_j} \Delta \tilde{X}^0_{\tilde{\sigma}^{0,i}_j}  \ind_{[\tilde{\sigma}^{0,i}_j ,\, \infty)} $$
        as $p\to \infty$, for each $K\ge 1$. And by a similar argument as in the first part of the proof, it even is enough to consider
        $$ \sum_{i=1}^{K} \sum_{ j=1 }^J \, \tilde{Y}^{j,0,k_p}_{\tilde{\sigma}^{0,i}_j} \Delta\tilde{H}^0_{\tilde{\sigma}^{0,i}_j} \Delta \tilde{X}^0_{\tilde{\sigma}^{0,i}_j} \ind_{[\tilde{\sigma}^{0,i}_j ,\, \infty)} $$
        as $p\to \infty$, for each $K,J\ge 1$. Applying the continuous mapping theorem and Lemma \ref{lem:conv_weighing_quant_remainder_integrals_3}, we deduce
        $$ \Big( \tilde H^0, \tilde X^0,\tilde{Z}^{0,k_p}\big) \; \Rightarrow \; \Big( \tilde H^0, \tilde X^0,\sum_{i=1}^{\infty} \sum_{ j=1 }^\infty \, \tilde\xi^0_{\tilde{\sigma}^{0,i}_j} \Delta\tilde{H}^0_{\tilde{\sigma}^{0,i}_j} \Delta \tilde{X}^0_{\tilde{\sigma}^{0,i}_j}  \ind_{[\tilde{\sigma}^{0,i}_j ,\, \infty)} =  \sum_{\sigma \in \mathcal{T}(\tilde{H}^0)} \, \tilde\xi^0_{\sigma} \Delta\tilde{H}^0_{\sigma} \Delta \tilde{X}^0_{\sigma}  \ind_{[\sigma, \, \infty)}\Big)$$
        as $p\to \infty$.
\end{proof}

\begin{prop} \label{prop:final_weak_convergence_remainder_integrals}
There exists a subsequence of $\{(H^n,X^n)\}$ (which we will denote as the original sequence), subsequences of the natural numbers $\{\ell_r\}$, $\{k_p\}$, and a probability space $(\tilde{\Omega}^0, \tilde{\mathcal{F}}^0, \tilde{\Pro}^0)$ such that  
    \begin{align*}
        \Big( H^n, \, X^n, \,\int_0^\bullet \tilde{H}^{n,k_p,\ell_r}_{s-} \; \diff X^n_s \Big) \; \; \underset{n,\,r,\,p\, \to \,\infty}{\Rightarrow} \; \; \Big( \tilde{H}^0, \, \tilde{X}^0, \sum_{ \sigma \in \mathcal{T}(\tilde{H}^0)} \, \tilde{\xi}^0_{\sigma} \Delta\tilde{H}^0_{\sigma} \Delta \tilde{X}^0_{\sigma}  \ind_{[\sigma, \, \infty)}  \Big) 
        \end{align*}
        on $(\D_{\R^d}[0,\infty), \dM)^3$, where the limiting quantities all live on the common probability space $(\tilde{\Omega}^0, \tilde{\mathcal{F}}^0, \tilde{\Pro}^0)$ and $\tilde{\xi}^0_{\sigma} \in [0,1]^d$, $\sigma \in \mathcal{T}(\tilde{H}^0)$.
\end{prop}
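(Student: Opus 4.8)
The plan is to derive the statement by a converging‑together (Slutsky‑type) argument anchored on the exact decomposition \eqref{decomp:remainder_integral} of the remainder integrals on the high‑probability sets $A_{n,k}$. Write $I^{n,k,\ell}:=\int_0^\bullet\tilde H^{n,k,\ell}_{s-}\diff X^n_s$ for the remainder integral. On $A_{n,k}$, summing \eqref{decomp:remainder_integral} over $j$ gives $I^{n,k,\ell}=S^{n,k,\ell}+R^{n,k,\ell}$, where $S^{n,k,\ell}$ collects, over $j\ge1$, the first summands $(H^n_{\lceil\tau^{n,k,\ell}_j\rceil}-H^n_{\lfloor\tau^{n,k,\ell}_j\rfloor})(X^n_{\lceil\tau^{n,k,\ell}_j\rceil}-X^n_{\lfloor\tau^{n,k,\ell}_j\rfloor})\int_{\bullet\wedge\tau^{n,k,\ell}_j}^{\bullet\wedge\rho^{n,k,\ell}_j}\zeta^{j,n,k,\ell}_{s-}\diff\xi^{j,n,k,\ell}_s$, and $R^{n,k,\ell}$ collects the four remaining families. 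Proposition~\ref{prop:pre_final_convergence_remainder_integrals} already identifies the joint limit of $(H^n,X^n,S^{n,k,\ell})$ along suitable subsequences, so it only remains to check that replacing $S^{n,k,\ell}$ by $I^{n,k,\ell}$ does not change the limit, and this is where Lemma~\ref{lem:junk_vanishes_conveniently} enters.

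Concretely, I would first reduce --- as in the proof of Proposition~\ref{prop:pre_final_convergence_remainder_integrals} --- to weak convergence on $(\D_{\R^d}[0,T],\dM)^3$ for $T$ in a suitable unbounded set of times; fixing such a $T$ and a bounded Lipschitz functional $F$ on that space, and using that the uniform metric dominates the $M_1$ metric on $\D_{\R^d}[0,T]$, one gets for every $\varepsilon>0$
\[
\Bigl|\,\E^n\bigl[F(H^n,X^n,I^{n,k,\ell})\bigr]-\E^n\bigl[F(H^n,X^n,S^{n,k,\ell})\bigr]\,\Bigr| \;\le\; 2\norm{F}_\infty\,\eta \,+\, \norm{F}_{\operatorname{Lip}}\,\varepsilon \,+\, 2\norm{F}_\infty\,\Pro^n\bigl(|R^{n,k,\ell}|^*_T>\varepsilon,\,A_{n,k}\bigr),
\]
since $\Pro^n(A_{n,k}^\complement)\le\eta$ by construction. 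Taking $\{n_m\},\{\ell_r\},\{k_p\}$ as in Proposition~\ref{prop:pre_final_convergence_remainder_integrals}, its conclusion gives that $(H^n,X^n,S^{n,k_p,\ell_r})$ converges, in the iterated sense ($n\to\infty$, then $r\to\infty$, then $p\to\infty$), to $(\tilde H^0,\tilde X^0,\sum_{\sigma\in\mathcal{T}(\tilde H^0)}\tilde\xi^0_\sigma\Delta\tilde H^0_\sigma\Delta\tilde X^0_\sigma\ind_{[\sigma,\infty)})$; and since a $\limsup$ along the subsequences $\{\ell_r\}$, $\{k_p\}$ is dominated by the full $\limsup_\ell$, $\limsup_k$, Lemma~\ref{lem:junk_vanishes_conveniently} gives $\limsup_{p\to\infty}\limsup_{r\to\infty}\limsup_{n\to\infty}\Pro^n(|R^{n,k_p,\ell_r}|^*_T>\varepsilon,A_{n,k_p})\to0$ as $\gamma\downarrow0$. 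Letting $\eta\downarrow0$ and $\varepsilon\downarrow0$ then forces the iterated $\limsup$ of the displayed difference to vanish, so $(H^n,X^n,I^{n,k_p,\ell_r})$ inherits the limit of $(H^n,X^n,S^{n,k_p,\ell_r})$.

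The step I expect to require genuine care is the bookkeeping of the auxiliary parameter $\gamma$: it is hardwired into \eqref{decomp:remainder_integral} --- hence into $S^{n,k,\ell}$, into the sets $A_{n,k}$, and into the subsequences and limiting weights produced by Proposition~\ref{prop:pre_final_convergence_remainder_integrals} --- whereas the junk $R^{n,k,\ell}$ is only controlled in the limit $\gamma\downarrow0$, while $I^{n,k,\ell}$ itself does not depend on $\gamma$ at all. Reconciling this calls for a diagonal extraction along a sequence $\gamma_m\downarrow0$ (paired with $\eta_m\downarrow0$): for each $m$, run Section~\ref{sec:proof_dealing_with_the_remainder_integrals} with $\gamma=\gamma_m$ along a subsequence nested inside the one from step $m-1$, obtaining a limit $\sum_{\sigma\in\mathcal{T}(\tilde H^0)}\tilde\xi^{0,m}_\sigma\Delta\tilde H^0_\sigma\Delta\tilde X^0_\sigma\ind_{[\sigma,\infty)}$; then, since $|\tilde\xi^{0,m}_\sigma|\le d$ and Condition~\ref{ass:R2} controls the tail of $\sum_{s\le\bullet}|\Delta\tilde H^0_s\Delta\tilde X^0_s|$, show that these limits form a Cauchy sequence in $m$ uniformly on compacts, and finally pass to a diagonal sequence of indices on which all the $\gamma_m$‑estimates hold at once. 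Because $I^{n,k,\ell}$ carries no $\gamma$, this diagonal sequence then delivers precisely the asserted convergence, with weights $\tilde\xi^0_\sigma:=\lim_m\tilde\xi^{0,m}_\sigma\in[0,1]^d$. All remaining ingredients --- the probability estimates and the metric inequality --- are routine given Lemma~\ref{lem:junk_vanishes_conveniently} and Proposition~\ref{prop:pre_final_convergence_remainder_integrals}; the difficulty lies entirely in orchestrating the iterated and diagonal limits.
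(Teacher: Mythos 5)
Your proposal follows the paper's own route: the paper proves this proposition exactly by combining the decomposition \eqref{decomp:remainder_integral} with Lemma \ref{lem:junk_vanishes_conveniently} and Proposition \ref{prop:pre_final_convergence_remainder_integrals}, invoking that $\gamma,\eta>0$ were arbitrary, and your bounded-Lipschitz converging-together estimate (with the uniform bound on the $M_1$ distance, the $\eta$-bound on $\Pro^n(A_{n,k}^\complement)$, and the domination of iterated $\limsup$'s along $\{k_p\},\{\ell_r\}$ by the full ones) is the correct way to make that combination precise. You are also right that the genuine bookkeeping issue is the $\gamma$-dependence of $S^{n,k,\ell}$, of $A_{n,k}$, of the extracted subsequences and of the limiting weights, while $\int_0^\bullet\tilde H^{n,k,\ell}_{s-}\diff X^n_s$ is $\gamma$-free; the nested extraction along $\gamma_m\downarrow0$ is the natural way to resolve it, and it is precisely what the paper's one-line proof leaves implicit.

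One step of your elaboration does not hold as you state it. You claim that the $\gamma_m$-limits $\sum_{\sigma}\tilde\xi^{0,m}_\sigma\Delta\tilde H^0_\sigma\Delta\tilde X^0_\sigma\ind_{[\sigma,\infty)}$ form a Cauchy sequence in $m$ \emph{because} $|\tilde\xi^{0,m}_\sigma|\le d$ and Condition \ref{ass:R2} controls the tail of the jump sum. Boundedness of the weights gives no Cauchy property (the weights could, say, alternate between $0$ and $1$ in $m$), and \ref{ass:R2} only controls the contribution of small jumps; likewise the existence of $\lim_m\tilde\xi^{0,m}_\sigma$ is not automatic. The Cauchy property has to come from the $\gamma$-free anchor itself: writing $d_w$ for a metric metrizing weak convergence on $(\D_{\R^d}[0,\infty),\dM)^3$ (as in Lemma \ref{lem:tightness_of_corrected_integrands_2}) and $L_{\gamma_m}$ for the law of the limiting triple produced by Proposition \ref{prop:pre_final_convergence_remainder_integrals} at level $\gamma_m$, your own estimate shows that along the nested (diagonal) subsequence the iterated $\limsup_{n,r,p}$ of $d_w\bigl(\mathcal{L}(H^n,X^n,\int_0^\bullet\tilde H^{n,k_p,\ell_r}_{s-}\diff X^n_s),\,L_{\gamma_m}\bigr)$ is bounded by a quantity $c(\gamma_m)+\mathrm{const}\cdot\eta_m\to0$; the triangle inequality then gives $d_w(L_{\gamma_m},L_{\gamma_{m'}})\to0$, hence a limit law $L$, and the $\gamma$-free triples converge to $L$ in the iterated sense. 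To see that $L$ has the asserted form, do not assert convergence of the weights; instead run one further Prokhorov extraction in $m$ for $(\tilde H^0,\tilde X^0,(\tilde\xi^{0,m}_\sigma)_{\sigma})$ (the weights take values in the compact set $[0,1]^d$), and use Condition \ref{ass:R2} exactly as in the last part of the proof of Proposition \ref{prop:pre_final_convergence_remainder_integrals} to pass from finitely many jump times to the full sum. With this repair your plan is complete and coincides with the paper's argument.
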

\begin{proof}
Based on the decomposition \eqref{decomp:remainder_integral}, combining Lemma \ref{lem:junk_vanishes_conveniently} (note that $\gamma,\eta>0$ was arbitrary and $\Pro^n(A_{n,k}^\complement)\le \eta$) and Proposition \ref{prop:pre_final_convergence_remainder_integrals} yields the claim.
\end{proof}

\subsection{Proof of our main result Theorem \ref{thm:tightness_without_AVCI}} \label{sec:proof_main_result}

We are now ready to prove Theorem \ref{thm:tightness_without_AVCI}.
\begin{proof}[Proof of Theorem \ref{thm:tightness_without_AVCI}]
    Since the topology of weak convergence on the separable metric space $(\D_{\R^{d}}[0,\infty), \dM)^3$ is metrizeable, from Proposition \ref{prop:final_weak_convergence_remainder_integrals} we can extract subsequences $\{n_m\}$, $\{\ell_r\}$, $\{k_p\}$, and a probability space $(\tilde{\Omega}^0, \tilde{\mathcal{F}}^0, \tilde{\Pro}^0)$, such that for any increasing functions $\kappa_1,\kappa_2: \N \to \N$ holds that 
    \begin{align*}
        \Big( H^{n_{\kappa_1(p)}}, \, X^{n_{\kappa_1(p)}}, \,\int_0^\bullet \tilde{H}^{n_{\kappa_1(p)},k_p,\ell_{\kappa_2(p)}}_{s-} \; \diff X^{n_{\kappa_1(p)}}_s \Big) \; \Rightarrow \; \Big( \tilde{H}^0, \, \tilde{X}^0, \sum_{ \sigma \in \mathcal{T}(\tilde{H}^0)} \, \tilde{\xi}^0_{\sigma} \Delta\tilde{H}^0_{\sigma} \Delta \tilde{X}^0_{\sigma}  \ind_{[\sigma ,\,\infty) }  \Big) 
        \end{align*}
        on $(\D_{\R^d}[0,\infty), \dM)^3$ as $p\to \infty$. Now, according to Corollary \ref{cor:convergence_without_AVCI_for_corrected_integrands}, we can find subsequences $\{n_{m(p)}\}$, $\{\ell_{r(p)}\}$ of $\{n_m\}$ and $\{\ell_r\}$ respectively with
        \begin{align*}
    		\Big( H^{n_{m(p)}}, \,X^{n_{m(p)}},\, \int_0^\bullet \, H^{n_{m(p)}}_{s-} \, &- \, \tilde{H}^{n_{m(p)},k_p,\ell_{r(p)}}_{s-} \, \diff X_s^{n_{m(p)}} \Big)\; \Rightarrow \; \Big(  \tilde H^0, \,\tilde X^0,\, \int_0^\bullet \, \tilde H^0_{s-} \, \diff \tilde X^0_s \Big) 
	   \end{align*}
    on $(\D_{\R^{d}}[0,\infty), \, \dM)^3$ as $p\to \infty$. Indeed, note that the results in Section \ref{sec:proofs_corrected_integrands} hold equally true if we replace $(H^0,X^0)$ by $(\tilde{H}^0, \tilde{X}^0)$  as both quantities coincide in law. Since $p\mapsto m(p)$, $p\mapsto r(p)$ are increasing, the convergence of the remainder integrals above holds true for these subsequences. Further, without loss of generality we may assume 
      \begin{align}
        \Big( H^{n_{m(p)}}, \, X^{n_{m(p)}}, \,&\int_0^\bullet \, H^{n_{m(p)}}_{s-} \, - \, \tilde{H}^{n_{m(p)},k_p,\ell_{r(p)}}_{s-} \, \diff X_s^{n_{m(p)}} ,\, \int_0^\bullet \tilde{H}^{n_{m(p)},k_p,\ell_{r(p)}}_{s-} \; \diff X^{n_{m(p)}}_s \Big) \notag\\[1ex]
        &\Rightarrow \; \Big( \tilde{H}^0, \, \tilde{X}^0,\, \int_0^\bullet \, \tilde H^0_{s-} \, \diff \tilde X^0_s,\, \sum_{ \sigma \in \mathcal{T}(\tilde{H}^0)} \, \tilde{\xi}^0_{\sigma} \Delta\tilde{H}^0_{\sigma} \Delta \tilde{X}^0_{\sigma}  \ind_{[\sigma ,\,\infty)}  \Big) \label{eq:joint_convergence_of_all_final_quantities_to_piece_together}
        \end{align}
        on $(\D_{\R^{d}}[0,\infty), \, \dM)^4$ as $p\to \infty$. Now, given that $(H^0,X^0)$ and $(\tilde H^0, \tilde X^0)$ coincide in law on $(\D_{\R^{d}}[0,\infty), \, \dM)^2$ endowed with its Borel $\sigma$-algebra, Condition \eqref{ass:R1} also holds for the pair $(\tilde H^0, \tilde X^0)$. Therefore, it holds $\tilde \Pro^0$-almost surely that coordinatewise
        $$ \Delta \Big( \int_0^t \, \tilde H^0_{s-} \, \diff \tilde X^0_s\Big) \Delta \Big( \sum_{ \sigma \in \mathcal{T}(\tilde{H}^0)} \, \tilde{\xi}^0_{\sigma} \Delta\tilde{H}^0_{\sigma} \Delta \tilde{X}^0_{\sigma}  \ind_{\{\sigma  \, \le \, t\}}\Big) \; \ge \; 0$$
        for all $t\ge 0$, since $\operatorname{Disc}_{\tilde\Pro^0}(\tilde H^0, \tilde X^0)$ is at most countable.
        Making use of the fact the map $(\alpha,\beta)\mapsto \alpha+\beta$ is continuous in $(\D_{\R^d}[0,\infty),\dM)^2$ at all points $(\alpha,\beta)$ such that $\Delta \alpha^{(i)}(t) \Delta \beta^{(i)}(t)\ge 0$ for all components $1\le i\le r$ and all $t\ge 0$ (see e.g. \cite[Thm.~12.7.3]{whitt}), the generalised continuous mapping theorem yields 
        \begin{align}
         \Big( &H^{n_{m(p)}}, \, X^{n_{m(p)}}, \,\int_0^\bullet \, H^{n_{m(p)}}_{s-} \, \diff X_s^{n_{m(p)}} \Big) \notag \\[1ex]
        &= \; \Big( H^{n_{m(p)}}, \, X^{n_{m(p)}}, \,\int_0^\bullet \, H^{n_{m(p)}}_{s-} \, - \, \tilde{H}^{n_{m(p)},k_p,\ell_{r(p)}}_{s-} \, \diff X_s^{n_{m(p)}} \, +\, \int_0^\bullet \tilde{H}^{n_{m(p)},k_p,\ell_{r(p)}}_{s-} \; \diff X^{n_{m(p)}}_s \Big) \notag\\[1ex]
        &\Rightarrow \; \Big( \tilde{H}^0, \, \tilde{X}^0,\, \int_0^\bullet \, \tilde H^0_{s-} \, \diff \tilde X^0_s\, + \, \sum_{ \sigma \in \mathcal{T}(\tilde{H}^0)} \, \tilde{\xi}^0_{\sigma} \Delta\tilde{H}^0_{\sigma} \Delta \tilde{X}^0_{\sigma}  \ind_{[\sigma ,\,\infty)}  \Big) \label{eq:very_final_convergence}
        \end{align}
        on $(\D_{\R^{d}}[0,\infty), \, \dM)^3$ as $p\to \infty$.
\end{proof}

\subsection{Proof of Proposition \ref{cor:integral_conv_without_AVCI} } \label{sec:proof_proposition_anti_AVCI}
In this section, we will give a brief outline of how to prove Proposition \ref{cor:integral_conv_without_AVCI}. We will proceed exactly as in the proofs in Sections \ref{sec:proofs_corrected_integrands}--\ref{sec:proof_main_result}, except for the limiting behaviour of the first summand on the right-hand side of \eqref{decomp:remainder_integral}. Under the assumptions of Proposition \ref{cor:integral_conv_without_AVCI}, the integral part of this term can be shown to asymptotically equal one. For simplicity, let us assume $d=1$. The proof for $d>1$ can be adapted in a straightforward fashion.

Conduct the proof in full analogy to the one of Theorem \ref{thm:tightness_without_AVCI} until \eqref{decomp:remainder_integral}, where for the $\xi^{n,k,\ell}_j$ in the decomposition \eqref{decomp:X^n_for_remainder_integrals} we make use of the specific construction \eqref{eq:interpolation_2_monotone_piece}, while the $\zeta^{n,k,\ell}_j$ in decomposition \eqref{decomp:H^n_for_remainder_integrals_non_adapted} are constructed according to \eqref{eq:interpolation_1_monotone_piece}. Assume that the elements of $T\cup \Theta_\ell$ are continuity point of all $H^n$, $n\ge 0$. Indeed, this comes with no loss of generality as the sets $\operatorname{Disc}_{\Pro^n}(H^n)$ are known to be countable, and therefore this does not disturb the construction of the sets $\Theta_\ell$ at the beginning of Section \ref{sec:proofs}. Further, let $\ell\ge 1$ be large enough so that, due to \eqref{eq:cond_integral_conv_without_AVCI}, we have
 \begin{align*} \Pro^n \Big( \sup{}\{|X^{n}_u-X^{n}_s|\wedge |H^{n,k,\ell}_{s-}- H^{n,k,\ell}_{r-}| \, : \,\lfloor \tau^{n,k,\ell}_j \rfloor \le u< s<r\le \rho^{n,k,\ell}_j\wedge T, \, j\ge 1 \}>\frac{\gamma}{4}\Big) \, \le  \, \eta 
    \end{align*}
for all $n,k\ge 1$. Indeed, this is possible since, by definition, $\operatorname{sup}_{j\ge 1}|\rho^{n,k,\ell}_j-\tau^{n,k,\ell}_j|\le |\Theta_\ell| \to 0$ as $\ell \to \infty$. Moreover, on the event 
$$\Gamma_{n,k,\ell}(\gamma) \,:= \, \Big\{\sup{}\{|X^n_u-X^n_s|\wedge |H^n_{s-}-H^n_{r-}| \, : \, \lfloor \tau^{n,k,\ell}_j \rfloor \le u< s<r\le \rho^{n,k,\ell}_j\wedge T, \, j\ge 1 \}\, \le \, \frac \gamma 4\Big\}$$
we have, in particular, that $|X^n_{\lfloor \tau^{n,k,\ell}_j \rfloor} - X^n_s| > \gamma$ implies $|H^n_{s-}-H^n_{(\rho^{n,k,\ell}_j \wedge T)-}|\le \gamma/2$ for all $j\ge 1$ and $\lfloor \tau_j^{n,k,\ell}\rfloor \le s\le \rho_j^{n,k,\ell}\wedge T$. Inspecting the constructions \eqref{eq:interpolation_2_monotone_piece} of the 
non-decreasing step function $\xi$ in the proof of Lemma \ref{lem:approx_of_M1_by monotone_pieces_X}, we deduce that the following holds true for the quantities $\xi^{j,n,k,\ell}$ in \eqref{decomp:X^n_for_remainder_integrals} on $\Gamma_{n,k,\ell}(\gamma) \cap A_{n,k}$: if we define 
\begin{align*}
    \sigma^{n,k,\ell}_j&:=\operatorname{inf}\big \{s>\lfloor \tau^{n,k,\ell}_j \rfloor \, : \, |X^n_{\lfloor \tau^{n,k,\ell}_j \rfloor } - X^n_s| \, > \, \gamma \big\} \wedge \rho^{n,k,\ell}_j \wedge T,
\end{align*}
then $\xi^{j,n,k,\ell}\equiv 0$ on $[\lfloor \tau^{n,k,\ell}_j\rfloor, \sigma^{n,k,\ell}_j)$. Clearly, it must also hold that 
$$ \operatorname{inf}\big \{s>\lfloor \tau^{n,k,\ell}_j \rfloor \, : \, |H^n_{(\rho^{n,k,\ell}_j\wedge T)-} - H^n_{s-}| \, \le \, \gamma/4 \big\} \; \le \;\sigma^{n,k,\ell}_j $$
since we have assumed to be on the event $\Gamma_{n,k,\ell}(\gamma)$. Thus, by construction \eqref{eq:interpolation_1_monotone_piece} of the processes $\zeta^{j,n,k,\ell}$ and since $\rho^{n,k,\ell}_j$ and $T$ were chosen to be continuity points of $H^n$, according to Lemma \ref{lem:approx_of_M1_by monotone_pieces_X} we obtain that $\zeta^{j,n,k,\ell}\equiv 1$ on $( \bar{\sigma}^{n,k,\ell}_j , \rho^{n,k,\ell}_j\wedge T]$ for some $\bar{\sigma}^{n,k,\ell}_j<\sigma^{n,k,\ell}_j$ (where $|H^n_{\bar{\sigma}^{n,k,\ell}_j}-H^n_{\rho^{n,k,\ell}_j\wedge T}|\le \gamma/2$). Thus, from $\xi^{j,n,k,\ell}\equiv 0$ on $[\lfloor \tau^{n,k,\ell}_j\rfloor, \sigma^{n,k,\ell}_j)\supset [\lfloor \tau^{n,k,\ell}_j\rfloor, \tilde{\sigma}^{n,k,\ell}_j]$, \, $\xi^{j,n,k,\ell}_{\rho^{n,k,\ell}_j\wedge T}=1$, and $\zeta^{j,n,k,\ell}\equiv 1$ on $(\tilde{\sigma}^{n,k,\ell}_j , \rho^{n,k,\ell}_j\wedge T]$ we deduce
$$Y^{j,n,k,\ell}_{\rho^{n,k,\ell}_j\wedge T} \; = \;\int_{\tau^{n,k,\ell}_j\wedge T }^{\rho^{n,k,\ell}_j\wedge T} \ind_{(\tilde{\sigma}^{n,k,\ell}_j, \, \rho^{n,k,\ell}_j\wedge T]} \, \diff \xi^{j,n,k,\ell}_s \; = \; \xi^{j,n,k,\ell}_{\rho^{n,k,\ell}_j\wedge T} - \xi^{j,n,k,\ell}_{\tilde{\sigma}^{n,k,\ell}_j} \; = \; 1-0 \; =\;  1.$$

With this information at hand, first replace all instances of $A_{n,k}$ by $\Gamma_{n,k,\ell}(\gamma) \cap A_{n,k}$ and continue with the proofs from Lemma \ref{lem:junk_vanishes_conveniently} to Lemma \ref{lem:conv_weighing_quant_remainder_integrals_2}. In analogy to the proof of Lemma \ref{lem:properties_of_weighing_limit_terms}, we can then establish that $\tilde{Y}^{j,0,k,\ell}\equiv 1$ on $[\lceil \tilde{\tau}^{0,k,\ell}_j\rceil, \infty)$ and, by definition, $\tilde{Y}^{j,0,k,\ell}\equiv 0$ on $[0,\lfloor \tilde{\tau}^{0,k,\ell}_j\rfloor)$. And finally, as in the proof of Lemma \ref{lem:conv_weighing_quant_remainder_integrals_2} this translats to the $\tilde{Y}^{j,0,k}$ so that they are of the form 
$$\tilde{Y}^{j,0,k} \; = \; \ind_{[\tilde{\tau}^{0,k}_j, \, \infty)}.$$
Now follow the procedure until the end of Section \ref{sec:proof_main_result} and note that, due to the above, $\tilde{\xi}^0_\sigma\equiv 1$ for all $\sigma \in \mathcal{T}(\tilde{H}^0)$. This, however, implies that the weak limit in \eqref{eq:tightness_int_conv} is 
\begin{align*}
    \operatorname{Law}\Big( \tilde H^0, \, &\tilde{X}^0,\,\int_0^\bullet \, \tilde H^0_{s-} \; \diff \tilde X^0_s \; + \; \sum_{\sigma \in \mathcal{T}(\tilde H^0)} \,  \Delta \tilde H^0_{\sigma} \, \Delta \tilde X^0_{\sigma} \, \ind_{[\sigma ,\,\infty)}\Big) \; = \\
     &\operatorname{Law}\Big(  H^0, \, X^0,\,\int_0^\bullet \,  H^0_{s-} \; \diff X^0_s \; + \; \sum_{\sigma \in \mathcal{T}(H^0)} \,  \Delta H^0_{\sigma} \, \Delta X^0_{\sigma} \, \ind_{[\sigma ,\,\infty) }\Big)
\end{align*} 
which does not depend on the specific subsequence, hence there is actual weak convergence \eqref{eq:int_conv_without_AVCI}.

\subsection{Proof of Corollary \ref{cor:main_result_M1_J1_case} } \label{sec:proof_corollary_J1_integrators_main_result}
\begin{proof}[Proof of Corollary \ref{cor:main_result_M1_J1_case}]
    The proof follows the exact same structure of the proof of Theorem \ref{thm:tightness_without_AVCI} in Sections \ref{sec:proofs_corrected_integrands}--\ref{sec:proof_main_result}, and we will simply highlight the instances where specific modifications are required.  

    Define sets
    $$ \tilde A_{n,\ell} \; := \; \biggl\{ \inf{} \biggl\{ \sup{i\le r} \sup{s,t \, \in \, [t_{i-1},t_{i})} |X^n_s-X^n_t|\; : \; t_0<...<t_r \text{ and } \inf{1\le i \le r}(t_i-t_{i-1})\ge |\Theta_\ell|\biggr\} \,> \, \frac \gamma 2 \biggr\}$$
    with $t_0=0$ and $t_r=T$, where the outer infimum runs over all partitions of $[0,T]$ with mesh size no smaller than $|\Theta_\ell|$. Due to the $J_1$ tightness of the $X^n$, by \cite[Thm.~13.2]{billingsley} it holds that $\limsup_{\ell\to \infty}\limsup_{n\to \infty} \Pro^n(\tilde A_{n,k,\ell})=0$. Therefore, note that replacing $A_{n,k}$ in \eqref{eq:auxiliary_set_probability_bound_proof_remainder_integrals} by $A_{n,k,\ell}:=A_{n,k}\cap \tilde{A}_{n,\ell}$ leaves the proof of Theorem \ref{thm:tightness_without_AVCI} unaffected. With regards to Section \ref{sec:proof_dealing_with_the_remainder_integrals}, the following is clearly true on $\tilde{A}_{n,\ell}$: for all $k,j\ge 1$ there exists a random time $\kappa^{n,k,\ell}_j \in [\tau^{n,k,\ell}_j\wedge T,\rho^{n,k,\ell}\wedge T)$ such that 
    $$ \operatorname{max} \biggl\{ \sup{\tau^{n,k,\ell}\le r,s < \rho^{n,k,\ell}}|X^n_r-X^n_s| \; , \; \sup{t\le r,s < \rho^{n,k,\ell}}|X^n_r-X^n_s| \biggr\}\; \le \; \frac \gamma 2.$$
    Consequently, the $\xi^{j,n,k,\ell}$ from \eqref{decomp:X^n_for_remainder_integrals}---obtained by virtue of Lemma \ref{lem:approx_of_M1_by monotone_pieces_X} with choice \eqref{eq:interpolation_1_monotone_piece}---are single-jump processes of the form $\xi^{j,n,k,\ell} = \ind_{[\kappa^{n,k,\ell}_j ,\, \infty)}$, and the $Y^{j,n,k,\ell}$ in \eqref{eq:defi_scaling_terms_Y_n} become 
    $$Y^{j,n,k,\ell}=\zeta^{j,n,k,\ell}_{\kappa^{n,k,\ell}_j-} \ind_{[\kappa^{n,k,\ell}_j, \, \infty)}$$
    where we recall $0\le \zeta^{j,n,k,\ell}\le 1$. Evidently, these processes are tight in the $J_1$ topology, and so all convergences related to the $Y^{j,n,k,\ell}$ and $\tilde Y^{j,0,k,\ell}$ in Section \ref{sec:proof_dealing_with_the_remainder_integrals} can be seen to hold in $J_1$ instead of $M_1$. Thus, it is straightforward to show that the convergences in Proposition \ref{prop:pre_final_convergence_remainder_integrals} and Proposition \ref{prop:final_weak_convergence_remainder_integrals}, in fact, hold on $(\D_{\R^d}[0,\infty), \dM) \times (\D_{\R^{d}}[0,\infty), \dJ)^2$. As a consequence, also the convergence in \eqref{eq:joint_convergence_of_all_final_quantities_to_piece_together} can be strengthened, namely to be on $(\D_{\R^d}[0,\infty), \dM) \times (\D_{\R^{d}}[0,\infty), \dJ)^3$. Finally, since $\big|\Delta \int_0^s  H^{n}_{r-}  - \tilde{H}^{n,k,\ell}_{r-}  \diff X_r^{n}\big| \vee  \big|\Delta \int_0^s \tilde{H}^{n,k,\ell}_{r-}  \diff X_r^{n}\big| \le 2|H^n|_s^* |\Delta X^n_s|$ for all $s>0$ almost surely, we can apply Lemma \ref{lem:joint_convergence_from_coinciding_jump_times} together with the continuous mapping theorem and \cite[Prop.~A.3]{andreasfabrice_theorypaper} in order to obtain \eqref{eq:very_final_convergence} on $(\D_{\R^d}[0,\infty), \dM) \times (\D_{\R^{2d}}[0,\infty), \dJ)$.
\end{proof}

\newpage
\section*{Appendix}
In this appendix we collect a few technical lemmas.
\begin{lem} \label{lem:existence_of_suitable_sequence_not_hit_by_jumps}
    The set 
    $$W \; := \; \{ a>0 \, : \, \exists s>0 \, \text{ with }\,  \Pro^0(|\Delta H^0_s|=a \text{ or } |\Delta X^0_s|=a)>0\}$$
    is at most countable.
\end{lem}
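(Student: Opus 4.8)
The plan is to reduce the a priori uncountable union over all times $s>0$ implicit in the definition of $W$ to a union over the at most countable set $\operatorname{Disc}_{\Pro^0}(H^0,X^0)$, and then to invoke the elementary fact that the law of a real-valued random variable has at most countably many atoms.

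First, I recall from the beginning of this section that $\operatorname{Disc}_{\Pro^0}(H^0,X^0)=\{s>0:\Pro^0(|\Delta H^0_s|\vee|\Delta X^0_s|>0)>0\}$ is at most countable (see \cite[Lem.~VI.3.12]{shiryaev}). If $s>0$ does \emph{not} belong to this set, then $\Pro^0(|\Delta H^0_s|\vee|\Delta X^0_s|>0)=0$, so for every $a>0$ the trivial inclusions $\{|\Delta H^0_s|=a\}\subseteq\{|\Delta H^0_s|>0\}$ and $\{|\Delta X^0_s|=a\}\subseteq\{|\Delta X^0_s|>0\}$ give $\Pro^0(|\Delta H^0_s|=a)=\Pro^0(|\Delta X^0_s|=a)=0$. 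Hence such an $s$ witnesses no element of $W$, and therefore
\[
W \;=\; \bigcup_{s\in\operatorname{Disc}_{\Pro^0}(H^0,X^0)} W_s,\qquad W_s\;:=\;\{a>0:\Pro^0(|\Delta H^0_s|=a)>0\}\,\cup\,\{a>0:\Pro^0(|\Delta X^0_s|=a)>0\}.
\]

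Next, for each fixed $s>0$ the quantities $|\Delta H^0_s|$ and $|\Delta X^0_s|$ are $[0,\infty)$-valued random variables, so their laws are Borel probability measures on $\R$. Since, for each $m\ge 1$, a probability measure on $\R$ can carry at most $m$ atoms of mass strictly larger than $1/m$, it has at most countably many atoms; thus both sets defining $W_s$ are at most countable, and hence so is $W_s$. Consequently $W$ is a countable union of at most countable sets, and therefore at most countable. There is no genuine obstacle in this argument; the only point deserving a moment's care is the reduction to $\operatorname{Disc}_{\Pro^0}(H^0,X^0)$, which, however, follows at once from the set inclusion displayed above.
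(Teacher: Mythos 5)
Your proof is correct, and it takes a genuinely different (and arguably cleaner) route than the paper. The paper argues by contradiction: it assumes uncountably many $a$ in some interval $(b_1,b_2)$ have probability above a fixed threshold $\varepsilon$, then uses a cardinality/pigeonhole argument together with $\sigma$-additivity and the classical fact that, on any compact time interval, only finitely many $s$ can satisfy $\Pro^0(|\Delta H^0_s|\vee|\Delta X^0_s|>c)>\beta$ for fixed $c,\beta>0$. Your argument instead decomposes $W$ directly: you observe that any time $s$ contributing to $W$ must lie in the countable set $\operatorname{Disc}_{\Pro^0}(H^0,X^0)$, reducing the union over all $s>0$ to a countable one, and then for each fixed $s$ you invoke the elementary fact that a Borel probability measure on $\R$ has at most countably many atoms. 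Both approaches ultimately rest on the same structural input (the countability of $\operatorname{Disc}_{\Pro^0}(H^0,X^0)$, or the equivalent finiteness-of-large-jumps fact the paper cites), but your version makes the countable union explicit at the outset and dispenses with the contradiction and the auxiliary threshold $\varepsilon$; it is shorter, more transparent, and avoids the slightly delicate bookkeeping about the map $\phi$ and the set $\Lambda$ that the paper's proof carries along.

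One small technical point worth noting in your write-up: the step from $\Pro^0(|\Delta H^0_s|=a\ \text{or}\ |\Delta X^0_s|=a)>0$ to the decomposition into $W_s$ uses implicitly that $\Pro^0(A\cup B)>0$ if and only if $\Pro^0(A)>0$ or $\Pro^0(B)>0$; this is trivial, but stating it makes the identification $W=\bigcup_{s}W_s$ watertight. The argument is otherwise complete.
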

\begin{proof}
Assume for the sake of a contradiction that there exists $\varepsilon>0$, an interval $(b_1,b_2) \subset [0,\infty)$, and uncountably many $a \in (b_1,b_2) $ such that $\Pro^0(|\Delta H^0_{\phi(a)}|=a \text{ or } |\Delta X^0_{\phi(a)}|=a)>\varepsilon$ for some map $\phi:(b_1,b_2)\to [0,\infty)$. If the set 
    $$ \Lambda \; := \; \{ \phi(a) \, : \, \Pro^0(|\Delta H^0_{\phi(a)}|=a \text{ or } |\Delta X^0_{\phi(a)}|=a)\, > \, \varepsilon\}$$ 
    is countable, then there is at least one $s \in \Lambda$ such that $\phi$ maps infinitely many $a\in (b_1,b_2)$ to $s$. By $\sigma$-additivity of $\Pro^0$, this yields a contradiction to the finiteness of the probability measure $\Pro^0$. Hence, $\Lambda$ must be uncountable and therefore, in particular, there is an interval $[0,T]$ containing infinitely many distinct $\phi(a) \in \Lambda$, which implies
    $$ \Pro^0(|\Delta H^0_{\phi(a)}|>b_1 \text{ or } |\Delta X^0_{\phi(a)}|>b_1)\, > \, \varepsilon$$
    for infinitely many distinct $\phi(a)$, since $b_1<a$. This however, yields a contradiction as it is well-known that $\{s>0: \Pro^0(|\Delta H^0_s| \vee |\Delta X^0_s| > c) > \beta)\cap [0,T]$ is finite for each $c,\beta>0$.
\end{proof}

For $t\ge 0$, $\mu>0$ and $a>0$ we define
\begin{align}
    \varsigma_{a,t,\mu}(\alpha) \; := \; \operatorname{inf}\{s>t \, : \, \operatorname{sup} \{ |\alpha^{(i)}(r)-\alpha^{(i)}(s)| \, : \, t \vee (
    s- \mu) \le r \le s, \, 1\le i \le d\} > a \}. \label{eq:defi_stopping_times_for_appendix_lemma_on_defi_of_suitable_sequence}
\end{align}
for all $\alpha \in \D_{\R^d}[0,\infty)$, where $\alpha^{(i)}$ denotes the $i$-th coordinate of $\alpha$. Further let $\operatorname{Disc}(\alpha)=\{s>0: |\Delta \alpha(s)|>0\}$. Following in the style of \cite[p.340/341]{shiryaev}, we state the following lemma.

\begin{lem} \label{lem:prerequisite_suitable_sequence_of_large_increments_stopping_time_defi}
    Under the notation of \eqref{eq:defi_stopping_times_for_appendix_lemma_on_defi_of_suitable_sequence} it holds that
    \begin{enumerate}[(i)]
        \item for all $\alpha \in \D_{\R^d}[0,\infty)$, $t\ge 0$ and $\mu>0$, the map $a \mapsto \varsigma_{a,t,\mu}(\alpha)$ is non-decreasing and right-continuous,
        \item for given $t\ge 0$, $\mu>0$ and $\alpha \in \D_{\R^d}[0,\infty)$, the set 
        \begin{align*}
            V_{t,\mu}(\alpha) \; := \; \{ a>0 \; : \; \varsigma_{a^-,t,\mu}(\alpha) < \varsigma_{a,t,\mu}(\alpha) \} ,
        \end{align*} 
        where $\varsigma_{a^-,t,\mu}(\alpha):= \lim_{\tilde{a}\uparrow a} \tau_{\, \tilde{a},t,\mu}(\alpha)$, is at most countable,
        \item for all $t\ge 0$, $\mu>0$ and $a>0$, the maps $\alpha \mapsto \varsigma_{a,t,\mu}(\alpha)$ and $\alpha \mapsto \varsigma_{a^-,t,\mu}(\alpha)$ are Borel-measurable on $(\D_{\R^d}[0,\infty), \dM)$,
        \item for all $t\ge 0$, $\mu>0$ and $a>0$, the map $\alpha \mapsto \varsigma_{a,t,\mu}(\alpha)$, $(\D_{\R^d}[0,\infty),\dM) \to ([0,\infty),|\cdot|)$ is continuous at each point $\alpha$ satisfying $a \notin V_{t,\mu}(\alpha)$, $t\notin \operatorname{Disc}(\alpha)$ and $\mu \notin \{|x-y|: x,y \in \operatorname{Disc}(\alpha)\}$.
    \end{enumerate}
\end{lem}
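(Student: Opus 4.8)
The plan is to prove (i)--(iv) in that order, since each feeds the next; (iv) is the only substantial part, and the whole is modelled on \cite[pp.~340--341]{shiryaev}. Throughout write $F(\alpha,s):=\operatorname{sup}\{|\alpha^{(i)}(r)-\alpha^{(i)}(s)|:t\vee(s-\mu)\le r\le s,\ 1\le i\le d\}$, so $\varsigma_{a,t,\mu}(\alpha)=\operatorname{inf}\{s>t:F(\alpha,s)>a\}$. \textbf{Part (i):} monotonicity in $a$ is immediate since raising $a$ shrinks $\{s>t:F(\alpha,s)>a\}$; for right-continuity, given $\eps>0$ choose $s\in[\varsigma_{a,t,\mu}(\alpha),\varsigma_{a,t,\mu}(\alpha)+\eps)$ with $F(\alpha,s)>a$ (the set being empty is the trivial case), hence $F(\alpha,s)>a''$ for some $a''>a$, so $\varsigma_{a'',t,\mu}(\alpha)\le s<\varsigma_{a,t,\mu}(\alpha)+\eps$; with monotonicity this gives $\lim_{a'\downarrow a}\varsigma_{a',t,\mu}(\alpha)=\varsigma_{a,t,\mu}(\alpha)$. \textbf{Part (ii)} then follows at once: a non-decreasing function on $(0,\infty)$ has at most countably many discontinuities, and by (i) $V_{t,\mu}(\alpha)$ is exactly the set of such discontinuities of $a\mapsto\varsigma_{a,t,\mu}(\alpha)$.

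\textbf{Part (iii).} First, $s\mapsto F(\alpha,s)$ is right-continuous: $\alpha^{(i)}(\cdot)$ is right-continuous, the floor $t\vee(s-\mu)$ varies continuously in $s$, and for a càdlàg function $\operatorname{sup}_{[c+h,d+h]}\alpha^{(i)}\to\operatorname{sup}_{[c,d]}\alpha^{(i)}$ and likewise for the infimum as $h\downarrow0$. Hence $\{\alpha:\varsigma_{a,t,\mu}(\alpha)<c\}=\bigcup_{q\in\Q\cap(t,c)}\{\alpha:F(\alpha,q)>a\}$, and for fixed rational $q$ the càdlàg property reduces the suprema/infima defining $F(\alpha,q)$ to countable operations over rational times, so that $F(\alpha,q)$ is a countable combination of the coordinate evaluations $\alpha\mapsto\alpha^{(i)}(r)$, which are Borel on $(\D_{\R^d}[0,\infty),\dM)$. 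Thus $\alpha\mapsto\varsigma_{a,t,\mu}(\alpha)$ is Borel, and $\alpha\mapsto\varsigma_{a^-,t,\mu}(\alpha)=\operatorname{sup}_n\varsigma_{a-1/n,t,\mu}(\alpha)$ is Borel as a countable supremum.

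\textbf{Part (iv).} Fix $\alpha$ with $a\notin V_{t,\mu}(\alpha)$, $t\notin\operatorname{Disc}(\alpha)$ and $\mu\notin\{|x-y|:x,y\in\operatorname{Disc}(\alpha)\}$, let $\alpha_n\to\alpha$ in $\dM$ and set $s_0:=\varsigma_{a,t,\mu}(\alpha)$. The key input is the standard $M_1$-continuity (see \cite{whitt}) of the supremum and infimum over a fixed closed interval at paths continuous at the endpoints; since $F(\beta,s)$ is built from $\operatorname{sup}_{[t\vee(s-\mu),s]}\beta^{(i)}$, $\operatorname{inf}_{[t\vee(s-\mu),s]}\beta^{(i)}$ and $\beta^{(i)}(s)$, this yields $F(\alpha_n,s)\to F(\alpha,s)$ whenever $s,s-\mu\notin\operatorname{Disc}(\alpha)$ (the floor $t$ being harmless as $t\notin\operatorname{Disc}(\alpha)$). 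For $\limsup_n\varsigma_{a,t,\mu}(\alpha_n)\le s_0$: given $\eps>0$, part (i) provides $a''>a$ and $s\in[s_0,s_0+\eps)$ with $F(\alpha,s)>a''$; by right-continuity of $F(\alpha,\cdot)$ there is an interval to the right of $s$ where $F(\alpha,\cdot)>a$, and choosing $s^{**}$ in it with $s^{**},s^{**}-\mu\notin\operatorname{Disc}(\alpha)$ we get $F(\alpha_n,s^{**})\to F(\alpha,s^{**})>a$, hence $\varsigma_{a,t,\mu}(\alpha_n)\le s^{**}<s_0+\eps$ eventually; let $\eps\downarrow0$.

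For $\liminf_n\varsigma_{a,t,\mu}(\alpha_n)\ge s_0$ I argue by contradiction along a subsequence with $\varsigma_{a,t,\mu}(\alpha_n)\to L<s_0$. Each such $\alpha_n$ has an increment exceeding $a$ over a backward window of length $\le\mu$ ending near $L$, realised at times $r_n\le s_n$ in some coordinate $i_n$ with $s_n\to L$; passing to a further subsequence so that $i_n\equiv i$ and $r_n\to r^*$ with $t\vee(L-\mu)\le r^*\le L$, and using that under $M_1$ the values $\alpha_n(r_n)$, $\alpha_n(s_n)$ cluster in the jump segments of $\alpha$ at $r^*$ and $L$, one obtains $|\tilde y-\tilde z|\ge a$ for some $\tilde y\in\{\alpha^{(i)}(r^*-),\alpha^{(i)}(r^*)\}$, $\tilde z\in\{\alpha^{(i)}(L-),\alpha^{(i)}(L)\}$. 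A short case analysis --- using the one-sided continuity of $\alpha$, approximation of left limits from the left, and $t\notin\operatorname{Disc}(\alpha)$ at the floor --- then produces a time $s'\le L$ with $F(\alpha,s')\ge a$, whence $\varsigma_{a^-,t,\mu}(\alpha)\le s'<s_0=\varsigma_{a,t,\mu}(\alpha)$, contradicting $a\notin V_{t,\mu}(\alpha)$; the one configuration in which this would fail, namely $L-r^*=\mu$ with $r^*,L\in\operatorname{Disc}(\alpha)$ and $\tilde y,\tilde z$ both left limits, is excluded by $\mu\notin\{|x-y|:x,y\in\operatorname{Disc}(\alpha)\}$. I expect this matching of the slightly-too-long window thrown up by the limiting procedure to the fixed length $\mu$ to be the main obstacle, and it is exactly where the genericity hypotheses on $t$ and $\mu$ are spent.
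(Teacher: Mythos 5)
Your proposal follows essentially the same route as the paper's proof: (i)--(ii) via monotonicity of $a\mapsto\varsigma_{a,t,\mu}(\alpha)$, (iii) by standard measurability of time evaluations, and for (iv) the $\limsup$ direction through convergence at continuity points of the limit, and the $\liminf$ direction by contradiction, letting the approximating increment times accumulate at $r^*\le L<\varsigma_{a,t,\mu}(\alpha)$ and using that under $M_1$ the values $\alpha_n(r_n)$, $\alpha_n(s_n)$ cluster on the jump segments of $\alpha$ (the paper realises exactly this clustering through parametric representations $(u_n,p_n)$), followed by a case analysis that yields $\varsigma_{a^-,t,\mu}(\alpha)\le L<\varsigma_{a,t,\mu}(\alpha)$ and thus contradicts $a\notin V_{t,\mu}(\alpha)$; this is the paper's argument with cases (1)--(4). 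Two caveats. First, the case analysis is precisely where the work of (iv) lies and you only gesture at it; note also that in the left-limit cases one gets, for each $\delta>0$, a time $s'=s'(\delta)\le L$ with your $F(\alpha,s')>a-\delta$ rather than a single $s'$ with $F(\alpha,s')\ge a$, which still suffices since it gives $\varsigma_{a-\delta,t,\mu}(\alpha)\le L$ for every $\delta$. Second, the exceptional configuration you name is misidentified: if $\tilde y=\alpha^{(i)}(r^*-)$ and $\tilde z=\alpha^{(i)}(L-)$ are \emph{both} left limits with $L-r^*=\mu$, no hypothesis is needed --- slide the window left by taking $r'\uparrow r^*$ and $s'=r'+\mu\uparrow L$. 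The irreducible configuration is rather $\tilde y=\alpha^{(i)}(r^*-)$ paired with the right-hand value $\tilde z=\alpha^{(i)}(L)$ over a saturated window $L-r^*=\mu$; this forces $r^*,L\in\operatorname{Disc}(\alpha)$ and is therefore excluded by $\mu\notin\{|x-y|:x,y\in\operatorname{Disc}(\alpha)\}$, which is exactly how the paper spends the hypothesis in its case (3) (to make $t\vee(p(z)-\mu)$ a continuity point). So your conclusion and use of the genericity assumptions are sound, but this bookkeeping must be corrected when the cases are actually written out.
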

\begin{proof}
    (i) is trivial. (ii) As a non-decreasing, left-continuous map, $a \mapsto \tau_{a,t,\mu}(\alpha)$ has at most countably many discontinuities, hence $V_{t,\mu}(\alpha)$ is at most countable. (iii) follows from standard arguments based on the right-continuity of the paths and the fact that the Borel $\sigma$-algebra is generated by the time evaluations. For the brevity of this account, we do not provide a detailed proof.
    (iv) Let $\dM(\alpha_n , \alpha)\to 0$ and $a \notin V_{t,\mu}(\alpha)$. We begin by showing that $\limsup_{n\to \infty} \varsigma_{a,t,\mu}(\alpha_n)\le \varsigma_{a,t,\mu}(\alpha)$. Let $\varepsilon>0$ and choose $r,s \in [0,\infty) \setminus \operatorname{Disc}(\alpha)$ such that $t\vee (s-\mu)\le r \le s$ as well as $s<\varsigma_{a,t,\mu}(\alpha)+\varepsilon$, and $|\alpha(r)-\alpha(s)|>a$. It is well-known that M1 convergence implies pointwise convergence at continuity points of the limit and so we deduce $|\alpha_n(r)-\alpha_n(s)|>a$ for large enough $n$. Hence, $\varsigma_{a,t,\mu}(\alpha_n)\le s < \varsigma_{a,t,\mu}(\alpha)+\varepsilon$ and, since $\varepsilon$ was arbitrary, it follows $\limsup_{n\to \infty} \varsigma_{a,t,\mu}(\alpha_n)\le \varsigma_{a,t,\mu}(\alpha)$. It remains to show $\liminf_{n\to \infty} \varsigma_{a,t,\mu}(\alpha_n)\ge \varsigma_{a,t,\mu}(\alpha)$. Fix a continuity point $T\ge \varsigma_{a,t,\mu}(\alpha)$. Without loss of generality, assume towards a contradiction that there are $\varepsilon>0$, $1\le i \le d$, and times $r_n$ with $t\vee (\varsigma_{a,t,\mu}(\alpha_n)-\mu)\le r_n \le \varsigma_{a,t,\mu}(\alpha_n)<\varsigma_{a,t,\mu}(\alpha)-\varepsilon$ such that $|\alpha^{(i)}_n(r_n)-\alpha_n^{(i)}(\varsigma_{a,t,\mu}(\alpha_n))|>a$ for all $n\ge 1$. Clearly, we may therefore also assume that $d=1$, i.e. $\alpha^{(i)}=\alpha$.
    Now, let $\Pi_n$ be the set of all parametric representations $(u_n,p_n)$ of $\alpha_n$, $n\ge 1$, on $[0,T]$; and analogously define $\Pi$ for $\alpha$. Recall that $\dM(\alpha_n,\alpha)\to 0$ as $n\to \infty$ implies 
        \begin{align*} \operatorname{inf}\{ |u_n-u|^*_1 \vee |p_n-p|^*_1 \, : \, (u_n,p_n) \in \Pi_n, (u,p) \in \Pi\} \; \to \; 0 
        \end{align*}
    as $n\to \infty$. For an introduction to parametric representations and path convergence in $M_1$, see \cite[pp.~80-83]{whitt}. Choose parametric representations $(u_n,p_n), (u,p)$ with $|u_n-u|^*_1 \vee |p_n-p|^*_1 \to 0$. Clearly, there exist $z_n,\tilde{z}_n \in [0,1]$ such that $p_n(z_n)=\varsigma_{a,t,\mu}(\alpha_n), p_n(\tilde{z}_n)=r_n$, as well as $|u_n(z_n)-u_n(\tilde{z}_n)|>a$. Without loss of generality, assume $z_n\to z$, $\tilde{z}_n\to \tilde{z}$, for some $z,\tilde{z} \in [0,1]$. Otherwise pass to a subsequence. An application of the triangle inequality yields
    \begin{align*}
        a \; < \; |u_n(z_n) - u_n(\tilde{z}_n)| \; \le \; 2|u_n-u|^*_1 \; + \; |u(z_n) - u(\tilde{z}_n)|.
    \end{align*}
    By continuity of $u$, we deduce $|u(z)-u(\tilde{z})|\ge a$. Moreover, another simple application of the triangle inequality and the continuity of $p$ imply that $\varsigma_{a,t,\mu}(\alpha_n)=p_n(z_n)\to p(z)$ and $p_n(\tilde z_n)\to p(\tilde z)$, yielding further that $t\vee (p(z)-\mu)\le  p(\tilde{z}) \le  p(z) \le \varsigma_{a,t,\mu}(\alpha)-\varepsilon$. Let $\delta>0$ and distinguish four cases: 
    (1) if $p(z),p(\tilde{z}) \notin \operatorname{Disc}(\alpha)$, then it holds $u(z)=\alpha(p(z))$, $u(\tilde{z})=\alpha(p(\tilde{z}))$, and thus $a \le |u(z)-u(\tilde{z})| =|\alpha(p(z))-\alpha(p(\tilde z))|$ implies $\varsigma_{a^-,t,\mu}(\alpha) \le \varsigma_{a,t,\mu}(\alpha)-\varepsilon$. Since $a \notin V_{t,\mu}(\alpha)$ we know $\varsigma_{a^-,t,\mu}(\alpha)=\varsigma_{a,t,\mu}(\alpha)$, hence there is a contradiction. 
    (2) if $p(z) \in \operatorname{Disc}(\alpha)$ and $p(\tilde{z})=p(z)$, then due to the fact that $u(z), u(\tilde{z})$ lie on the completed graph of $\alpha$, $u(z),u(\tilde{z}) \in [\alpha(p( z)-) \wedge \alpha(p( z)), \alpha(p( z)-) \vee \alpha(p(z))]$. From $|u(z)-u(\tilde{z})|\ge a$ it thus follows $|\alpha(p( z))-\alpha(p( z)-)|\ge a$. Hence, there exists a time $r$ with $t\vee (p(z)-\mu)<r<p(z)\le \varsigma_{a,t,\mu}(\alpha)-\varepsilon$ such that $|\alpha(r)-\alpha(p(z))|> a-\delta$, and by consequence $\varsigma_{a-\delta,t,\mu}(\alpha)\le \varsigma_{a,t,\mu}(\alpha)-\varepsilon$. Since $\delta$ was arbitrary, we deduce $\varsigma_{a^-,t,\mu}(\alpha) \le \varsigma_{a,t,\mu}(\alpha)-\varepsilon$, which yields a contradiction as in the first case.
    (3) if $p(z) \in \operatorname{Disc}(\alpha)$ and $p(\tilde z) < p(z)$, then due to $t\notin \operatorname{Disc}(\alpha)$ and $\mu \notin \{|x-y|: x,y \in \operatorname{Disc}(\alpha)\}$ we deduce $t\vee (p(z)-\mu) \notin \operatorname{Disc}(\alpha)$. Since $u(z)$ lies on the completed graph of $\alpha$, as in case (2) from $|u(z)-u(\tilde{z})|\ge a$ we deduce the existence of a time $s$ with $t\vee(p(z)-\mu)\le p(\tilde{z})< s\le p(z)$ and $|\alpha(s)-u(\tilde{z})|> a-\delta$.  Also $u(\tilde{z})$ lies on the completed graph and so there is a time $r$ with $t\vee(p(z)-\mu)\le r < s\le p(z)$ such that $|\alpha(s)-\alpha(r)|> a-\delta$. Indeed, if $p(\tilde{z})=t\vee(p(z)-\mu)$, then we can choose $r=p(\tilde{z})$ and, given that $t\vee(p(z)-\mu) \notin \operatorname{Disc}(\alpha)$, this gives $r=p(\tilde{z})$. Otherwise, either $|\alpha(s)-\alpha(p(\tilde z))|> a-\delta$ or $|\alpha(s)-\alpha(p(\tilde z)-)|> a-\delta$ and the existence of such $r$ is guaranteed due to $\alpha$ being càdlàg. The desired contradiction follows, as in case (2), from $|\alpha(s)-u(\tilde{z})|> a-\delta$ for all $\delta>0$.
   (4) if $p(z) \in \operatorname{Disc}(\alpha)$ and $p(\tilde{z}) \notin \operatorname{Disc}(\alpha)$, a contradiction follows in a similar way to (3).
\end{proof}

\begin{lem} \label{lem:prerequisite_2_suitable_sequence_of_large_increments_stopping_time_defi}
    The set $\{ a>0  :  \Pro^0\big( a \in V_{t,\mu}(H^0)\big)>0\}$
    is at most countable for every $t\ge 0$ and $\mu>0$.
\end{lem}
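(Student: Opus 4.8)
The plan is to combine the pathwise control from Lemma~\ref{lem:prerequisite_suitable_sequence_of_large_increments_stopping_time_defi} with a probabilistic counting argument in the spirit of the classical proof that the discontinuity set of a càdlàg process is countable. Fix $t\ge 0$ and $\mu>0$, and abbreviate $\varsigma_a:=\varsigma_{a,t,\mu}$ and $\varsigma_{a^-}:=\varsigma_{a^-,t,\mu}$. Since $H^0$ induces a measurable map $\Omega\to(\D_{\R^d}[0,\infty),\dM)$, Lemma~\ref{lem:prerequisite_suitable_sequence_of_large_increments_stopping_time_defi}(iii) makes $\omega\mapsto\varsigma_a(H^0)$ and $\omega\mapsto\varsigma_{a^-}(H^0)$ random variables, so all the events appearing below are measurable. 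I would first split the target set according to finiteness of $\varsigma_a$:
\[
\{a>0:\Pro^0(a\in V_{t,\mu}(H^0))>0\}\;=\;G_{<\infty}\cup G_\infty ,
\]
with $G_{<\infty}:=\{a>0:\Pro^0(a\in V_{t,\mu}(H^0),\ \varsigma_a(H^0)<\infty)>0\}$ and $G_\infty:=\{a>0:\Pro^0(a\in V_{t,\mu}(H^0),\ \varsigma_a(H^0)=\infty)>0\}$.

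The set $G_\infty$ is handled by disjointness. Using the monotonicity of $a\mapsto\varsigma_a(\alpha)$ from Lemma~\ref{lem:prerequisite_suitable_sequence_of_large_increments_stopping_time_defi}(i), the events $\{a\in V_{t,\mu}(H^0),\ \varsigma_a(H^0)=\infty\}=\{\varsigma_{a^-}(H^0)<\infty=\varsigma_a(H^0)\}$ are pairwise disjoint as $a$ ranges over $(0,\infty)$: if $\varsigma_{a_1}(\omega)=\infty$ and $a_1<a_2$, then $\varsigma_{a_2^-}(\omega)\ge\varsigma_{a_1}(\omega)=\infty$, so $\omega$ cannot belong to the event at $a_2$. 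A pairwise disjoint family of events can contain at most countably many of positive probability, so $G_\infty$ is at most countable.

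For $G_{<\infty}$ I would witness a finite jump with rationals: writing $E^{\delta,T}_a:=\{\varsigma_a(H^0)-\varsigma_{a^-}(H^0)>\delta,\ \varsigma_a(H^0)\le T\}$, one has $\{a\in V_{t,\mu}(H^0),\ \varsigma_a(H^0)<\infty\}=\bigcup_{\delta,T\in\Q_{>0}}E^{\delta,T}_a$ (an increasing union), so $G_{<\infty}\subseteq\bigcup_{\delta,T,\eta\in\Q_{>0}}G_{\delta,T,\eta}$ with $G_{\delta,T,\eta}:=\{a>0:\Pro^0(E^{\delta,T}_a)>\eta\}$. The crux is that each $G_{\delta,T,\eta}$ is \emph{finite}, of cardinality at most $T/(\delta\eta)$. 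To see this, take distinct $a_1,\dots,a_M\in G_{\delta,T,\eta}$ and fix $\omega$; for every $m$ with $\omega\in E^{\delta,T}_{a_m}$ the interval $\big(\varsigma_{a_m^-}(H^0(\omega)),\varsigma_{a_m}(H^0(\omega))\big)$ is a non-empty open subinterval of $[0,T]$ of length exceeding $\delta$, and by monotonicity in $a$ these intervals are pairwise disjoint, so at most $T/\delta$ of them occur; thus $\sum_{m=1}^M\ind_{E^{\delta,T}_{a_m}}\le T/\delta$ pathwise. Taking $\Pro^0$-expectations and using $\Pro^0(E^{\delta,T}_{a_m})>\eta$ gives $M\eta<T/\delta$, whence $M<T/(\delta\eta)$. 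Therefore $G_{<\infty}$ is a countable union of finite sets, and together with $G_\infty$ this shows $\{a>0:\Pro^0(a\in V_{t,\mu}(H^0))>0\}$ is at most countable.

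The only genuinely delicate point I expect is the $\varsigma_a=\infty$ regime: the "disjoint jump intervals inside $[0,T]$" bound, which makes the counting step work, breaks down exactly there, and this is what forces the finite/infinite case split above. Everything else---the reduction to rational thresholds, the measurability, and the expectation computation---is a routine adaptation of the standard countability argument, and the needed pathwise continuity and Borel measurability of the maps $\varsigma_{a,t,\mu}$ and $\varsigma_{a^-,t,\mu}$ are exactly what Lemma~\ref{lem:prerequisite_suitable_sequence_of_large_increments_stopping_time_defi} supplies.
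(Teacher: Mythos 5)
Your argument is correct, and it takes a different (more self-contained) route than the paper. The paper's proof is a one-liner: it observes that $a\mapsto\varsigma_{a,t,\mu}(H^0)$ is a càdlàg (non-decreasing, right-continuous) process in the variable $a$ by Lemma~\ref{lem:prerequisite_suitable_sequence_of_large_increments_stopping_time_defi}(i), identifies $\{a>0:\Pro^0(a\in V_{t,\mu}(H^0))>0\}$ with $\operatorname{Disc}_{\Pro^0}(a\mapsto\varsigma_{a,t,\mu}(H^0))$, and cites the classical fact (essentially \cite[Lem.~VI.3.12]{shiryaev}) that the set of fixed times at which a càdlàg process has positive probability of jumping is at most countable. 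What you have done instead is unwind and reprove that classical fact directly in this specific setting, via a disjoint-interval counting bound, adding a finite/infinite case split because $\varsigma_{a,t,\mu}$ is $[0,\infty]$-valued rather than real-valued. That case split is a genuine point of care --- the standard statement is usually phrased for $\R^d$-valued processes, and the paper's terse citation glosses over the extension to extended-real values (which, to be fair, is harmless since $[0,\infty]$ is a compact Polish space, or equivalently one can truncate at $T$). Your route is longer but entirely elementary and makes the $\varsigma_a=\infty$ regime explicit; the paper's route is shorter and relies on the reader recognizing the cited fact applies to $[0,\infty]$-valued monotone processes. Both are valid.

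One small simplification worth noting: you could avoid the $G_\infty$ branch altogether by applying your $G_{<\infty}$ counting argument to the truncated process $a\mapsto\varsigma_{a,t,\mu}(H^0)\wedge T$ for each $T\in\Q_{>0}$, since a left jump of the untruncated process (finite or infinite on the right) of size exceeding $\delta$ below level $T$ yields a left jump of size at least $\delta$ for the truncation; the union over rational $T$ then recovers the full set. This collapses the two cases into one and removes the disjointness argument for $G_\infty$.
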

\begin{proof}
    The process $a\mapsto \varsigma_{a,t,\mu}(H^0)$ is càdlàg by (i) of Lemma \ref{lem:prerequisite_suitable_sequence_of_large_increments_stopping_time_defi}. Note that $\{ a>0  :  \Pro^0\big( a \in V_{t,\mu}(H^0)\big)>0\}= \operatorname{Disc}_{\Pro^0}(a\mapsto \varsigma_{a,t,\mu}(H^0))$ and it is well-known that the latter set is at most countable.
\end{proof}

\begin{lem} \label{lem:prerequisite_final_suitable_sequence_of_large_increments_stopping_time_defi}
    Let $\Theta$ be a countable set with $|\Theta|=\operatorname{sup}_{x,y \in \Theta}|x-y| >0$. Then, the set 
    \begin{align}
        V^{\Theta} \; := \; \biggl\{ a>0  :  \Pro^0\Big( a \in \bigcup_{\nu\in \Theta} V_{\nu,\, |\Theta|}(H^0)\Big)>0 \biggr\} \label{eq:defi_set_V_Theta}
    \end{align} 
    is at most countable.
\end{lem}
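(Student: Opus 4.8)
The plan is to reduce the statement directly to Lemma \ref{lem:prerequisite_2_suitable_sequence_of_large_increments_stopping_time_defi} via a countable union argument. First I would observe that, for a fixed $a>0$, the event $\{a \in \bigcup_{\nu \in \Theta} V_{\nu,|\Theta|}(H^0)\}$ is the countable union $\bigcup_{\nu \in \Theta}\{a \in V_{\nu,|\Theta|}(H^0)\}$, where each $\{a \in V_{\nu,|\Theta|}(H^0)\}$ is indeed an event (measurability of $\alpha \mapsto \varsigma_{a,t,\mu}(\alpha)$ and $\alpha \mapsto \varsigma_{a^-,t,\mu}(\alpha)$ is exactly Lemma \ref{lem:prerequisite_suitable_sequence_of_large_increments_stopping_time_defi}(iii)). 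By countable subadditivity of $\Pro^0$, if $\Pro^0\big(a \in \bigcup_{\nu \in \Theta} V_{\nu,|\Theta|}(H^0)\big)>0$, then there must exist at least one $\nu \in \Theta$ with $\Pro^0\big(a \in V_{\nu,|\Theta|}(H^0)\big)>0$.

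Consequently, one obtains the inclusion
$$ V^{\Theta} \; \subseteq \; \bigcup_{\nu \in \Theta} \Big\{ a>0 \; : \; \Pro^0\big( a \in V_{\nu,\,|\Theta|}(H^0)\big)>0 \Big\}. $$
By Lemma \ref{lem:prerequisite_2_suitable_sequence_of_large_increments_stopping_time_defi}, applied with $t=\nu$ and $\mu=|\Theta|>0$, each set on the right-hand side is at most countable. Since $\Theta$ is countable by assumption, the right-hand side is a countable union of at most countable sets, hence at most countable, and therefore so is $V^{\Theta}$.

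There is no real obstacle here: the only point requiring minor care is checking that $\{a \in V_{\nu,|\Theta|}(H^0)\}$ is genuinely measurable so that subadditivity applies, which follows from part (iii) of Lemma \ref{lem:prerequisite_suitable_sequence_of_large_increments_stopping_time_defi}; everything else is the standard fact that a countable union of countable sets is countable.
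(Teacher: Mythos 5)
Your proposal is correct and is precisely the argument the paper compresses into "follows immediately from Lemma \ref{lem:prerequisite_2_suitable_sequence_of_large_increments_stopping_time_defi}": a countable union over $\nu\in\Theta$, countable subadditivity of $\Pro^0$, and the countability of each $\{a>0: \Pro^0(a\in V_{\nu,|\Theta|}(H^0))>0\}$ with $t=\nu$, $\mu=|\Theta|>0$. Your extra remark on measurability via Lemma \ref{lem:prerequisite_suitable_sequence_of_large_increments_stopping_time_defi}(iii) is a harmless and welcome addition.
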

\begin{proof}
    Follows immediately from Lemma \ref{lem:prerequisite_2_suitable_sequence_of_large_increments_stopping_time_defi}.
\end{proof}

\begin{lem} \label{lem:joint_convergence_from_coinciding_jump_times}
    Let $\{Z^n\}$, $\{\tilde{Z}^n\}$, $\{H^n\}$ and $\{X^n\}$ be stochastic processes such that $(H^n,X^n,Z^n,\tilde{Z}^n)\Rightarrow (H,X,Z,\tilde{Z})$ on $(\D_{\R^d}[0,\infty), \dM)\times (\D_{\R^d}[0,\infty), \dJ)^3$. If, for every $n\ge 1$, it holds $|\Delta Z^n_s| \vee |\Delta \tilde{Z}^n_s|\le 2|H^n|^*_s|\Delta X^n_s|$ almost surely for all $s>0$. Then, $X^n$, $Z^n$ and $\tilde{Z}^n$ converge together weakly on the strong $J_1$ Skorokhod space, that is
    $$ (X^n,Z^n,\tilde{Z}^n)\Rightarrow (X,Z,\tilde{Z}) \qquad \text{on} \quad (\D_{\R^{3d}}[0,\infty), \dJ). $$
\end{lem}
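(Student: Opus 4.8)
The plan is to reduce, via Skorokhod's representation theorem, to a purely deterministic statement about the $J_1$ topology, and then to verify that statement through the classical $J_1$ relative compactness criterion together with the jump‑domination hypothesis.

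\emph{Reduction.} All of $(\D_{\R^d}[0,\infty),\dM)$, $(\D_{\R^d}[0,\infty),\dJ)$ and $(\D_{\R^{3d}}[0,\infty),\dJ)$ are Polish, so the product space carrying the hypothesised weak convergence is Polish; by Skorokhod's representation theorem \cite{billingsley} we may therefore pass to a common probability space and assume, without changing any law, that $(H^n,X^n,Z^n,\tilde Z^n)\to(H,X,Z,\tilde Z)$ holds $\Pro$‑almost surely in $(\D_{\R^d}[0,\infty),\dM)\times(\D_{\R^d}[0,\infty),\dJ)^3$. Since the Borel $\sigma$‑algebra of $(\D_{\R^{3d}}[0,\infty),\dJ)$ is generated by the coordinate time‑projections over a countable dense set of times, it coincides with the product Borel $\sigma$‑algebra of the three $J_1$ factors; hence it suffices to show that, on the almost sure event, $(X^n,Z^n,\tilde Z^n)\to(X,Z,\tilde Z)$ in $(\D_{\R^{3d}}[0,\infty),\dJ)$, and almost sure convergence in this Polish space then upgrades to the asserted weak convergence. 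Fixing a path and recording that the pathwise bound reads $|\Delta Z^n_s|\vee|\Delta\tilde Z^n_s|\le C_n|\Delta X^n_s|$ with $C_n:=2|H^n|^*_T$ an (a.s.) bounded sequence — because $|H^n|^*_T\to|H|^*_T$ for every $T$ outside a fixed countable set, by $M_1$‑continuity of the running supremum — and that in particular $\operatorname{Disc}(Z^n)\cup\operatorname{Disc}(\tilde Z^n)\subseteq\operatorname{Disc}(X^n)$, we are left with the deterministic claim: \emph{if $x_n\to x$, $z_n\to z$, $\tilde z_n\to\tilde z$ in $\dJ$ on $[0,\infty)$ with $\operatorname{Disc}(z_n)\cup\operatorname{Disc}(\tilde z_n)\subseteq\operatorname{Disc}(x_n)$ and $|\Delta z_n|\vee|\Delta\tilde z_n|\le C_n|\Delta x_n|$ pointwise for a bounded sequence $C_n$, then $(x_n,z_n,\tilde z_n)\to(x,z,\tilde z)$ in $\dJ$ on $[0,\infty)$.}

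\emph{Deterministic step.} Fix $T$ a common continuity point of $x,z,\tilde z$ and work on $[0,T]$. By the classical $J_1$ relative compactness criterion \cite{billingsley}, \cite[Ch.~12]{whitt}, and since uniform boundedness on $[0,T]$ is inherited from the three marginals, it is enough to prove $\lim_{\delta\downarrow0}\limsup_{n\to\infty}w''\big((x_n,z_n,\tilde z_n),\delta\big)=0$, where $w''$ is the usual $J_1$ oscillation modulus for $\R^{3d}$‑valued paths. Suppose not: then there are $3\gamma>0$, $\delta_k\downarrow0$, $n_k\to\infty$ and times $t_1^k\le t^k\le t_2^k$ in $[0,T]$ with $t_2^k-t_1^k\le\delta_k$ — hence all converging to a common $s^*$ after passing to a subsequence — such that both the ``up'' increment of $v:=(x,z,\tilde z)$ over $[t_1^k,t^k]$ and the ``down'' increment over $[t^k,t_2^k]$ have norm $\ge3\gamma$. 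A diagonal argument gives $w''(x_{n_k},\delta_k),w''(z_{n_k},\delta_k),w''(\tilde z_{n_k},\delta_k)\to0$, so each single coordinate can be responsible for the large increment on at most one of the two subwindows. If the same coordinate is responsible for both we contradict its own modulus at once. Otherwise two distinct coordinates are responsible; say (symmetrically) that $z_{n_k}$ carries an increment $\ge3\gamma$ over $[t^k,t_2^k]$. Because $w''(z_{n_k},\delta_k)\to0$, a large increment over a window shrinking to $s^*$ must concentrate at a single jump, so there is $\sigma_k\in(t^k,t_2^k]$ with $|\Delta z_{n_k}(\sigma_k)|\ge2\gamma$ for $k$ large; by the jump‑inclusion hypothesis $\sigma_k\in\operatorname{Disc}(x_{n_k})$ and $|\Delta x_{n_k}(\sigma_k)|\ge 2\gamma/\sup_nC_n=:\gamma_0>0$. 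The coordinate responsible for the first subwindow is either $x$ — which (again by concentration of $J_1$ increments) jumps by $\ge2\gamma\ge\gamma_0$ at some $\rho_k\in[t_1^k,t^k]$ — or one of $z,\tilde z$, which by the same reasoning again forces $|\Delta x_{n_k}(\rho_k)|\ge\gamma_0$ for some $\rho_k\in[t_1^k,t^k]$. In every case $\rho_k<\sigma_k$ (left endpoint open on the right side), both converging to $s^*$, and $x_{n_k}$ carries jumps of size $\ge\gamma_0$ at both. Composing with the time change $\lambda_{n_k}$ witnessing $x_{n_k}\to x$ in $J_1$ yields càdlàg functions $g_k:=x_{n_k}\circ\lambda_{n_k}\to x$ uniformly on $[0,T]$ with jumps of size $\ge\gamma_0$ at $\lambda_{n_k}^{-1}(\rho_k)<\lambda_{n_k}^{-1}(\sigma_k)$, both converging to $s^*$. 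This is impossible: for $u$ ranging over points $\ne s^*$ near $s^*$ one has $\Delta g_k(u)\to\Delta x(u)\to0$ (jumps of the càdlàg limit $x$ clustering at a point must vanish), and for $u$ equal to $s^*$ at most one of the two jump locations can coincide with $s^*$; so one of the two jump sizes of $g_k$ tends to $0$, contradicting $\gamma_0>0$. Hence $\lim_{\delta\downarrow0}\limsup_n w''((x_n,z_n,\tilde z_n),\delta)=0$.

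\emph{Conclusion and obstacle.} Relative compactness of $\{(x_n,z_n,\tilde z_n)\}_n$ in $(\D_{\R^{3d}}[0,T],\dJ)$ follows; since strong $J_1$ convergence forces coordinatewise $J_1$ convergence, every $\dJ$‑convergent subsequence has limit $(x,z,\tilde z)$, so the whole sequence converges to $(x,z,\tilde z)$ in $\dJ$ on $[0,T]$, hence on $[0,\infty)$ by letting $T$ run over a dense set of common continuity points; undoing the Skorokhod representation gives the stated weak convergence. The main obstacle is the deterministic step: one must carefully convert ``a large increment of $z_n$ (or $\tilde z_n$) inside a vanishing window'' into ``a large jump of $z_n$, hence — via $\operatorname{Disc}(z_n)\subseteq\operatorname{Disc}(x_n)$ and the boundedness of $C_n$ — a comparably large jump of $x_n$'', and then exploit the incompatibility of two non‑vanishing jumps of $x_n$ clustering at one point with the convergence $x_n\to x$ in $J_1$. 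These are standard facts about the $J_1$ modulus $w''$ and about $J_1$ limits, but it is precisely their interplay with the jump‑domination hypothesis — which pins the jump times of the integral‑type processes to, and below the size of, those of $X^n$ — that makes the joint (strong) $J_1$ convergence go through.
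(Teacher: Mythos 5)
Your proof is correct, but it takes a genuinely different route from the paper's. The paper sidesteps Skorokhod representation by applying the generalized continuous mapping theorem directly to the map $g:(h,x,z,\tilde z)\mapsto(x,z,\tilde z)$ restricted to the Borel set $\Lambda_\alpha$ of paths obeying the jump‑domination bound, and then proves the crucial sequential continuity by an \emph{explicit common time‑change construction}: it first shows that at any common discontinuity $s$ of the limit, the individual $J_1$ time changes must eventually agree, $\lambda^z_n(s)=\lambda^x_n(s)$ (because otherwise $x_n$ would have two well‑separated jumps of non‑vanishing size near $s$, contradicting $x_n\to x$ in $J_1$), and then verifies $|z_n\circ\lambda^x_n-z|^*_T\to0$ by a case analysis on whether a hypothetical bad time accumulates at a discontinuity of $z$ or not. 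You instead reduce to a pathwise statement via Skorokhod representation, and then establish \emph{relative compactness of the triple in the strong $J_1$ space} through the classical $w''$ modulus: you run a contradiction through the pigeonhole on the $\R^{3d}$ coordinates, show that a large increment inside a vanishing window with small individual $w''$ must concentrate at a single jump, transport that jump to $X^n$ via jump‑domination, and deduce two non‑vanishing jumps of $x_{n_k}$ clustering at one point, contradicting $J_1$ convergence of $x_n$; uniqueness of subsequential limits (coordinatewise strong $J_1$ implies weak $J_1$) then closes the argument. In effect, the paper builds the time change that witnesses the joint $J_1$ convergence, whereas you establish pre‑compactness and identify the unique cluster point. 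Both routes hinge on the same structural obstruction — that $x_n\to x$ in $J_1$ forbids two well‑separated non‑vanishing jumps of $x_n$ accumulating at a single time — so the mathematical content is cousins, but the packaging (tightness/identification versus direct sequential continuity) is genuinely different. Two small presentational points worth tidying if you keep your version: $v$ in your deterministic step should refer to the $n_k$‑indexed triple $(x_{n_k},z_{n_k},\tilde z_{n_k})$ rather than the limit $(x,z,\tilde z)$; and a line justifying that the Borel set $\Lambda_\alpha$ makes the jump‑domination a law property (so that it transfers to the Skorokhod‑represented copy) would make the reduction airtight — the paper's continuous‑mapping formulation packages that bookkeeping for free.
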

\begin{proof}
    Denote $\Lambda_\alpha:=\{(h,x,z,\tilde{z}) : |\Delta z_s|\vee|\Delta \tilde{z}_s|\le 2|h|^*_s|\Delta x_s| \text{ for all } s>0 \} \subseteq (\D_{\R^d}[0,\infty))^4$ and write $\rho(\lambda_1,...,\lambda_4),(\beta_1,...,\beta_4)):= \dM(\lambda_1,\beta_1) \vee \operatorname{max}\{\dJ(\lambda_i,\beta_i):i=2,...,4\}$. If we can show that the map 
    $$ g:\; (\Lambda_\alpha, \rho) \to (\D_{\R^{3d}}[0,\infty),\dJ), \;\;  (h,x,z,\tilde{z}) \mapsto (x,z,\tilde{z})$$
    is sequentially continuous in the sense $g(y_n)\to g(y)$ whenever $y_n \in \Lambda_\alpha$ and $y_n\to y \in \overline{\Lambda_\alpha}$, then the result of the the lemma follows from the generalised continuous mapping theorem \cite[Thm.~1.11.1]{vandervaart}. To this aim, let $\{(h_n,x_n,z_n, \tilde{z}_n)\} \subseteq \Lambda_\alpha$ converging to some $(h,x,z, \tilde{z}) \in \overline{\Lambda_\alpha}$ with respect to $\rho$ as $n \to \infty$. Fix $T>0$ a continuity point of $(z,\tilde z, h, x)$ and recall that $|h_n|^*_T$ is uniformly bounded by some $K>1$ as a result of the sequence of $h_n$ being relatively compact in $M_1$. Denote by $\lambda^{z}_n,\lambda^{\tilde z}_n,\lambda^{x}_n:[0,T] \to [0,T]$ the increasing homeomorphisms such that $|x_n\circ \lambda^x_n -x|^*_T \vee |\lambda^x_n-\id|^*_T \to 0$, and analogously for $z_n, \tilde{z}_n$. 

    Now, for any $s \in \operatorname{Disc}(x)\cap \operatorname{Disc}(z)= \operatorname{Disc}(z)$, there is $N\ge 1$ such that $\lambda^{z}_n(s)=\lambda^x_n(s)$ for all $n\ge N$. Indeed, otherwise there exist $\varepsilon>0$, $s \in \operatorname{Disc}(x)\cap \operatorname{Disc}(z)$ with $|\Delta x(s)|\vee |\Delta z(s)|>\varepsilon$, and a subsequence satisfying $\lambda_n^z(s)\neq \lambda_n^x(s)$ for all $n\ge 1$. Recall that $\lambda^x_n(s), \lambda^z_n(s)\to s$ as $n\to \infty$. It is straightforward to show $|\Delta x_n(\lambda_n^x(s))| \to |\Delta x(s)|>\varepsilon$ and $|\Delta z_n(\lambda_n^z(s))| \to |\Delta z(s)|>\varepsilon$. Since $|\Delta z_n(\lambda^z_n(s))|\le 2|h_n|^*_{\lambda^z_n(s)}|\Delta x_n(\lambda^z_n(s))|\le 2K |\Delta x_n(\lambda^z_n(s))|$, we deduce $|\Delta x_n(\lambda^z_n(s))|> \varepsilon/K$ for large enough $n$. However, this means that on any interval $(s-\theta,s+\theta)$, $\theta>0$, the $x_n$ jump twice with jump sizes at least $\varepsilon/K$ for large enough $n$ (precisely at $\lambda^z_n(s), \lambda^x_n(s))$, and therefore cannot converge in $J_1$, which yields a contradiction. In full analogy, we obtain that for any $s \in \operatorname{Disc}(x)\cap \operatorname{Disc}(\tilde z)=\operatorname{Disc}(\tilde z)$ there is $N\ge 1$ such that $\lambda^{\tilde z}_n(s)=\lambda^x_n(s)$ for all $n\ge N$.

    In a next step, observe that $|z_n\circ \lambda^x_n - z|^*_T \to 0$ is equivalent to $|z_n\circ \lambda^z_n - z_n\circ \lambda^x_n |^*_T\to 0$. Assume towards a contradiction that $|z_n\circ \lambda^z_n - z_n\circ \lambda^x_n |^*_T \not \to 0$. Then, there exist $\varepsilon>0$, a subsequence $\{n_k\}$ and times $\{s_{k}\} \subseteq [0,T]$ with $|z_{n_k}(\lambda^z_{n_k}(s_k)) - z_{n_k}(\lambda^x_{n_k}(s_k)) |>\varepsilon$ for all $k\ge 1$. Extract a further subsequence of times, which we will also denote by $\{s_k\}$ such that $s_k \to s \in [0,T]$. Assume that $\{s_k\}$ is increasing (the case $\{s_k\}$ decreasing can be treated similarly). Then, we obtain
    \begin{align}
        \varepsilon \, &< \, |z_{n_k}(\lambda^z_{n_k}(s_k)) - z_{n_k}(\lambda^x_{n_k}(s_k)) | \notag \\ 
        &\le \;  |z_{n_k}\circ \lambda^z_{n_k}-z|^*_T  \; + \; |z(s_k)- z\circ (\lambda^z_{n_k})^{-1}(\lambda^x_{n_k}(s_k))| \; + \; |z_{n_k}-z\circ (\lambda^z_{n_k})^{-1}|^*_T \label{eq:lem_aux_joint_convergence} 
    \end{align}
    for each $k\ge 1$, and, if $s \notin \operatorname{Disc}(z)$, \eqref{eq:lem_aux_joint_convergence} immediately leads to a contradiction due to $|(\lambda^z_{n})^{-1}-\id|^*_T\vee |\lambda^x_{n}-\id|^*_T \to 0$. On the other hand, if $s \in \operatorname{Disc}(z)$, then it is known from above that $\lambda^z_{n}(s)=\lambda^x_n(s)$ for large enough $n$ and thus, since both homeomorphisms are strictly increasing, $\lambda^z_{n_k}(s_k), \lambda^x_{n_k}(s_k) < \lambda^z_{n_k}(s)=\lambda^x_{n_k}(s)$ for large enough $k$. Consequently, $s>(\lambda^z_{n_k})^{-1}(\lambda^x_{n_k}(s_k))$ for large enough $k$ and at the same time we still have $(\lambda^z_{n_k})^{-1}(\lambda^x_{n_k}(s_k)) \to s$. Hence, we deduce a contradiction to \eqref{eq:lem_aux_joint_convergence} since $z(s_k) \to z(s-)$ and $z\circ (\lambda^z_{n_k})^{-1}(\lambda^x_{n_k}(s_k)) \to z(s-)$. A similar procedure shows $|\tilde z_n\circ \lambda^x_n-\tilde z|^*_T\to 0$. 

    Finally, this yields $|(x_n,z_n,\tilde z_n) \circ \lambda^x_n - (x,z,\tilde z)|^*_T \to 0$ and therefore the convergence $(z_n,\tilde z_n, x_n) \to (z,\tilde z, x)$ on $(\D_{\R^{3d}}[0,\infty), \dJ)$.
\end{proof}

\textbf{Acknowledgements.} I would like to thank Andreas Søjmark for introducing me to the broader research area years ago and for helpful discussions during the development of this work. Moreover, I am grateful to Giulia Livieri for her kind and valuable support. Part of this research was funded by PRIN 2022, project no. P20229CJRS – CUP E53D2301652000, and the Research Investment Funding of the Department of Statistics at the LSE.

\printbibliography

\end{document}